\theoremstyle{plain}
\newtheorem{thm}{Theorem}[section]
\newtheorem{lem}[thm]{Lemma}
\theoremstyle{definition}
\newtheorem{ass}{Assumption}[section]
\newtheorem{rmk}{Remark}[section]
\makeatletter\@addtoreset{equation}{section} \makeatother
\begin{document}		
\title{ A Class of Forward-Backward Stochastic Differential Equations Driven by L\'{e}vy Processes  and Application to LQ  Problems
 \thanks{Q. Meng was supported by the  Key Projects of Natural Science Foundation of Zhejiang Province (No. Z22A013952) and the National Natural Science Foundation of China ( No.12271158 and No. 11871121). Maoning Tang was supported by the Natural Science Foundation of Zhejiang Province (No. LY21A010001).}}

\date{}

\author[a]{Maozhong Xu}
\author[b]{Maoning Tang}
\author[b]{Qingxin Meng\footnote{Corresponding author.
		\authorcr
		\indent E-mail address: xumaozhong1130@163.com (M. Xu), tmorning@zjhu.edu.cn(M. Tang), mqx@zjhu.edu.cn (Q.Meng)}}

\affil[a]{\small{Department of Mathematics, Zhejiang Normal University, Jinhua 321004, PR China}}

\affil[b]{\small{Department of Mathematical Sciences, Huzhou University, Zhejiang 313000,PR  China}}

\maketitle

\begin{abstract}
In this paper, our primary focus lies in the thorough investigation of a specific category of nonlinear fully coupled forward-backward stochastic differential equations involving time delays and advancements with the incorporation of L\'{e}vy processes, which we shall abbreviate as FBSDELDAs. Drawing inspiration from diverse examples  of linear-quadratic (LQ) optimal control problems featuring delays and L\'{e}vy processes, we proceed to employ a set of domination-monotonicity conditions tailored to this class of FBSDELDAs. Through the application of the continuation method, we achieve the pivotal results of unique solvability and the derivation of a pair of estimates for the solutions of these FBSDELDAs. These findings, in turn, carry significant implications for a range of LQ problems. Specifically, they are relevant when stochastic Hamiltonian systems perfectly align with the FBSDELDAs that fulfill the domination-monotonicity conditions. Consequently, we are able to establish explicit expressions for the unique optimal controls by utilizing the solutions of the corresponding stochastic Hamiltonian systems.
\end{abstract}

\textbf{Keywords}: Forward-backward stochastic differential equation with delay; L\'{e}vy processes; Method of continuation; Domination-monotonicity condition; Stochastic linear-quadratic problem

\maketitle

\section{ Introduction }
Since the seminal work of Pardoux and Peng \cite{pardoux1990adapted} on backward stochastic differential equations (BSDEs), as well as the contributions of Antonelli \cite{antonelli1993backward} regarding coupled forward-backward stochastic differential equations (FBSDEs), these equations have garnered substantial attention. They have become an essential subject of study not only due to their classical structure but also their extensive applicability across various domains, including stochastic control, finance, and economics  \cite{agram2014infinite,el1997backward,sun2016maximum}. In 1993, Antonelli \cite{antonelli1993backward} introduced coupled FBSDEs and established solvability results for a limited time interval. However, he also presented a counterexample, illustrating that the same conclusion might not hold over an extended time interval when relying solely on Lipschitz conditions. To address this challenge, many scholars have introduced additional monotonicity conditions and employed the method of continuation. This method was first introduced by Hu and Peng \cite{hu1995solution} and subsequently expanded upon by Yong \cite{peng1999fully,yong1997finding}, and others. Moreover, various other conditions and research methodologies have been proposed \cite{delarue2002existence,ma1994solving,ma2015well,pardoux1999forward,zhang2010equivalence}.

For the research of stochastic differential equations (SDEs) with L\'{e}vy processes, one of the most improtant results was given by Nualart and Schoutens \cite{nualart2000chaotic}. In their paper, by constructing a set of pairwise strongly orthonormal martingales associated with L\'{e}vy processes called Teugels martingales, they gave a martingale representation theorem related to L\'{e}vy processes. Based on this, they proved in \cite{nualart2001backward} the existence and uniqueness of solution for BSDEs with L\'{e}vy processes, and then their results were extended to the BSDEs driven by Teugels martingales and an independent multi-dimensional Brownian motion by Bahlali, Eddahbi and Essaky \cite{bahlali2003bsde}.  Subsequently, many scholars conducted more in-depth studies on BSDE driven by Teugels martingale and obtained abundant research results, seeing the reference \cite{hu2009stochastic,ren2010generalized,zhou2011solutions} and so on.  Later, there emerged a great deal of research on the stochastic control system driven by Teugels martingales and an independent Brownian motion, including the forward system, the backward system and the forward-backward system. For these results, we can refer to Meng and Tang \cite{meng2009necessary}, Tang and Zhang \cite{tang2012optimal}, Zhang et al. \cite{zhang2014stochastic} and so on.

However, in the natural and social phenomena, there exist a large number of processes whose development not only depends on their present state, but also their previous situation. Therefore, it is necessary to to study the stochastic control system with delay. In 2000, {\O}ksendal and Sulem \cite{oksendal2000maximum} obtained the stochastic maximum principle for this type of system. To our knowledge, the adjoint equation for the delayed system is also a new type of BSDE which is called anticipated BSDE. It was introduced by Peng and Yang \cite{peng2009anticipated} and they gave the proof of the unique solvability result of solution for such BSDE. Subsequently, Chen and Wu conducted a lot of research on this basis. In 2010, they studied the time-delayed SDEs in \cite{chen2010maximum}, where they obtained the maximum principle for this problem by virtue of the duality method and the anticipated BSDEs and the related application was also presented. In the following year, Chen and Wu \cite{chen2011type} continued to study a type of general FBSDEs with time-delayed SDEs as the forward equations and time-advanced BSDEs as the backward equations. Besides, \cite{chen2012dynamic} and \cite{chen2012delayed} are also their research results on the time-delayed system. For the follow-up research developments, we can refer to \cite{huang2012maximum,li2017linear,meng2015optimal,meng2021global,meng2023general,yang2022fbsdes,yu2012stochastic} and so on.

As fa as we know, the Hamiltonian systems for the stochastic control problem with L\'{e}vy processes involving time delays or advancements are all described by coupled FBSDELDAs. However, to the best of our knowledge, there is very little research on this type of FBSDEs. Therefore, in this paper, we consider the following FBSDELDA:
\begin{equation}\label{eq:1.1}
	\left\{\begin{aligned}
		dx(t)  &=b(t,\theta(t) ,\theta_{-}(t), y_{+}(t), z_{+}(t), k_{+}(t)) dt+\sigma(t, \theta(t), \theta_{-}(t), y_{+}(t), z_{+}(t), k_{+}(t)) d W(t)\\
		&\quad+\sum_{i=1}^{\infty }g^{(i)}  (t, \theta(t-), \theta_{-}(t-), y_{+}(t-), z_{+}(t), k_{+}(t))dH^{(i)}(t), \quad t \in[0, T],  \\
		dy(t)&=f(t, \theta(t), x_{-}(t), \theta_{+}(t)) dt+z(t)d W(t)+\sum_{i=1}^{\infty }k^{(i)}(t)dH^{(i)}(t), \quad t \in[0, T],\\
		x(t) &=\lambda(t),\quad y(t)=\mu(t),\quad z(t)=\rho(t),\quad k(t)=\varsigma(t),\quad t \in[-\delta , 0],\\
		y(T)&=\Phi (x(T)),\\
		x(t)&=y(t)=z(t)=k(t)=0,\quad t\in(T,T+\delta],
	\end{aligned}\right.
\end{equation}
where we denote $\theta(\cdot) =(x(\cdot)^\top ,y(\cdot)^\top ,z(\cdot)^\top ,k(\cdot)^\top )^\top$ with $k(\cdot ):=(k^{(1)}(\cdot)^\top,k^{(2)}(\cdot)^\top,\cdots )^\top$, $\theta_{-}(\cdot)=(x_{-}(\cdot)^\top ,y_{-}(\cdot)^\top ,z_{-}(\cdot)^\top ,\\k_{-}(\cdot)^\top)^\top=(x(\cdot-\delta)^\top,y(\cdot-\delta)^\top,z(\cdot-\delta)^\top,k(\cdot-\delta)^\top)^\top$, $\theta_{+}(\cdot) =(x_{+}(\cdot)^\top ,y_{+}(\cdot)^\top ,z_{+}(\cdot)^\top ,k_{+}(\cdot)^\top )^\top=(\mathbb{E} ^{\mathcal{F}_t}[x(\cdot+\delta)]^\top,\mathbb{E} ^{\mathcal{F}_t }[y(\cdot+\delta )]^\top,\mathbb{E} ^{\mathcal{F}_t }[z(\cdot+\delta )]^\top, \mathbb{E} ^{\mathcal{F}_t }[k(\cdot+\delta )]^\top)^\top$. Especially,  $\theta(\cdot-)=(x(\cdot-)^\top ,y(\cdot-)^\top ,z(\cdot)^\top ,k(\cdot)^\top )^\top$, $\theta_{-}(\cdot-)=(x_{-}(\cdot-)^\top ,y_{-}(\cdot-)^\top, \\z_{-}(\cdot)^\top ,k_{-}(\cdot)^\top )^\top=(x\big((\cdot-\delta)-\big)^\top,y\big((\cdot-\delta)-\big)^\top,z(\cdot-\delta)^\top,k(\cdot-\delta)^\top)^\top$, $y_{+}(\cdot-) =\mathbb{E} ^{\mathcal{F}_t }[y\big((\cdot+\delta )-\big)]$, where $\mathbb{E} ^{\mathcal{F}_t }[\cdot]=\mathbb{E}[\cdot|\mathcal{F}_t]$ and $\top$ denotes the transpose of matrices. Moreover, we denote $\Lambda(\cdot)=(\lambda(\cdot),\mu(\cdot),\rho(\cdot),\varsigma(\cdot))$. Let $\delta>0$ be a given constant and denote the time delay. Furthermore, we define $\mathcal{F}_t=\mathcal{F}_0$ for all $t\in [-\delta,0]$. $\left \{W_t:t\in [0 ,T ]\right \}$ is a d-dimensional standard Brownian motion. $\left \{H^{(i)}_t:t\in [0,T]\right \}_{i=1}^{\infty}$ are Teugels martingales associated with the L\'{e}vy process.
For the convenience of later use, we continue to denote
\begin{equation}\label{eq:1.2}
\Gamma (\cdot ):=(f(\cdot )^\top ,b(\cdot )^\top ,\sigma (\cdot )^\top ,g(\cdot )^\top )^\top \quad \textrm{with}\quad g(\cdot )^\top :=(g^{(1)}(\cdot )^\top,g^{(2)}(\cdot )^\top,\cdots )^\top .
\end{equation}
Then all of the coefficients of FBSDELDA \eqref{eq:1.1} are collected by $(\Lambda,\Phi ,\Gamma)$.

In 2014, Li and Wu \cite{li2014maximum}  initiated the investigation of anticipated recursive stochastic optimal control problems involving delays and L\'{e}vy processes. Their research focused on control systems described by anticipated FBSDE with delays and L\'{e}vy processes (AFBSDEDLs). In their pioneering work, they established unique solvability results for SDEs with delay and L\'{e}vy processes (SDEDLs) and anticipated BSDEs with L\'{e}vy processes (ABSDELs). These findings provided a solid foundation for similar results in the context of uncoupled AFBSDEDLs. Building upon their groundbreaking work, Li and Wu extended their research to address the Linear Quadratic (LQ) optimal control problem for systems characterized by delays and L\'{e}vy processes in a subsequent publication \cite{li2016stochastic}. This endeavor led to the solvability of stochastic Hamiltonian systems and the derivation of unique optimal control representations. It is important to note that the AFBSDEDLs they investigated in these earlier studies were uncoupled, and our research herein explores the considerably distinct fully coupled scenarios.
To prove the existence and uniqueness of solutions for fully coupled FBSDELDAs, we employ and further develop the method of continuation. Additionally, Li and Wu \cite {li2014maximum} demonstrated the continuous dependence property of solutions for ABSDELs. Therefore, our current study aims to build upon their work and delve into the continuous dependence theorem for both SDEDLs and fully coupled FBSDELDAs, as expounded in Lemma \ref{lem:2.2} and Theorem \ref{thm:3.1}.

 In 2022, in order to solve more general coupled FBSDEs which can  be applied to solve various stochastic LQ problems, Yu  \cite{yu2022forward}  has introduced various matrices, matrix-valued random variables and matrix-valued stochastic processes to present a domination-monotonicity framework and this kind of domination-monotonicity conditions are more accurate and general
 forms of the traditional monotonicity conditions which strengthen the Lipschitz condition and weaken the monotonicity
 conditions at the same time. Actually, this new framework can not only cover most of situations related to the method of continuation in the literature, but also contain many others beyond the literature. More importantly, this framework can precisely correspond to four types of LQ problems which has been demonstrated in detail in Section 4 of Yu \cite{yu2022forward}. In their paper, a unique solvability
result and a pair of estimates for coupled FBSDEs are obtained. Due to the wider applicability of this kind of framework, it has been applied to some well-known literature
\cite{li2018forward,tian2023mean,wei2021infinite,yu2022forward} and so on.

Recently,  a class of coupled FBSDEs involving time delays and time advancements on infinite horizon is studied by Yang and Yu \cite{yang2022fbsdes}, in which the unique solvability
of infinite horizon FBSDEs is obtained under a randomized Lipschitz condition and a randomized monotonicity condition. In comparison to Yang and Yu \cite{yang2022fbsdes}, we develop the domination-monotonicity conditions introduced by Yu \cite{yu2022forward} to the framework that addresses time delays and advancements associated with L\'{e}vy processes. This extension allowed us to tackle the unique solvability of FBSDELDAs and apply our approach to solve more general stochastic LQ problems that involve cost functionals with cross terms. Here it is worth mentioning that under some type of domination-monotonicity conditions,  Li, Wang and Wu \cite{li2018indefinite} studied  the uniqueness and existence of the solutions for a particular class of anticipated forward–backward stochastic differential delayed equations, where their domination-monotonicity conditions differ significantly from ours (see  Assumption \ref{ass:3.2}) and  in Remark \ref{rmk:3.1},  the corresponding difference has  been discussed in detail.

As an application of these findings, we shall re-examine stochastic LQ problems involving L\'{e}vy processes with time delays or advancements. LQ problems represent a quintessential category within the realm of stochastic optimal control problems, intensively studied by numerous scholars. When delving into the study of these LQ problems, it is imperative to engage with Hamiltonian systems, a type of linear FBSDELDAs. Leveraging the unique solvability results derived for FBSDELDAs, we obtain analogous outcomes for Hamiltonian systems in the context of LQ problems. It is noteworthy that, particularly in the case of forward LQ problems, we confront the complexity of cost functionals that include cross terms. It is also of significance to highlight that, in order to address these cross terms, we have introduced a pivotal lemma (refer to Lemma \ref{lem:4.1}) to establish the uniqueness of the optimal control.

The rest of this paper is organized as follows.  In Section 2, we introduce and establish essential notations for our analysis. Additionally, we present two key lemmas related to  SDEDLs and ABSDELs. These lemmas will prove invaluable for our subsequent analysis. In Section 3, we delve into the examination of FBSDELDA \eqref{eq:1.1}, subject to domination-monotonicity conditions. Our primary focus is on establishing the unique solvability of this equation. We also provide a pair of estimates, which are instrumental for our theoretical framework. These critical results are encapsulated in Theorem \ref{thm:3.1}. In Section 4, we build upon the findings from previous sections to address two distinct types of LQ  problems concerning systems involving Lévy processes and incorporating time delays or advancements. We successfully derive the explicit forms of unique optimal control strategies in these scenarios.

\section{Notations and Preliminaries }\label{sec:2}
Let $\mathbb{R}^n$ be the $n$-dimensional Euclidean space with the norm $|\cdot|$ and the inner product $\langle\cdot,\cdot\rangle$. Let $\mathbb{S}^n$ be the set of all symmetric matrices in $\mathbb{R}^{n\times n}$. Let $\mathbb{R}^{n\times m}$ be the collection of all $n\times m$ matrices with the norm $|A|=\sqrt{\textrm{tr}(AA^\top)}$, for $\forall A\in \mathbb{R}^{n\times m}$ and the inner product:
\begin{equation}
	\left\langle A,B\right\rangle = \textrm{tr}(AB^\top),\quad A,B\in \mathbb{R}^{n\times m}.\nonumber
\end{equation}

Let $T>0$ and $[0,T]$ denotes the finite time horizon. Let $(\Omega, \mathcal{F}, \mathbb{F}, \mathbb{P})$ be a complete filtered probability space
with a filtration $\mathbb{F}=\left\{\mathcal{F}_t : 0\le t\le T\right\}$  satisfying the usual conditions of right-continuity and $\mathbb{P}$- completeness. Besides, let the filtration $\mathbb{G}=\left\{\mathcal{G}_t : \mathcal{G}_t=\mathcal{F}_{t-\delta},0\le t\le T\right\}$.
Let $\left\{W_t:0\le t\le T\right\}$ be a d-dimensional standard Brownian motion with respect to $\mathbb{F}$ and  $\left\{S_t:0\le t\le T\right\}$ be a 1-dimensional real-valued c\`{a}dl\`{a}g trajectory called L\'{e}vy processes with stationary and independent increments, which is independent of $\left\{W_t:0\le t\le T\right\}$. It is well-known that $S_t$ has a characteristic function of the following form
\begin{equation}
E(e^{i \omega S_t})=\exp \left[i a \omega t-\frac{1}{2} \varrho^2 \omega^2 t+t \int_{\mathbb{R}}\left(e^{i \omega s}-1-i \omega \mathbf{1}_{\{|s|<1\}}\right) v(d x)\right],\nonumber
\end{equation}
where $a \in \mathbb{R}, \varrho>0$, and $v$ is a measure on $\mathbb{R}$ with $\int_{\mathbb{R}}\left(1 \wedge x^2\right) v(d x) \le \infty$. We will assume that the L\'{e}vy measure $v$ satisfies
\begin{equation}
\int_{\substack{(-\varepsilon, \varepsilon)^c}}\left(e^{k|x|}\right) v(d x) \le \infty,\nonumber
\end{equation}
for every $\varepsilon>0$ and some constant $k>0$.

We assume that
\begin{equation}
	\mathcal{F}_t=\sigma\left(S_s, s \leq t\right) \vee \sigma\left(W_s, s \leq t\right) \vee \mathcal{N},\nonumber
\end{equation}
where $\mathcal{N}$ denotes the totality of $P$-null sets.

We denote by $\left\{H^{(i)}_t:0\le t\le T\right\}_{i=1}^{\infty }$ the Teugels martingales associated with the L\'{e}vy process $\left\{S_t:0\le t\le T\right\}$. $H^{(i)}_t$ is given by
\begin{equation}\nonumber
	H^{(i)}_t=c_{i,i}Y^{(i)}_t+c_{i,i-1}Y^{(i-1)}_t+\cdots+c_{i,1}Y^{(1)}_t,
\end{equation}
where $Y^{(i)}_t=S^{(i)}_t-\mathbb{E}[S^{(i)}_t]$ for all $i\ge1$, $S^{(i)}_t$ are so-called power jump processes with $S^{(1)}_t=S_t$, $S^{(i)}_t=\displaystyle\sum_{0\le s\le t}(\Delta S_s)^{i}$ for $i\ge2$ and the coefficients $c_{i,j}$ correspond to the orthonormalization of polynomials $1,x,x^2,\cdots$ with respect to the measure $\mu(dx)=x^2v(dx)+\sigma^2\delta_0(dx)$.
Furthermore,
it is well-known that the Teugels martingales $\left\{H^{(i)}_t\right\}_{i=1}^{\infty }$ are pairwise strongly orthogonal and their predictable quadratic variation
processes are given by
\begin{equation}
	\left \langle H^{(i)}_t, H^{(j)}_t\right \rangle =\delta _{ij}t,\nonumber
\end{equation}  where $$\delta _{ij}= \left\{\begin{aligned}
		&1 \quad i=j    \\
		&0 \quad i\neq j.
	\end{aligned}\right.$$
The reader can refer to \cite {nualart2000chaotic, nualart2001backward}  for more details about Teugels martingales.

Let $\mathbb H$ be a Hilbert space with norm $\|\cdot\|_\mathbb H$, then we introduce some notations as follows:

$\bullet$ $l^{2}$: the space of all real-valued sequences $x=(x_n)_{n\ge 1}$ satisfying
\begin{equation}
	\|x\|_{l^2}:=\Big (\sum_{i=1}^{\infty } x_i^2\Big )^{1/2}<\infty .\nonumber
\end{equation}

$\bullet$ $l^{2}(\mathbb H)$: the space of all $\mathbb H$-valued sequences $f=\left\{f^i\right\}_{i\ge 1}$ satisfying
\begin{equation}
	\|f\|_{l^2(\mathbb H)}:=\Big (\sum_{i=1}^{\infty } \|f^{i}\|_\mathbb H^2\Big )^{1/2}<\infty .\nonumber
\end{equation}

$\bullet$ $C(s,r;\mathbb H)$: the space of continuous functions form $[s,r]$ into $H$.

$\bullet$ $L^2(s,r;\mathbb H)$: the space of all $\mathbb{H}$-valued  Lebesgue measurable functions $\xi(\cdot)$ satisfying
\begin{equation}
	\|\xi(\cdot)\|_{L^2(s,r;\mathbb H)}:=\bigg[\int_{s}^{r}|\xi(t)|_{\mathbb{H}}^2dt\bigg]^{1/2}<\infty .\nonumber
\end{equation}

$\bullet$ $L^{2} _{\mathcal{F}_{T} }(\Omega ; \mathbb H)$: the space of all $\mathbb H$-valued and $\mathcal{F}_{T}$-measurable random variables $\xi$ satisfying

\begin{equation}
	\|\xi\|_{L^{2} _{\mathcal{F}_{T} }(\Omega ;\mathbb{H})} :=\Big[\mathbb{E}\|\xi \|_\mathbb H^{2} \Big]^{1/2}<\infty .\nonumber
\end{equation}

$\bullet$ $L^{\infty} _{\mathcal{F}_{T} }(\Omega ;\mathbb H)$: the space of all $\mathbb H$-valued
and $\mathcal{F}_{T}$-measurable essentially bounded variables.

$\bullet$ $L^{2} _{\mathbb{F}}(s,r;\mathbb H)$: the space of all $\mathbb{H}$-valued and $\mathbb{F}$-predictable processes $f(\cdot)$ satisfying
\begin{equation}
	\|f(\cdot)\|_{L^{2} _{\mathbb{F}}(0,T;\mathbb H)} :=\bigg [\mathbb{E}\bigg (\int_{s}^{r}\|f(t)\|_{\mathbb{H}}^{2}ds\bigg ) \bigg]^{1/2}<\infty.\nonumber
\end{equation}

$\bullet$ $M^{2} _{\mathbb{F}}(s,r;\mathbb H)$: the space of all $\mathbb{H}$-valued and $\mathbb{F}$-adapted processes $f(\cdot)$ satisfying
\begin{equation}
	\|f(\cdot)\|_{L^{2} _{\mathbb{F}}(0,T;\mathbb H)} :=\bigg [\mathbb{E}\bigg (\int_{s}^{r}\|f(t)\|_{\mathbb{H}}^{2}ds\bigg ) \bigg]^{1/2}<\infty.\nonumber
\end{equation}

$\bullet$ $L^{2} _{\mathbb{F}}(s,r;l^2(\mathbb H))$: the space of all $l^2(\mathbb H)$-valued and $\mathbb{F}$-predictable
processes $f(\cdot)=\left\{f^i(\cdot)\right\}_{i\ge 1}$ satisfying
\begin{equation}
	\|f(\cdot)\|_{L^{2} _{\mathbb{F}}(0,T;l^2(\mathbb H))}
 :=\bigg [\mathbb{E}\bigg (\int_{s}^{r}\sum_{i=1}^{\infty}\|f^i(t)\|_\mathbb H^{2}ds\bigg ) \bigg]^{1/2}<\infty.\nonumber
\end{equation}

$\bullet$ $L^{\infty} _{\mathbb{F}}(s,r;\mathbb H)$: the space of all $\mathbb H$-valued and $\mathbb{F}$-predictable essentially bounded processes.

$\bullet$ $L^{\infty} _{\mathbb{G}}(s,r;\mathbb H)$: the space of all $\mathbb H$-valued and $\mathbb{G}$-predictable essentially bounded processes.

$\bullet$ $\mathcal{S}^{2} _{\mathbb{F}}(s,r;\mathbb H)$: the space of all $\mathbb H$-valued
and $\mathbb{F}$-adapted c\`{a}dl\`{a}g processes $f(\cdot)$ satisfying
\begin{equation}
	\|f(\cdot )\|_{\mathcal{S}^{2} _{\mathbb{F}}(s,r;\mathbb H)} :=\bigg[\mathbb{E} \bigg(\sup _{t\in [s,r]}\|f(t)\|_{\mathbb{H}}^{2}\bigg )\bigg ]^{1/2}<\infty. \nonumber
\end{equation}

For the sake of simplicity of notation, we will also present some product space as follows:

$\bullet$ $N_{\mathbb{F} }^{2}(0 ,T;\mathbb{R}^{n(2+d)}\times l^2(\mathbb{R}^{m}) ):= \mathcal{S}^{2} _{\mathbb{F}}(0,T;\mathbb{R}^{n})\times \mathcal{S}^{2} _{\mathbb{F}}(0,T;\mathbb{R}^{n})\times M^{2}_\mathbb{F}(0,T;\mathbb{R}^{n\times d} )\times L^{2}_\mathbb{F}(0,T;l^2(\mathbb{R}^{n}) )$. For any $\theta (\cdot )=(x(\cdot )^\top,y(\cdot )^\top,z(\cdot )^\top,k(\cdot )^\top )^\top \in N_{\mathbb{F} }^{2}(0,T ;\mathbb{R}^{n(2+d)}\times l^2(\mathbb{R}^{n}) )$, its norm is given by
\begin{equation}
	\begin{aligned}
\|\theta(\cdot ) \|_{N_{\mathbb{F} }^{2}(0,T ;\mathbb{R}^{n(2+d)}\times l^2(\mathbb{R}^{n}) )}:= \left \{ \mathbb{E}\bigg [\displaystyle\sup_{t\in [0,T]}|x(t)|^{2}+\displaystyle\sup_{t\in [0,T]}|y(t)|^{2}+\int_{0}^{T}|z(t)|^{2
}dt+\int_{0}^{T}\|k(t)\|_{l^2(\mathbb{R}^{n})}^{2}dt \bigg ] \right \}^{1/2}.\nonumber
	\end{aligned}
\end{equation}

$\bullet$ $\mathcal{N}_{\mathbb{F} }^{2}(0 ,T ;\mathbb{R}^{n(2+d)}\times l^2(\mathbb{R}^{n}) ):= L^{2} _{\mathbb{F}}(0 ,T;\mathbb{R}^{n})\times L^{2} _{\mathbb{F}}( 0 ,T;\mathbb{R}^{n})\times M^{2}_\mathbb{F}(0 ,T;\mathbb{R}^{n\times d} )\times L^{2}_\mathbb{F}(0 ,T;l^2(\mathbb{R}^{n}) )$. For any $\rho (\cdot )=(\varphi (\cdot )^\top ,\psi (\cdot )^\top,\gamma  (\cdot )^\top,\beta  (\cdot )^\top)^\top \in \mathcal{N}_{\mathbb{F} }^{2}(0 ,T ;\mathbb{R}^{n(2+d)}\times l^2(\mathbb{R}^{n}) )$, its norm is given by
\begin{equation}
\begin{aligned}
\|\rho (\cdot )\|_{\mathcal{N}_{\mathbb{F} }^{2}(0 ,T ;\mathbb{R}^{n(2+d)}\times l^2(\mathbb{R}^{n}) )}:=\left \{ \mathbb{E}\bigg [\int_{0}^{T}|\varphi(t)|^{2}dt+\int_{0}^{T}|\psi(t)|^{2}dt+\int_{0}^{T}|\gamma (t)|^{2}dt+\int_{0}^{T}\|\beta (t)\|_{l^2(\mathbb{R}^{n})}^{2}dt  \bigg ]   \right \}^{1/2}.
\end{aligned}\nonumber
\end{equation}

$\bullet$ $\mathcal{Q}(-\delta,0;\mathbb{R}^{n(2+d)}\times l^2(\mathbb{R}^{n})):=C(-\delta,0;\mathbb{R}^n)\times C(-\delta,0;\mathbb{R}^n)\times L^2(-\delta,0;\mathbb{R}^{n\times d})\times L^2(-\delta,0;l^2(\mathbb{R}^n))$. For any $\Lambda(\cdot)=(\lambda(\cdot),\mu(\cdot),\rho(\cdot),\varsigma(\cdot)) \in \mathcal{Q}(-\delta,0;\mathbb{R}^{n(2+d)}\times l^2(\mathbb{R}^{n}))$, its norm is given by
\begin{equation}
	\begin{aligned}
		\|\Lambda(\cdot)\|_{\mathcal{Q}(-\delta,0;\mathbb{R}^{n(2+d)}\times l^2(\mathbb{R}^{n}))}:=\left\{\displaystyle\sup_{t\in[-\delta,0]}|\lambda(t)|^{2}+\displaystyle\sup_{t\in[-\delta,0]}|\mu(t)|^{2}+\int_{-\delta}^{0}|\rho(t)|^{2}dt+\int_{-\delta}^{0}\|\varsigma(t)\|_{l^2(\mathbb{R}^n)}^{2}dt\right\}^{1/2}.\nonumber
	\end{aligned}
\end{equation}

$\bullet$ $\mathcal{H}[-\delta,T]:=\mathcal{Q}(-\delta,0;\mathbb{R}^{n(2+d)}\times l^2(\mathbb{R}^{n}))\times L_{\mathcal{F}_T }^{2}(\Omega ;\mathbb{R}^n)\times \mathcal{N}_{\mathbb{F} }^{2}(0 ,T ;\mathbb{R}^{n(2+d)}\times l^2(\mathbb{R}^n) )$. For any $(\pi(\cdot) ,\eta ,\rho (\cdot ))\in \mathcal{H}[-\delta ,T]$, its norm is given by
\begin{equation}
	\begin{aligned}
\|(\pi(\cdot) ,\eta ,\rho (\cdot ))\|_{\mathcal{H} [-\delta,T]}:=\left \{ \|\pi(t)\|^{2}_{\mathcal{Q}(-\delta,0;\mathbb{R}^{n(2+d)}\times l^2(\mathbb{R}^{n}))}+\|\eta \|^{2}_{L^{2} _{\mathcal{F}_{T} }(\Omega ;\mathbb{R}^n )}+\|\rho(\cdot) \|^{2}_{\mathcal{N}_{\mathbb{F} }^{2}(0 ,T ;\mathbb{R}^{n(2+d)}\times l^2(\mathbb{R}^n) )} \right \}^{1/2}.\nonumber
	\end{aligned}
\end{equation}

In what follows, we shall present some basic results on SDEDL and ABSDEL.

Firstly, we study the following SDEDL:
\begin{equation}\label{eq:2.1}
	\left\{\begin{aligned}
		&dx_{t}  =b(t, x_{t} ,x'_{t}) dt+\sigma(t, x_{t} , x'_{t}) d W_{t} +\sum_{i=1}^{\infty }g^{(i)}  (t, x_{t-} ,x'_{t-})dH_{t}^{(i)},\quad t\in [0,T],    \\
		&x_{t} =\lambda_{t},\quad t \in[-\delta , 0],
	\end{aligned}\right.
\end{equation}
where $x'_{t}=x_{t-\delta}$ and $x'_{t-}=x_{(t-\delta)-}$ .

The coefficients $(b,\sigma,g,\lambda)$ are assumed to satisfy the following conditions:
\begin{ass}\label{ass:2.1}
	$\lambda(\cdot)\in C(-\delta ,0;\mathbb{R}^n)$ and $(b,\sigma,g)$ are three  given random mappings
	\begin{equation}
		\begin{aligned}
			&b:[0,T]\times \Omega \times \mathbb{R}^n \times \mathbb{R}^n \to \mathbb{R}^n ,\\
			&\sigma :[0,T]\times \Omega \times \mathbb{R}^n \times \mathbb{R}^n \to \mathbb{R}^{n\times d} ,\\
			&g= (g^{(i)}) _{i=1}^{\infty }:[0,T]\times \Omega \times \mathbb{R}^n \times \mathbb{R}^n \to l^2(\mathbb{R}^n )
		\end{aligned}\nonumber
	\end{equation}
satisfying

	(i)For any $x, x'\in \mathbb{R}^n $, $b(\cdot ,x,x')$, $\sigma(\cdot ,x,x')$ and $g(\cdot ,x,x')$ are $\mathbb{F}$-progressively measurable. Moreover, $b(\cdot ,0,0)\in L^{2} _{\mathbb{F}}(0,T;\mathbb{R}^n )$, $\sigma (\cdot ,0,0)\in L^{2} _{\mathbb{F}}(0,T;\mathbb{R}^{n\times d} )$, $g(\cdot ,0,0)\in L^{2} _{\mathbb{F}}(0,T;l^2(\mathbb{R}^n) )$.
	
	(ii)The mappings $b$, $\sigma$ and $g$ are uniformly Lipschitz continuous with respect to $(x,x')$, i.e., for any $x,\bar{x},x',\bar{x}'\in \mathbb{R}^n$, there exists a constant $L>0$ such that
	\begin{equation}
		|b(t,x,x')-b(t,\bar{x},\bar{x}')|+|\sigma (t,x,x')-\sigma (t,\bar{x},\bar{x}')|+\|g(t,x,x')-g(t,\bar{x},\bar{x}')\|_{l^2(\mathbb{R}^n)}\le L(|x-\bar{x} |+|x'-\bar{x}' |).\nonumber
	\end{equation}
\end{ass}
\begin{lem}\label{lem:2.2}
	Under Assumption \ref{ass:2.1}, SDEDL \eqref{eq:2.1} with coefficients $( b, \sigma, g,\lambda)$ admits a unique solution $x(\cdot )\in \mathcal{S}^{2 } _{\mathbb{F}}(0,T;\mathbb{R}^n )$. Moreover, we have the following estimate:
	\begin{eqnarray}\label{eq:2.2}
		\begin{aligned}
			\mathbb E\bigg[\displaystyle\sup_{t\in[0,T]}|x_{t}|^2\bigg]
			\leqslant K\mathbb E\bigg[\displaystyle\sup_{t\in[-\delta,0]}|\lambda_{t}|^{2}+\displaystyle\int_0^T|b(t,0,0)|^2dt
			+\displaystyle\int_0^T|\sigma (t,0,0)|^2dt
			+\int_0^T\|g(t,0,0)\|_{l^2(\mathbb{R}^n)}^2dt\bigg],
		\end{aligned}
	\end{eqnarray}
where $K$ is a positive constant depending only on $T$ and the Lipschitz constant $L$. Furthermore, let $(\bar{b}, \bar{\sigma}, \bar{g},\bar{\lambda})$ be another set of coefficients satisfying Assumption \ref{ass:2.1}, and assume that $\bar{x}(\cdot )\in \mathcal{S}^{2 } _{\mathbb{F}}(0,T;\mathbb{R}^n )$ is a solution to SDEDL \eqref{eq:2.1} corresponding  the coefficients $( \bar{b}, \bar{\sigma}, \bar{g},\bar{\lambda})$.
 %In addition, we assume that $\bar{b} (\cdot ,\bar{x}(\cdot ),\bar{x}'_{t}(\cdot )  )\in L^2_{\mathbb{F} }(0,T;\mathbb{R}^n )$, $\bar{\sigma } (\cdot ,\bar{x}(\cdot ),\bar{x}'_{t}(\cdot )  )\in L^2_{\mathbb{F} }(0,T;\mathbb{R}^{n\times d} )$, $\bar{g} (\cdot ,\bar{x}(\cdot ),\bar{x}'_{t}(\cdot )  )\in L^2_{\mathbb{F} }(0,T;l^2(\mathbb{R}^n) )$, $\bar{\lambda}_t\in C(-\delta ,0;\mathbb{R}^n  )$.
Then the following estimate holds:
\begin{eqnarray}\label{eq:2.3}
	\begin{aligned}
		\mathbb E\bigg[\displaystyle\sup_{t\in[0,T]}|x_{t}-\bar{x}_{t}|^2\bigg]
		\le&K\mathbb E\bigg[\displaystyle\sup_{t\in[-\delta,0]}|\lambda _{t}-\bar{\lambda} _{t}|^{2}
		+\displaystyle\int_0^T|b(t,\bar{x}_{t},\bar{x}'_{t })-\bar{b}(t,\bar{x}_{t},\bar{x}'_{t})|^2dt
		\\&+\displaystyle\int_0^T|\sigma (t,\bar{x}_{t},\bar{x}'_{t})-\bar{\sigma }(t,\bar{x}_{t},\bar{x}'_{t})|^2dt
		+\int_0^T\|g(t,\bar{x}_{t},\bar{x}'_{t })-\bar{g}(t,\bar{x}_{t},\bar{x}'_{t})\|_{l^2(\mathbb{R}^n)}^2dt\bigg],
	\end{aligned}
\end{eqnarray}
where $K$ is also a positive constant which only depends only on $T$ and the Lipschitz constant $L$.
\end{lem}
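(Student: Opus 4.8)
The plan is to handle the statement in three parts: existence and uniqueness, the a priori estimate \eqref{eq:2.2}, and the stability estimate \eqref{eq:2.3}. For \textbf{existence and uniqueness}, the crucial observation is that the delay term $x'_t=x_{t-\delta}$ depends only on the past, so \eqref{eq:2.1} can be solved recursively on the intervals $[0,\delta]$, $[\delta,2\delta]$, $\dots$, $[(N-1)\delta,\,N\delta\wedge T]$ with $N=\lceil T/\delta\rceil$. On $[0,\delta]$ one has $x'_t=x_{t-\delta}=\lambda_{t-\delta}$, a known continuous (deterministic) function, so \eqref{eq:2.1} restricted to $[0,\delta]$ is an ordinary (non-delayed) SDE driven by $W$ and the Teugels martingales $\{H^{(i)}\}_{i\ge1}$, with coefficients $\tilde b(t,x):=b(t,x,\lambda_{t-\delta})$, $\tilde\sigma(t,x):=\sigma(t,x,\lambda_{t-\delta})$ and $\tilde g(t,x):=g(t,x,\lambda_{t-\delta})$ that inherit from Assumption \ref{ass:2.1} progressive measurability, square-integrability at the origin, and the uniform Lipschitz property in $x$. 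Such an SDE has a unique solution in $\mathcal{S}^2_{\mathbb{F}}(0,\delta;\mathbb{R}^n)$ by the standard Picard/contraction argument built on the martingale representation theory for Lévy processes (cf. \cite{nualart2000chaotic,nualart2001backward,bahlali2003bsde}). Once $x$ is constructed on $[-\delta,k\delta]$, on the next block $x'_t=x_{t-\delta}$ is already known and lies in the appropriate $\mathcal{S}^2_{\mathbb{F}}$-space, so the same reasoning extends $x$ by one more step; after $N$ steps we glue together a unique $x(\cdot)\in\mathcal{S}^2_{\mathbb{F}}(0,T;\mathbb{R}^n)$.

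For the \textbf{a priori estimate} \eqref{eq:2.2}, with the solution already known to exist in $\mathcal{S}^2_{\mathbb{F}}(0,T;\mathbb{R}^n)$, I would apply It\^o's formula to $|x_t|^2$, take the supremum over $[0,t]$ and then expectations, and bound the Brownian and Teugels-martingale stochastic integrals by the Burkholder--Davis--Gundy inequality, using $\langle H^{(i)},H^{(j)}\rangle_t=\delta_{ij}t$ so that the jump part contributes $\mathbb{E}\int_0^t\|g(s,x_{s-},x'_{s-})\|_{l^2(\mathbb{R}^n)}^2ds=\mathbb{E}\int_0^t\|g(s,x_s,x'_s)\|_{l^2(\mathbb{R}^n)}^2ds$, the left limits being irrelevant inside the Lebesgue integral since $x$ jumps at only countably many times. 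The Lipschitz bounds of Assumption \ref{ass:2.1}(ii) give $|b(s,x_s,x'_s)|^2\le C(|x_s|^2+|x'_s|^2+|b(s,0,0)|^2)$ and likewise for $\sigma$ and $g$; combining these with the elementary inequality $\sup_{u\le s}|x'_u|^2\le\sup_{t\in[-\delta,0]}|\lambda_t|^2+\sup_{u\le s}|x_u|^2$ yields, for $\phi(t):=\mathbb{E}[\sup_{s\le t}|x_s|^2]$, an inequality $\phi(t)\le C\cdot(\text{data})+C\int_0^t\phi(s)\,ds$ with $C=C(T,L)$, so Gronwall's lemma gives \eqref{eq:2.2} with $K$ depending only on $T$ and $L$.

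The \textbf{stability estimate} \eqref{eq:2.3} is obtained by the same scheme applied to $\hat x_t:=x_t-\bar x_t$, which solves a SDEDL whose coefficients are the differences of the two sets of drifts and diffusions and whose initial path is $\lambda-\bar\lambda$. The only new ingredient is the decomposition $b(t,x_t,x'_t)-\bar b(t,\bar x_t,\bar x'_t)=\big[b(t,x_t,x'_t)-b(t,\bar x_t,\bar x'_t)\big]+\big[b(t,\bar x_t,\bar x'_t)-\bar b(t,\bar x_t,\bar x'_t)\big]$, in which the first bracket is controlled by $L(|\hat x_t|+|\hat x'_t|)$ through Assumption \ref{ass:2.1}(ii) and the second bracket is precisely the source term appearing on the right-hand side of \eqref{eq:2.3}; treating $\sigma$ and $g$ in the same way and rerunning the It\^o--BDG--Gronwall computation gives the estimate, again with $K=K(T,L)$.

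I expect the main difficulty to be technical rather than conceptual: carefully justifying the recursive construction across the finitely many subintervals so that the glued process is genuinely c\`{a}dl\`{a}g, $\mathbb{F}$-adapted and square-integrable in the $\mathcal{S}^2_{\mathbb{F}}$-sense, keeping the Gronwall constant independent of $\delta$, and correctly handling the infinite series of Teugels-martingale integrals (interchanging sum and integral, the $l^2(\mathbb{R}^n)$-valued It\^o isometry, and the left-limit arguments of $g$) when applying It\^o's formula and the Burkholder--Davis--Gundy inequality.
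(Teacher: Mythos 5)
Your proof of the two estimates is essentially the paper's argument: It\^o's formula applied to $|x_t|^2$ (resp.\ $|x_t-\bar x_t|^2$), the splitting $b(t,x_t,x'_t)-\bar b(t,\bar x_t,\bar x'_t)=[b(t,x_t,x'_t)-b(t,\bar x_t,\bar x'_t)]+[b(t,\bar x_t,\bar x'_t)-\bar b(t,\bar x_t,\bar x'_t)]$ together with the Lipschitz condition, the time-shift inequality $\int_0^t|\hat x'_s|^2ds\le \delta\sup_{[-\delta,0]}|\hat\lambda|^2+\int_0^t|\hat x_s|^2ds$, Gronwall, and Burkholder--Davis--Gundy to move the supremum inside the expectation; the paper likewise obtains \eqref{eq:2.2} from \eqref{eq:2.3} by setting the second set of coefficients to zero. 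The only genuine divergence is in the existence and uniqueness part: you construct the solution by the method of steps, solving a non-delayed L\'evy-driven SDE block by block on $[k\delta,(k+1)\delta]$ where the delayed argument is already known, whereas the paper simply cites Theorem 3.1 of Li and Wu \cite{li2014maximum} and remarks that unique solvability also follows from the stability estimate \eqref{eq:2.3} via the method of continuation. Your route is self-contained and exploits the pure point-delay structure, at the cost of the bookkeeping you yourself flag (adaptedness and c\`adl\`ag gluing across blocks, and checking that the frozen coefficients $\tilde b(t,x)=b(t,x,x_{t-\delta})$ on later blocks retain the integrability at the origin, which follows from the Lipschitz bound and $x\in\mathcal S^2_{\mathbb F}$ on the previous block); the paper's route is shorter but rests on an external result. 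Both are sound, and your ordering of ``sup then expectation then BDG'' versus the paper's two-stage ``$\sup_t\mathbb E$ by Gronwall, then $\mathbb E\sup_t$ by BDG'' is immaterial.
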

\begin{proof}
	Firstly, the existence of uniqueness of the solution to SDEDL \eqref{eq:2.1} has been proved in Theorem 3.1 of Li and Wu \cite{li2014maximum}, so we just need to prove the estimate \eqref{eq:2.2} and \eqref{eq:2.3}. In the following proof, the constant $K$ may be changed line to line.
	
For simplicity, we denote by
\begin{equation}
	\left\{\begin{aligned}
		&\widehat{x}_s=x_s-\bar{x}_s,\quad \widehat{x}_t=x_t-\bar{x}_t,\\
		&\widehat{x}'_{s}=x'_{s} -\bar{x}'_{s} , \quad \widehat{\lambda}_s=\lambda_s-\bar{\lambda}_s,\\
		&\widehat{b}_s=b(s,x_s,x'_{s})-\bar{b}(s,\bar{x}_s,\bar{x}'_{s} ),\\
		&\widehat{\sigma }_s=\sigma (s,x_s,x'_{s})-\bar{\sigma }(s,\bar{x}_s,\bar{x}'_{s} ),\\
		&\widehat{g}_s=g(s,x_s,x'_{s})-\bar{g}(s,\bar{x}_s,\bar{x}'_{s} ).
	\end{aligned}\right.\nonumber
\end{equation}
Using It\^{o}'s formula to $|\widehat{x}_s|^2$ leads to
\begin{equation}\label{eq:4}
	\begin{aligned}
		|\widehat{x}_t|^2=&|\widehat{x}_0|^2+2\int_0^t\left\langle\widehat{x}_s,\widehat{b}_s\right\rangle ds+\int _0^t|\widehat{\sigma }_s|^2ds+\sum_{i,j=1}^{\infty}\int_{0}^{t}\left \langle \widehat{g}_s^{(i)},\widehat{g}_s^{(j)}  \right \rangle  d[H_i,H_j]_s\\&+2\int _0^t\left\langle\widehat{x}_s,\widehat{\sigma }_sdW_s\right\rangle +2\sum_{i=1}^{\infty}\int _0^t\left\langle\widehat{x}_{s-},
\widehat{g}^{(i)}_{s-}\right\rangle dH^{(i)}_s.
	\end{aligned}
\end{equation}
Taking expectation on both sides and applying the elementary inequality $2ab\le a^2+b^2$, we can get
\begin{equation}\label{eq:0.5}
	\mathbb{E}\big[|\widehat{x}_t|^2\big]\le\mathbb{E}\Bigg \{|\widehat{x}_0|^2+\int_0^t\big(|\widehat{x}_s|^2+|\widehat{b}_s|^2+|\widehat{\sigma }_s|^2+ \|\widehat{g}_s\|_{l^2(\mathbb{R}^n)}^2\big)ds\Bigg \}.	
\end{equation}
It is easy to verify that
\begin{equation}\label{eq:0.4}
	\int_{0}^{t} |\widehat{x}'_{s}|^2ds=\int_{-\delta }^{t-\delta } |\widehat{x}_{s}|^2ds=\int_{-\delta }^{0} |\widehat{x}_{s}|^2+\int_{0}^{t-\delta } |\widehat{x}_{s}|^2ds\le \delta\displaystyle\sup_{t\in[-\delta,0]}|\widehat{\lambda}_{s}|^2+\int_{0}^{t} |\widehat{x}_{s}|^2ds.
\end{equation}
With the Lipschitz condition in Assumption \ref{ass:2.1}, the inequality \eqref{eq:0.4} and the elementary inequality $(a+b)^2\le 2a^2+2b^2$, we find that
\begin{equation}\label{eq:0.6}
	\begin{aligned}
		\int_0^t|\widehat{b}_s |^2ds=&\int_0^t\big|b(s,x_s,x'_{s })-b(s,\bar{x}_s,\bar{x}'_{s})+b(s,\bar{x}_s,\bar{x}'_{s })-\bar{b}(s,\bar{x}_s,\bar{x}'_{s})\big|^2ds\\
		\le&2\int_0^t\big|b(s,x_s,x'_{s})-b(s,\bar{x}_s,\bar{x}'_{s})\big|^2ds
		+2\int_0^t\big|b(s,\bar{x}_s,\bar{x}'_{s })-\bar{b}(s,\bar{x}_s,\bar{x}'_{s})\big|^2ds\\
		\le&2L^2\int_0^t\big(|\widehat{x}_s|+|\widehat{x}'_{s}|\big)^2ds+2\int_0^t\big|b(s,\bar{x}_s,\bar{x}'_{s})-\bar{b}(s,\bar{x}_s,\bar{x}'_{s })\big|^2ds\\
		\le& 4L^2\int_0^t|\widehat{x}_s|^2ds+4L^2\int_{0 }^{t }|\widehat{x}'_{s}|^2ds
		+2\int_0^t\big|b(s,\bar{x}_s,\bar{x}'_{s })-\bar{b}(s,\bar{x}_s,\bar{x}'_{s})\big|^2ds\\
		\le&8L^2\int_0^t|\widehat{x}_s|^2ds+4\delta L^2\displaystyle\sup_{t\in[-\delta,0]}|\widehat{\lambda}_s|^2
		+2\int_0^t\big|b(s,\bar{x}_s,\bar{x}'_{s })-\bar{b}(s,\bar{x}_s,\bar{x}'_{s })\big|^2ds.
	\end{aligned}
\end{equation}
Likewise,
\begin{equation}\label{eq:0.7}
	\begin{aligned}
\int_0^t|\widehat{\sigma }_s|^2ds\le8L^2\int_0^t|\widehat{x}_s|^2ds+4\delta L^2\displaystyle\sup_{t\in[-\delta,0]}|\widehat{\lambda}_s|^2+2\int_0^t\big|\sigma (s,\bar{x}_s,\bar{x}'_{s})-\bar{\sigma }(s,\bar{x}_s,\bar{x}'_{s })\big|^2ds,
	\end{aligned}
\end{equation}
\begin{equation}\label{eq:0.8}
	\begin{aligned}	
\int_0^t\|\widehat{g}_s\|_{l^2(\mathbb{R}^n)}^2ds\le8L^2\int_0^t|\widehat{x}_s|^2ds+4\delta L^2\displaystyle\sup_{t\in[-\delta,0]}|\widehat{\lambda}_s|^2+2\int_0^t\big\|g(s,\bar{x}_s,\bar{x}'_{s })-\bar{g}(s,\bar{x}_s,\bar{x}'_{s })\big\|_{l^2(\mathbb{R}^n)}^2ds.
	\end{aligned}	
\end{equation}
Putting \eqref{eq:0.6}, \eqref{eq:0.7} and \eqref{eq:0.8} into \eqref{eq:0.5}, we derive
\begin{equation}
	\begin{aligned}
		\mathbb{E}\big[|\widehat{x}_t|^2\big]\le&K\mathbb{E}\Bigg \{\int_0^t|\widehat{x}_s|^2ds+\displaystyle\sup_{t\in[-\delta,0]}|\widehat{\lambda}_s|^2
		+\int_0^t\big|b(s,\bar{x}_s,\bar{x}'_{s })-\bar{b}(s,\bar{x}_s,\bar{x}'_{s })\big|^2ds\\
		&+\int_0^t\big|\sigma (s,\bar{x}_s,\bar{x}'_{s })-\bar{\sigma }(s,\bar{x}_s,\bar{x}'_{s})\big|^2ds
		+\int_0^t\big\|g(s,\bar{x}_s,\bar{x}'_{s })-\bar{g}(s,\bar{x}_s,\bar{x}'_{s })\big\|_{l^2(\mathbb{R}^n)}^2ds\Bigg \}.
	\end{aligned}
\end{equation}
Thus, applying Gronwall’s inequality gives
\begin{equation}\label{eq:0.11}
	\begin{aligned}
		\displaystyle\sup_{t\in[0,T]}\mathbb{E}\big[|\widehat{x}_t|^2\big]\le&K\mathbb{E}\Bigg \{\displaystyle\sup_{t\in[-\delta,0]}|\widehat{\lambda}_s|^2
		+\int_0^T\big|b(s,\bar{x}_s,\bar{x}'_{s })-\bar{b}(s,\bar{x}_s,\bar{x}'_{s })\big|^2ds+\int_0^T\big|\sigma (s,\bar{x}_s,\bar{x}'_{s })-\bar{\sigma }(s,\bar{x}_s,\bar{x}'_{s})\big|^2ds\\
		&+\int_0^T\big\|g(s,\bar{x}_s,\bar{x}'_{s })-\bar{g}(s,\bar{x}_s,\bar{x}'_{s })\big\|_{l^2(\mathbb{R}^n)}^2ds\Bigg \}.
	\end{aligned}
\end{equation}
From \eqref{eq:4}, \eqref{eq:0.6} to \eqref{eq:0.8} and \eqref{eq:0.11}, we apply the Burkholder-Davis-Gundy inequality and derive that
\begin{equation}
	\begin{aligned}
		\mathbb{E}\big[\displaystyle\sup_{t\in[0,T]}|\widehat{x}_t|^2\big]\le\ &K\mathbb{E}\Bigg \{\displaystyle\sup_{t\in[-\delta,0]}|\widehat{\lambda}_s|^2+\int_{0}^{T}\big(|\widehat{x}_s|^2+|\widehat{b}_s|^2+|\widehat{\sigma }_s|^2+ \|\widehat{g}_s\|_{l^2(\mathbb{R}^n)}^2\big)ds\\&\qquad+\displaystyle\sup_{t\in[0,T]}2\int _0^t\left\langle\widehat{x}_s,\widehat{\sigma }_sdW_s\right\rangle +\displaystyle\sup_{t\in[0,T]}2\sum_{i=1}^{\infty}\int _0^t\left\langle\widehat{x}_{s-},
		\widehat{g}^{(i)}_{s-}\right\rangle dH^{(i)}_s   \Bigg \}\\
		\le\ &K\mathbb{E}\Bigg \{\displaystyle\sup_{t\in[-\delta,0]}|\widehat{\lambda}_s|^2+\int_0^T\big|b(s,\bar{x}_s,\bar{x}'_{s })-\bar{b}(s,\bar{x}_s,\bar{x}'_{s })\big|^2ds+\int_0^T\big|\sigma (s,\bar{x}_s,\bar{x}'_{s })-\bar{\sigma }(s,\bar{x}_s,\bar{x}'_{s})\big|^2ds\\
		&\qquad+\int_0^T\big\|g(s,\bar{x}_s,\bar{x}'_{s })-\bar{g}(s,\bar{x}_s,\bar{x}'_{s })\big\|_{l^2(\mathbb{R}^n)}^2ds\Bigg \}+\frac{1}{2}\mathbb{E}\big[\displaystyle\sup_{t\in[0,T]}|\widehat{x}_t|^2\big].
	\end{aligned}
\end{equation}
Then we can easily deduce the desired estimate \eqref{eq:2.3} and we take $(\bar{b}, \bar{\sigma}, \bar{g},\bar{\lambda})=(0,0,0,0)$ such taht  the estimate \eqref{eq:2.2} holds.
For the proof of the existence of uniqueness of the solution, we can also get it directly from the estimate \eqref{eq:2.3} by the method of continuation.
\end{proof}
Secondly, we consider the ABSDEL as follows:
\begin{equation}\label{eq:2.4}
	\left\{\begin{aligned}
		&dy_{t} =f(t, y_{t},z_{t},k_{t},y'_t,z'_t,k'_t) dt+z_{t}d W_{t}+\sum_{i=1}^{\infty }k_{t} ^{(i)}dH_{t}^{(i)}, \quad t \in[0, T],\\
		&y_T =\nu,\\
		&y_t=z_t=k_t=0,\quad t\in (T,T+\delta],
	\end{aligned}\right.
\end{equation}
where $y'_t=\mathbb{E} ^{\mathcal{F}_t }[y_{t+\delta }]$, $z'_t=\mathbb{E} ^{\mathcal{F}_t }[z_{t+\delta }]$, $k'_t=\mathbb{E} ^{\mathcal{F}_t }[k_{t+\delta }]$.

The coefficients $(\nu, f)$ are assumed to satisfy the following assumptions:
\begin{ass}\label{ass:2.2}
	$\nu\in L^2_{\mathcal{F}_T } (\Omega ;\mathbb{R}^n )$ and $f$ is a given random mapping
	\begin{equation}
		\begin{aligned}
			  f:[0,T]\times \Omega \times \mathbb{R}^n \times \mathbb{R}^{n\times d}\times l^2(\mathbb{R}^n) \times \mathbb{R}^n \times \mathbb{R}^{n\times d}\times l^2(\mathbb{R}^n)\to \mathbb{R}^n\nonumber
		\end{aligned}
	\end{equation}
satisfying

	(i)For any $(y,z,k,y',z',k')\in \mathbb{R}^n\times \mathbb{R}^{n\times d}\times l^2(\mathbb{R}^n)\times\mathbb{R}^n\times \mathbb{R}^{n\times d}\times l^2(\mathbb{R}^n)$, $f(\cdot, y, z, k, y', z', k' )$ is $\mathbb{F}$-progressively measurable. Besides, $f(\cdot , 0, 0, 0, 0, 0, 0)\in  L^2_{\mathbb{F} }(0,T;\mathbb{R}^{n} )$.

  (ii)The mapping $f$ is uniformly Lipschitz continuous with respect to $(y,z,k,y',z',k')$, i.e., for any $y,\bar{y},y',\bar{y}'\in \mathbb{R}^n$, $z,\bar{z},z',\bar{z}'\in \mathbb{R}^{n\times d}$, $k,\bar{k},k',\bar{k}'\in l^2(\mathbb{R}^n) $, there exists a constant $L>0$ such that
	\begin{eqnarray}
		\begin{aligned}
		|f&(t , y, z, k, y', z', k')-f(t ,\bar{y} , \bar{z}, \bar{k}, \bar{y}', \bar{z}', \bar{k}')|\\
		&\le L\big(|y-\bar{y} |+|z-\bar{z} |+\|k-\bar{k} \|_{l^2(\mathbb{R}^n)}+|y'-\bar{y}' |+|z'-\bar{z}' |+\|k'-\bar{k}' \|_{l^2(\mathbb{R}^n)}\big).\nonumber
		\end{aligned}
	\end{eqnarray}
\end{ass}
\begin{lem}\label{lem:2.3}
	Under Assumption \ref{ass:2.2}, ABSDEL \eqref{eq:2.4} with coefficients $(\nu, f)$ admits a unique solution $(y(\cdot ),z(\cdot ),k(\cdot ))\in \mathcal{S} ^2_\mathbb{F}(0,T;\mathbb{R}^n )\times  M^2_\mathbb{F}(0,T;\mathbb{R}^{n\times d} )\times L^2_\mathbb{F}(0,T;l^2(\mathbb{R}^n) )$. Moreover, the following estimate holds:
	\begin{eqnarray}\label{eq:2.5}
		\begin{aligned}
			\mathbb E\bigg[\displaystyle\sup_{t\in[0,T]}|y_{t}|^2+\int_{0}^{T}(|z_{t}|^{2}
			+\|k_{t}\|_{l^2(\mathbb{R}^n)}^{2})dt\bigg]
			\le&K\mathbb E\bigg[|\nu|^2+\displaystyle\int_0^T|f(t,0,0,0,0,0,0)|^2dt
			\bigg],
		\end{aligned}
	\end{eqnarray}
where $K$ is a positive constant depending only on $T$ and the Lipschitz constant $L$ of the mapping $f$. Furthermore, suppose that $(\bar{\nu}, \bar{f})$ is another set of coefficients, and assume that $(\bar{y}(\cdot ),\bar{z}(\cdot ),\bar{k}(\cdot ))\in \mathcal{S} ^2_\mathbb{F}(0,T;\mathbb{R}^n )\times  M^2_\mathbb{F}(0,T;\mathbb{R}^{n\times d} )\times L^2_\mathbb{F}(0,T ;l^2(\mathbb{R}^n) )$ is a solution to ABSDEL \eqref{eq:2.4} with coefficients $(\bar{\nu}, \bar{f})$ satisfying Assumption \ref{ass:2.2}.  Then we have the following estimate:
\begin{eqnarray}\label{eq:2.6}
	\begin{aligned}
		&\quad\mathbb{E}\bigg[\displaystyle\sup_{t\in[0,T]}|y_t-\bar{y}_t|^2+\int_{0}^{T}(|z_{t}-\bar{z}_{t}|^{2}
		+\|k_{t}-\bar{k}_{t}\|_{l^2(\mathbb{R}^n)}^{2})dt\bigg]
		\\&\le K\mathbb E\bigg[
		\displaystyle\int_0^T|f(t,\bar{y}_{t},\bar{z}_{t},\bar{k}_{t},\bar{y}'_t,\bar{z}'_t,\bar{k}'_t)-\bar{f}(t,\bar{y}_{t},\bar{z}_{t},\bar{k}_{t},\bar{y}'_t,\bar{z}'_t,\bar{k}'_t)|^2dt+|\nu-\bar{\nu}|^2\bigg],
	\end{aligned}
\end{eqnarray}
where $\bar{y}'_t=\mathbb{E} ^{\mathcal{F}_t }[\bar{y}_{t+\delta }]$, $\bar{z}'_t=\mathbb{E} ^{\mathcal{F}_t }[\bar{z}_{t+\delta }]$, $\bar{k}'_t=\mathbb{E} ^{\mathcal{F}_t }[\bar{k}_{t+\delta }]$ and the positive constant $K$ is similar to the previous one.
\end{lem}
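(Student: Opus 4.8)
The plan is to follow the template of the proof of Lemma \ref{lem:2.2}: the unique solvability of ABSDEL \eqref{eq:2.4} has already been established by Li and Wu \cite{li2014maximum} for this kind of anticipated BSDE with L\'{e}vy processes (and in any case can be recovered from estimate \eqref{eq:2.6} by a standard contraction argument), so the real content is the two a priori bounds. Moreover, \eqref{eq:2.5} is just the special case of \eqref{eq:2.6} with $(\bar{\nu},\bar{f})=(0,0)$ (for which $(\bar{y},\bar{z},\bar{k})\equiv 0$ is the solution and $f(\cdot,\bar{y},\bar{z},\bar{k},\bar{y}',\bar{z}',\bar{k}')-\bar{f}(\cdot)=f(\cdot,0,0,0,0,0,0)$), so it suffices to prove \eqref{eq:2.6}. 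I would write $\widehat{y}_t=y_t-\bar{y}_t$, $\widehat{z}_t=z_t-\bar{z}_t$, $\widehat{k}_t=k_t-\bar{k}_t$, $\widehat{\nu}=\nu-\bar{\nu}$, $\widehat{f}_s=f(s,y_s,z_s,k_s,y'_s,z'_s,k'_s)-\bar{f}(s,\bar{y}_s,\bar{z}_s,\bar{k}_s,\bar{y}'_s,\bar{z}'_s,\bar{k}'_s)$ and $\Delta f_s=f(s,\bar{y}_s,\bar{z}_s,\bar{k}_s,\bar{y}'_s,\bar{z}'_s,\bar{k}'_s)-\bar{f}(s,\bar{y}_s,\bar{z}_s,\bar{k}_s,\bar{y}'_s,\bar{z}'_s,\bar{k}'_s)$, so that $d\widehat{y}_t=\widehat{f}_t\,dt+\widehat{z}_t\,dW_t+\sum_{i=1}^{\infty}\widehat{k}_t^{(i)}\,dH_t^{(i)}$ with $\widehat{y}_T=\widehat{\nu}$, and recall that $\widehat{y}_t=\widehat{z}_t=\widehat{k}_t=0$ on $(T,T+\delta]$.

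First I would apply It\^{o}'s formula to $|\widehat{y}_t|^2$ on $[t,T]$, use the orthonormality $\langle H^{(i)},H^{(j)}\rangle_t=\delta_{ij}t$ to compensate the jump bracket terms, and take expectations to obtain
\[\mathbb{E}|\widehat{y}_t|^2 + \mathbb{E}\int_t^T\bigl(|\widehat{z}_s|^2+\|\widehat{k}_s\|_{l^2(\mathbb{R}^n)}^2\bigr)\,ds = \mathbb{E}|\widehat{\nu}|^2 - 2\mathbb{E}\int_t^T\langle\widehat{y}_s,\widehat{f}_s\rangle\,ds.\]
Splitting $\widehat{f}_s$ into its Lipschitz part (bounded by $L(|\widehat{y}_s|+|\widehat{z}_s|+\|\widehat{k}_s\|_{l^2(\mathbb{R}^n)}+|\widehat{y}'_s|+|\widehat{z}'_s|+\|\widehat{k}'_s\|_{l^2(\mathbb{R}^n)})$) plus $\Delta f_s$, I would estimate the cross term by Young's inequality, being careful to put a small coefficient in front of $|\widehat{z}'_s|^2$ and $\|\widehat{k}'_s\|_{l^2(\mathbb{R}^n)}^2$ and a coefficient $\tfrac14$ in front of $|\widehat{z}_s|^2$ and $\|\widehat{k}_s\|_{l^2(\mathbb{R}^n)}^2$. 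The key device for the anticipated data is that, by conditional Jensen and the vanishing of $\widehat{z},\widehat{k},\widehat{y}$ on $(T,T+\delta]$,
\[\mathbb{E}\int_t^T|\widehat{z}'_s|^2\,ds = \mathbb{E}\int_t^T\bigl|\mathbb{E}^{\mathcal{F}_s}[\widehat{z}_{s+\delta}]\bigr|^2\,ds \le \mathbb{E}\int_t^T|\widehat{z}_u|^2\,du,\]
and likewise for $\widehat{k}'$ and $\widehat{y}'$. Substituting, the contributions of $|\widehat{z}'_s|^2$, $\|\widehat{k}'_s\|_{l^2(\mathbb{R}^n)}^2$ get shifted onto $[0,T]$ and absorbed into the left-hand side, leaving
\[\mathbb{E}|\widehat{y}_t|^2 + \tfrac12\mathbb{E}\int_t^T\bigl(|\widehat{z}_s|^2+\|\widehat{k}_s\|_{l^2(\mathbb{R}^n)}^2\bigr)\,ds \le \mathbb{E}|\widehat{\nu}|^2 + C\mathbb{E}\int_t^T|\widehat{y}_s|^2\,ds + \mathbb{E}\int_0^T|\Delta f_s|^2\,ds.\]
A backward Gronwall argument applied to $t\mapsto\mathbb{E}|\widehat{y}_t|^2$ gives $\sup_{t\in[0,T]}\mathbb{E}|\widehat{y}_t|^2\le K\bigl(\mathbb{E}|\widehat{\nu}|^2+\mathbb{E}\int_0^T|\Delta f_s|^2\,ds\bigr)$, and feeding this back into the previous inequality yields the same bound for $\mathbb{E}\int_0^T(|\widehat{z}_s|^2+\|\widehat{k}_s\|_{l^2(\mathbb{R}^n)}^2)\,ds$. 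Finally, returning to the It\^{o} identity and applying the Burkholder--Davis--Gundy inequality to the $dW$ and $dH^{(i)}$ martingale parts (exactly as in the proof of Lemma \ref{lem:2.2}, reabsorbing the resulting $\tfrac12\mathbb{E}[\sup_t|\widehat{y}_t|^2]$) upgrades $\sup_t\mathbb{E}|\widehat{y}_t|^2$ to $\mathbb{E}[\sup_t|\widehat{y}_t|^2]$, which is \eqref{eq:2.6}; \eqref{eq:2.5} then follows by the specialization noted above.

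The step I expect to be the main obstacle is precisely the treatment of the anticipated integrands $z'_s$ and $k'_s$ appearing in the generator: unlike in a classical BSDE estimate, the right-hand side now contains terms involving $|\widehat{z}'_s|^2$ and $\|\widehat{k}'_s\|_{l^2(\mathbb{R}^n)}^2$, and these must be controlled without destroying the coercive martingale-integrand terms on the left. The resolution combines two ingredients that must be arranged together: (i) Young's inequality applied with a sufficiently small parameter on the $\widehat{z}'_s,\widehat{k}'_s$ contributions (exploiting that they are multiplied by $|\widehat{y}_s|$), and (ii) the conditional-Jensen-plus-zero-extension bound that moves the time integral of these anticipated quantities back onto $[0,T]$; only after both are in place can the terms be absorbed on the left and the backward Gronwall inequality be invoked. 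Everything else — the It\^{o} expansion, the Lipschitz and elementary inequalities, and the BDG step — is routine and parallels Lemma \ref{lem:2.2}.
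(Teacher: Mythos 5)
Your proposal is correct and follows essentially the same route as the paper's proof: cite Li--Wu for unique solvability, apply It\^{o}'s formula to $|\widehat{y}_t|^2$, control the generator via the Lipschitz condition and Young's inequality, shift the anticipated terms back to $[0,T]$ by conditional Jensen together with the zero extension on $(T,T+\delta]$, absorb, apply Gronwall and then BDG, and finally specialize $(\bar{\nu},\bar{f})=(0,0)$ to get \eqref{eq:2.5}. The only difference is cosmetic (you weight the Young parameters separately for the $\widehat{z},\widehat{k}$ and $\widehat{z}',\widehat{k}'$ contributions, whereas the paper uses a single $\varepsilon$ with $24\varepsilon L^2<1$).
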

\begin{proof}
	In fact, Theorem 3.2 of Li and Wu \cite{li2014maximum} has also proved the unqiue solvability result of solution to ABSDEL \eqref{eq:2.4} by means of the Fix Point Theorem. Therefore, here we will  only give the proof of the estimate of the solution.
	
Similarly, the constant $K$ can also be changed line to line. Furthermore, to simplify our notation, we denote by
\begin{equation}
	\left\{\begin{aligned}
		&\widehat{y}_s=y_s-\bar{y}_s,\quad \widehat{z}_s=z_s-\bar{z}_s,\quad \widehat{k}_s=k_s-\bar{k}_s, \\
		&\widehat{y}'_s=y'_s-\bar{y}'_s,\quad \widehat{z}'_s=z'_s-\bar{z}'_s,\quad \widehat{k}'_s=k'_s-\bar{k}'_s, \\
		&\widehat{f}_s=f(s,y_s,z_s,k_s,y_s',z_s',k_s')- \bar{f}(s,\bar{y}_s,\bar{z}_s,\bar{k}_s,\bar{y}_s',\bar{z}_s',\bar{k}_s'),\\
		&\widehat{\nu}=\nu-\bar{\nu}.
	\end{aligned}\right.\nonumber
\end{equation}
Firstly, applying the It\^{o}'s formula to $|\widehat{y}_s|^2$ gives
\begin{equation}\label{eq:16}
 	\begin{aligned}
 		&|\widehat{y}_t|^2+\int_{t}^{T}|\widehat{z}_t|^2+\sum_{i,j=1}^{\infty}\int_{t}^{T}\left \langle \widehat{k}_s^{(i)},\widehat{k}_s^{(j)}  \right \rangle  d[H_i,H_j]_s\\
 		=\ &|\widehat{\nu}|^2-2\int_{t}^{T}\left \langle \widehat{y}_s,\widehat{f}_s\right \rangle ds -2\int_{t}^{T}\left \langle \widehat{y}_s,\widehat{z}_sdW_s\right \rangle -2\sum_{i=1}^{\infty}\int_{t}^{T}\left \langle \widehat{y}_s,\widehat{k}^{(i)}_s\right \rangle dH_s^{(i)}.
 	\end{aligned}
 \end{equation}
Taking expectation on both sides, we have
	\begin{align}
		&\quad\mathbb E\bigg[\displaystyle|\widehat{ y}_{t}|^2+\int_{t}^{T}\big(|\widehat{ z}_{s}|^{2}+\|\widehat{k}_{s}\|_{l^2(\mathbb{R}^n)}^{2}\big)ds\bigg ]\nonumber\\
		&\le \mathbb{E}\Bigg \{\big|\widehat{\nu}\big|^2+\int _t^T 2|\widehat{y}_s||\widehat{f}_s|ds \Bigg \}\nonumber\\
		&\le \mathbb{E}\Bigg \{ \big|\widehat{\nu}\big|^2+\frac{1}{\varepsilon } \int _t^T|\widehat{y}_s|^2ds+\varepsilon \int _t^T|\widehat{f}_s|^2ds\Big \}\nonumber\\
		&=\mathbb{E}\Bigg \{ \big|\widehat{\nu}\big|^2+\frac{1}{\varepsilon } \int _t^T|\widehat{y}_s|^2ds+\varepsilon \int _t^T|f(s,y,z,k,y',z',k')-f(s,\bar{y}_s,\bar{z}_s,\bar{k}_s,\bar{y}_s',\bar{z}_s',\bar{k}_s')\\
		&~~~~~~~~ +f(s,\bar{y}_s,\bar{z}_s,\bar{k}_s,\bar{y}_s',\bar{z}_s',\bar{k}_s')-\bar{f}(s,\bar{y}_s,\bar{z}_s,\bar{k}_s,\bar{y}_s',\bar{z}_s',\bar{k}_s')|^2ds\Bigg \}\nonumber\\
		&\le \mathbb{E}\Bigg \{ \big|\widehat{\nu}\big|^2+\frac{1}{\varepsilon } \int _t^T|\widehat{y}_s|^2ds+2\varepsilon \int _t^T|f(s,y,z,k,y',z',k')-f(s,\bar{y}_s,\bar{z}_s,\bar{k}_s,\bar{y}_s',\bar{z}_s',\bar{k}_s')|^2ds\nonumber\\
		&~~~~~~~~+2\varepsilon \int _t^T|f(s,\bar{y}_s,\bar{z}_s,\bar{k}_s,\bar{y}_s',\bar{z}_s',\bar{k}_s')-\bar{f}(s,\bar{y}_s,\bar{z}_s,\bar{k}_s,\bar{y}_s',\bar{z}_s',\bar{k}_s')|^2ds\Bigg \},\nonumber
	\end{align}
where the elementary inequality $2ab\le \frac{1}{\varepsilon }a^2+\varepsilon b^2$ and $(a+b)^2\le 2a^2+2b^2$ for any $a>0$, $b>0$, $\varepsilon>0$ have been used.
Then applying the Lipschitz condition in Assumption \ref{ass:2.2} and the Cauchy-Schwarz inequality yields
\begin{equation}\label{eq:2.14}
	\begin{aligned}
		&\quad\mathbb E\bigg[\displaystyle|\widehat{ y}_{t}|^2+\int_{t}^{T}\big(|\widehat{ z}_{s}|^{2}+\|\widehat{k}_{s}\|_{l^2(\mathbb{R}^n)}^{2}\big)ds\bigg ]\\
		&\le \mathbb{E}\Bigg \{ \big|\widehat{\nu}\big|^2+\frac{1}{\varepsilon } \int _t^T|\widehat{y}_s|^2ds+2\varepsilon \int _t^T|f(s,\bar{y}_s,\bar{z}_s,\bar{k}_s,\bar{y}_s',\bar{z}_s',\bar{k}_s')-\bar{f}(s,\bar{y}_s,\bar{z}_s,\bar{k}_s,\bar{y}_s',\bar{z}_s',\bar{k}_s')|^2ds\\
		&~~~~~~~~+12\varepsilon L^2\int _t^T\Big[|\widehat{y}_s|^2+|\widehat{z}_s|^2+\|\widehat{k}_s\|_{l^2(\mathbb{R}^n)}^2+|\widehat{y}'_s|^2+|\widehat{z}'_s|^2+\|\widehat{k}'_s\|_{l^2(\mathbb{R}^n)}^2\Big]ds\Bigg\}.
	\end{aligned}
\end{equation}
By virtue of the Jensen's inequality and time-shifting transformation, we notice that
\begin{equation}\label{eq:2.15}
	\begin{aligned}
	\mathbb{E}\Big[\int_{t}^{T} |\widehat{y}'_s |^2ds\Big]=\mathbb{E}\Big[\int_{t}^{T} \big|\mathbb{E}\big[\widehat{y}_{s+\delta}|\mathcal{F}_s\big] \big|^2ds\Big]\le\mathbb{E}\Big[\int_{t}^{T }\mathbb{E}\big[|\widehat{y}_{s+\delta}|^2|\mathcal{F}_s\big] ds\Big]=\mathbb{E}\Big[\int_{t+\delta }^{T+\delta}|\widehat{y}_s |^2ds\Big]\le \mathbb{E}\Big[\int_{t}^{T}|\widehat{y}_s |^2ds\Big].
	\end{aligned}
\end{equation}
Similarly, we have
\begin{equation}\label{eq:2.16}
	\mathbb{E}\Big[\int_{t}^{T} |\widehat{z}'_s |^2ds\Big]\le \mathbb{E}\Big[\int_{t}^{T} |\widehat{z}_s |^2ds\Big],
\end{equation}
\begin{equation}\label{eq:2.17}
	\mathbb{E}\Big[\int_{t}^{T} \|\widehat{k}'_s \|_{l^2(\mathbb{R}^n)}^2ds\Big]\le \mathbb{E}\Big[\int_{t}^{T} \|\widehat{k}_s \|_{l^2(\mathbb{R}^n)}^2ds\Big].
\end{equation}
Taking \eqref{eq:2.15}, \eqref{eq:2.16}, \eqref{eq:2.17} into \eqref{eq:2.14}, we can get
\begin{equation}
	\begin{aligned}
		&\quad\mathbb E\bigg[\displaystyle|\widehat{ y}_{t}|^2+\int_{t}^{T}\big(|\widehat{ z}_{s}|^{2}+\|\widehat{k}_{s}\|_{l^2(\mathbb{R}^n)}^{2}\big)ds\bigg ]\\
		&\le \mathbb{E}\Bigg \{ \big|\widehat{\nu}\big|^2+2\varepsilon \int _t^T|f(s,\bar{y}_s,\bar{z}_s,\bar{k}_s,\bar{y}_s',\bar{z}_s',\bar{k}_s')-\bar{f}(s,\bar{y}_s,\bar{z}_s,\bar{k}_s,\bar{y}_s',\bar{z}_s',\bar{k}_s')|^2ds\\
		&\quad+\frac{1}{\varepsilon } \int _t^T|\widehat{y}_s|^2ds+24\varepsilon L^2\int _t^T\Big[|\widehat{y}_s|^2+|\widehat{z}_s|^2+\|\widehat{k}_s\|_{l^2(\mathbb{R}^n)}^2\Big]ds\Bigg \}.
	\end{aligned}
\end{equation}
Selecting $\varepsilon$ small enough such that $24\varepsilon L^2<1$ leads to
\begin{equation}
	\begin{aligned}
		&\mathbb E\bigg[|\widehat{ y}_{t}|^2+\int_{t}^{T}\big(|\widehat{ z}_{s}|^{2}+\|\widehat{k}_{s}\|_{l^2(\mathbb{R}^n)}^{2}\big)ds\bigg ]\\
		\le\ & K\mathbb{E}\Bigg \{ \int_t^T|f(s,\bar{y}_s,\bar{z}_s,\bar{k}_s,\bar{y}_s',\bar{z}_s',\bar{k}_s')-\bar{f}(s,\bar{y}_s,\bar{z}_s,\bar{k}_s,\bar{y}_s',\bar{z}_s',\bar{k}_s')|^2ds
		+\big|\widehat{\nu}\big|^2+ \int _t^T|\widehat{y}_s|^2ds\Bigg\}.
	\end{aligned}
\end{equation}
By using Gronwall’s inequality, we deduce
\begin{equation}\label{eq:24}
	\begin{aligned}
		&\displaystyle\sup_{t\in[0,T]}\mathbb E\big[|\widehat{ y}_{t}|^2\big ]+\mathbb E\bigg[\int_{0}^{T}\big(|\widehat{ z}_{s}|^{2}+\|\widehat{k}_{s}\|_{l^2(\mathbb{R}^n)}^{2}\big)ds\bigg ]\\
	\le\ & K\mathbb{E}\Bigg \{ \int_0^T|f(s,\bar{y}_s,\bar{z}_s,\bar{k}_s,\bar{y}_s',\bar{z}_s',\bar{k}_s')-\bar{f}(s,\bar{y}_s,\bar{z}_s,\bar{k}_s,\bar{y}_s',\bar{z}_s',\bar{k}_s')|^2ds
	+\big|\widehat{\nu}\big|^2\Bigg\}.	
	\end{aligned}
\end{equation}
Based on all of the above analysis, we combine \eqref{eq:16}, \eqref{eq:24} and apply the Burkholder-Davis-Gundy inequality to get
\begin{equation}\label{eq:25}
	\begin{aligned}
		&\mathbb E\bigg[\displaystyle\sup_{t\in[0,T]}|\widehat{ y}_{t}|^2\bigg ]\\
		\le\ & K\mathbb{E}\Bigg\{|\widehat{\nu}|^2+\frac{1}{\varepsilon}\int _0^T|\widehat{y}_s|^2ds+\varepsilon \int _0^T|\widehat{f}_s|^2ds+2\displaystyle\sup_{t\in[0,T]}\bigg| \int_{t}^{T}\left \langle \widehat{y}_s,\widehat{z}_sdW_s\right \rangle\bigg| +2\displaystyle\sup_{t\in[0,T]}\bigg|\sum_{i=1}^{\infty}\int_{t}^{T}\left \langle \widehat{y}_s,\widehat{k}^{(i)}_s\right \rangle dH_s^{(i)}\bigg|\Bigg\}\\
		\le\ &K\mathbb{E}\Bigg\{|\widehat{\nu}|^2+\int_0^T|f(s,\bar{y}_s,\bar{z}_s,\bar{k}_s,\bar{y}_s',\bar{z}_s',\bar{k}_s')-\bar{f}(s,\bar{y}_s,\bar{z}_s,\bar{k}_s,\bar{y}_s',\bar{z}_s',\bar{k}_s')|^2ds\Bigg\}\\&+\frac{1}{2}\mathbb{E}\Big[\displaystyle\sup_{t\in[0,T]}|\widehat{ y}_{t}|^2\Big]+K\mathbb{E}\bigg[\int_{0}^{T}\big(|\widehat{ z}_{s}|^{2}+\|\widehat{k}_{s}\|_{l^2(\mathbb{R}^n)}^{2}\big)ds\bigg].
	\end{aligned}
\end{equation}
Finally, combining \eqref{eq:24} and \eqref{eq:25} leads to the estimate \eqref{eq:2.6}. Then we take $(\bar{\nu},\bar{f})=(0,0)$ to get the estimate \eqref{eq:2.5}. Moreover, for the unique solvability, we can also infer it from the estimate \eqref{eq:2.6} by the method of continuation.
\end{proof}

\section{FBSDELDAs with domination-monotonicity conditions}\label{sec:3}
In this section, we will be committed to studying the FBSDELDA \eqref{eq:1.1}. Similar to the cases of SDEDL \eqref{eq:2.1} and ABSDEL \eqref{eq:2.4}, we still have to make the following assumptions for the coefficients $(\Lambda,\Phi ,\Gamma)$ of FBSDELDA \eqref{eq:1.1}.

\begin{ass}\label{ass:3.1}
(i)For any $x\in \mathbb{R}^n$, $\Phi (x) $ is $\mathcal{F}_{T}$-measurable. Furthermore, for any
$\theta,\theta_{-},\theta_{+} \in \mathbb{R}^{n(2+d)}\times l^2(\mathbb{R}^n)$, $\Gamma (\cdot ,\theta ,\theta_{-},\theta_{+} )$ is $\mathbb{F}$-progressively measurable. Moreover, $(\Lambda(0),\Phi (0),\Gamma (\cdot ,0,0,0) )\in \mathcal{H}[-\delta,T]$; (ii)The mappings $\Phi$, $\Gamma$ are uniformly Lipschitz continuous, i.e., for any $x,\bar{x}\in \mathbb{R}^n$,
$ \theta ,\bar{\theta },\theta_{-},\bar{\theta }_{-},\theta_{+},\bar{\theta}_{+}  \in \mathbb{R}^{n(2+d)}\times l^2(\mathbb{R}^n)$, there exists a constant $L>0$ such that
\begin{equation}
\left\{\begin{aligned}
	&|\Phi (x)-\Phi (\bar{x} )|\le L|x-\bar{x} |,\\
	&|h(t,\theta ,\theta_{-},y_{+},z_{+},k_{+})-h(t,\bar{\theta} ,\bar{ \theta }_{-},\bar{y}_{+},\bar{z}_{+},\bar{k}_{+})|\le L(|\theta -\bar{\theta }|+|\theta_{-}-\bar{\theta }_{-} |+|y_{+}-\bar{y}_{+} |+|z_{+}-\bar{z}_{+} |+\|k_{+}-\bar{k}_{+} \|_{l^2(\mathbb{R}^n)}),\\
	&|f(t,\theta,x_{-},\theta_{+})-f(t,\bar{\theta},\bar{x}_{-},\bar{\theta}_{+})|\le L(|\theta -\bar{\theta }|+|x_{-}-\bar{x}_{-} |+|\theta_{+}-\bar{\theta }_{+} |),
    \end{aligned}\right.\nonumber
\end{equation}
where $h=b,\sigma,g^{(i)}$.
\end{ass}
In addition to the Assumption \ref{ass:3.1} above, we continue to introduce the following domination-monotonicity conditions on the coefficients $(\Lambda,\Phi ,\Gamma)$ for later study.
\begin{ass}\label{ass:3.2}
There exist two constants $\mu\ge 0$, $v\ge0$, a matrix-valued random variable $G\in L_{\mathcal{F}_T }^{\infty }(\Omega ;\mathbb{R}^{\tilde m\times n} )$, and a series of matrix-valued processes $A(\cdot ),\bar{A}(\cdot ), B(\cdot ),\bar{B}(\cdot )\in L_{\mathbb{F} }^{\infty }(0,T;\mathbb{R}^{m\times n} )$, $C(\cdot )
=(C_1(\cdot)),\cdots, C_d(\cdot)),\bar{C}(\cdot )=(\bar C_1(\cdot)),\cdots, \bar C_d(\cdot))\in L_{\mathbb{F} }^{\infty }(0,T;\mathbb{R}^{d\times mn} )$, $
D(\cdot)= (D_j(\cdot)) _{j=1}^{\infty }, \bar D=(\bar{D}_j(\cdot ))_{j=1}^{\infty } \in L_{\mathbb{F} }^{\infty }(0,T;l^2(\mathbb{R}^{m\times n}) )$ (where $\tilde m, m\in\mathbb{N}$ are given and $\bar{A}(t)=\bar{B}(t)=\bar{C}_i(t)=\bar{D}_j(t)=0 (i=1,2,\cdots,d; j=1,2,\cdots,)$  when $t\in[0,\delta]$) such that we have the following conditions:

(i) One of the following two cases holds true. Case $1$: $\mu>0$ and $v=0$. Case $2$: $\mu=0$ and $v>0$.

(ii) (domination condition) For almost all $(t,\omega)\in[0,T]\times\Omega$, and any $x,\bar{x},x_{-},\bar{x}_{-},x_{+},\bar{x}_{+}, y,\bar{y},y_{-},\bar{y}_{-},y_{+},\bar{y}_{+}\in \mathbb{R}^n$, $z,\bar{z},z_{-},\bar{z}_{-},z_{+},\bar{z}_{+}\in \mathbb{R}^{n\times d}$, $k,\bar{k},k_{-},\bar{k}_{-},k_{+},\bar{k}_{+}\in l^2(\mathbb{R}^n)$ (the argument t is suppressed),
\begin{equation}\label{eq:3.1}
	\left\{\begin{aligned}
		&|\Phi (x)-\Phi (\bar{x} )|\le \frac{1}{v}|G\widehat{x} |,\\
		&\int_{0}^{T}\Big(|f(x,y,z,k,x_-,x_+,y_+,z_+,k_+)-f(\bar{x},y,z,k,\bar{x}_{-},\bar{x}_{+},y_{+},z_{+},k_{+})|\Big)dt\\
		&\le \int_{0}^{T}\Big(\frac{1}{v}\big|A\widehat{x} +\mathbb{E}^{\mathcal{F}_t}[\bar{A}_{+}\widehat{x}_{+}]\big|\Big)dt,\\
		&\int_{0}^{T}\Big(|h(x,y,z,k,x_-,y_-,z_-,k_-,y_+,z_+,k_+)-h(x,\bar{y},\bar{z},\bar{k},x_{-},\bar{y}_{-},\bar{z}_{-},\bar{k}_{-},\bar{y}_{+},\bar{z}_{+},\bar{k}_{+})|\Big)dt\\&\le \int_{0}^{T}\bigg(\frac{1}{\mu}\bigg|B\widehat{y}+C\widehat{z}+\sum_{j=1}^{\infty }D_j\widehat{k}^{(j)}+\mathbb{E}^{\mathcal{F}_t}\Big[\bar{B}_{+}\widehat{y}_{+}+\bar{C}_{+}\widehat{z}_{+}+\sum_{j=1}^{\infty }\bar{D}_{j+}\widehat{k}_{+}^{j}\Big]\bigg|\bigg)dt,
	\end{aligned}\right.
\end{equation}
where $h=b, \sigma, g^{(i)}$, and $\widehat{x}=x-\bar{x}$,  $\widehat{y}=y-\bar{y}$, $\widehat{z}=z-\bar{z}$,  $\widehat{k}=k-\bar{k}$ $(i=1,2,3,\cdots)$, etc. $\bar{A}_{+}(\cdot)=\bar{A}(\cdot+\delta)$, $\bar{B}_{+}(\cdot)=\bar{B}(\cdot+\delta)$, etc.

It should be noticed that there is a little abuse of notations in the conditions above, when $\mu=0$ (resp. $v=0$), $1/\mu$ (resp. $1/v$) means $+\infty$. In other words, if $\mu=0$ or $v=0$, the corresponding domination constraints will vanish.

(iii) (monotonicity condition) For almost all $(t,\omega)\in[0,T]\times\Omega$ and any $\theta,\bar{\theta},\theta_{-},\bar{ \theta}_{-},\theta_{+},\bar{ \theta}_{+} \in \mathbb{R}^{n(2+d)}\times l^2(\mathbb{R}^n)$ (the argument t is suppressed),
\begin{equation}\label{eq:3.2}
	\left\{\begin{aligned}
		&\left \langle\Phi (x)-\Phi (\bar{x} ),\widehat{x}\right \rangle\ge v|G\widehat{x}|^2,\\
		&\int_{0}^{T}\Big(\left \langle\Gamma (\theta ,\theta_{-},\theta_{+})-\Gamma (\bar{ \theta} ,\bar{ \theta }_{-},\bar{\theta}_{+}),\widehat{\theta}\right \rangle\Big)dt\\
		&\le \int_{0}^{T}\bigg(-v\big|A\widehat{x} +\mathbb{E}^{\mathcal{F}_t}[\bar{A}_{+}\widehat{x}_{+}]\big|^2-\mu\bigg|B\widehat{y}+C\widehat{z}+\sum_{j=1}^{\infty }D_j\widehat{k}^{(j)}+\mathbb{E}^{\mathcal{F}_t}\Big[\bar{B}_{+}\widehat{y}_{+}+\bar{C}_{+}\widehat{z}_{+}+\sum_{j=1}^{\infty }\bar{D}_{j+}\widehat{k}_{+}^{(j)}\Big]\bigg|^2\bigg)dt,
	\end{aligned}\right.
\end{equation}
where $\widehat{\theta } =\theta -\theta '$ and $\Gamma (t,\theta ,\theta_{-},\theta_{+})$ is given by \eqref{eq:1.2}.            	
\end{ass}
\begin{rmk}\label{rmk:3.1}
(i) In Assumption \ref{ass:3.2}-(ii), the constant $1/\mu$ and $1/v$ can be replaced by $K/\mu$ and $K/v \quad (K>0)$. However, for simplicity, we prefer to omit the constant $K$;\\
(ii) There exists a symmetrical version of Assumption \ref{ass:3.2}-(iii) as follows:

For almost all $(t,\omega)\in[0,T]\times\Omega$ and any $\theta,\bar{\theta},\theta_{-},\bar{ \theta}_{-},\theta_{+},\bar{ \theta}_{+} \in \mathbb{R}^{n(2+d)}\times l^2(\mathbb{R}^n)$ (the argument t is suppressed),
\begin{equation}\label{eq:3.3}
	\left\{\begin{aligned}
		&\left \langle\Phi (x)-\Phi (\bar{x} ),\widehat{x}\right \rangle\le -v|G\widehat{x}|^2,\\
		&\int_{0}^{T}\Big(\left \langle\Gamma (\theta ,\theta_{-},\theta_{+})-\Gamma (\bar{ \theta} ,\bar{ \theta }_{-},\bar{\theta}_{+}),\widehat{\theta}\right \rangle\Big)dt\\
		&\ge \int_{0}^{T}\bigg(v\big|A\widehat{x} +\mathbb{E}^{\mathcal{F}_t}[\bar{A}_{+}\widehat{x}_{+}]\big|^2+\bigg|B\widehat{y}+C\widehat{z}+\sum_{j=1}^{\infty }D_j\widehat{k}^{(j)}+\mathbb{E}^{\mathcal{F}_t}\Big[\bar{B}_{+}\widehat{y}_{+}+\bar{C}_{+}\widehat{z}_{+}+\sum_{j=1}^{\infty }\bar{D}_{j+}\widehat{k}_{+}^{(j)}\Big]\bigg|^2\bigg)dt.
	\end{aligned}\right.
\end{equation}
It is easy to verify the symmetry between the two and we omit the detailed proofs. For similar proofs, we can refer to Yu \cite{yu2022forward}.\\
(iii) In the framework of  Brownian motion only in the diffusion term,  Li, Wang and Wu \cite{li2018indefinite} have studied  a kind of anticipated fully coupled
forward–backward stochastic differential delayed equations, where they introduced  the following monotonicity condition:
\begin{eqnarray}
\begin{aligned}	
		&\int_{0}^{T}\left \langle A (t,x_{t-2\delta}, \lambda_t, \lambda_{t-\delta},
  \lambda_{t+\delta}, x_{t+2\delta},  y_{t+2\delta})-A (t,\bar x_{t-2\delta},  \bar \lambda_t, \bar\lambda_{t-\delta},
 \bar \lambda_{t+\delta}, \bar x_{t+2\delta},\bar y_{t+2\delta}), \lambda_{t}-\bar\lambda_{t}   \right \rangle dt\\
		&\le \int_{0}^{T}\bigg(
-\mu\bigg|B\widehat{y}+D\widehat{z}+\mathbb{E}^{\mathcal{F}_t}\Big[\bar{B}_{+}\widehat{y}_{+}\Big]\bigg|^2\bigg)dt.
  \end{aligned}	
\end{eqnarray}
Compared with this monotonicity condition, our monotonicity condition \eqref{eq:3.2} contains  the terms $ \big|A\widehat{x} +\mathbb{E}^{\mathcal{F}_t}[\bar{A}_{+}\widehat{x}_{+}]\big|^2 $, and $\mathbb{E}^{\mathcal{F}_t}[\bar{D}_{+}\widehat{z}_{+}]$
which will be founded that these conditions are necessary in our application in section 4.

\end{rmk}
For the convenience of later use, we would like to give some notations as follows (the argument $t$ is suppressed):
{\small\begin{equation}\label{eq:6.4}
	\left\{\begin{aligned}
		&P(x)=Ax +\mathbb{E}^{\mathcal{F}_t}[\bar{A}_{+}x_{+}],\\
		&P_{-}(x)=A_{-}x_{-} +\mathbb{E}^{\mathcal{G}_{t}}[\bar{A}x],\\
		&P(\widehat{x})=A\widehat{x} +\mathbb{E}^{\mathcal{F}_t}[\bar{A}_{+}\widehat{x}_{+}],\\
		&P_{-}(\widehat{x})=A_{-}\widehat{x}_{-} +\mathbb{E}^{\mathcal{G}_{t}}[\bar{A}\widehat{x}],\\
		&Q(y,z,k)=By+Cz+\sum_{j=1}^{\infty }D_jk^{(j)}+\mathbb{E}^{\mathcal{F}_t}\Big[\bar{B}_{+}y_{+}+\bar{C}_{+}z_{+}+\sum_{j=1}^{\infty }\bar{D}_{j+}k_{+}^{j}\Big],\\
		&Q_{-}(y,z,k)=B_{-}y_{-}+C_{-}z_{-}+\sum_{j=1}^{\infty }D_{j-}k_{-}^{(j)}+\mathbb{E}^{\mathcal{G}_{t}}\Big[\bar{B}y+\bar{C}z+\sum_{j=1}^{\infty }\bar{D}_{j}k^{j}\Big],\\
		&Q(\widehat{y},\widehat{z},\widehat{k})=B\widehat{y}+C\widehat{z}+\sum_{j=1}^{\infty}D_j\widehat{k}^{(j)}+\mathbb{E}^{\mathcal{F}_t}\Big[\bar{B}_{+}\widehat{y}_{+}+\bar{C}_{+}\widehat{z}_{+}+\sum_{j=1}^{\infty }\bar{D}_{j+}\widehat{k}_{+}^{j}\Big],\\
		&Q_{-}(\widehat{y},\widehat{z},\widehat{k})=B_{-}\widehat{y}_{-}+C_{-}\widehat{z}_{-}+\sum_{j=1}^{\infty}D_{j-}\widehat{k}_{-}^{(j)}+\mathbb{E}^{\mathcal{G}_{t}}\Big[\bar{B}\widehat{y}+\bar{C}\widehat{z}+\sum_{j=1}^{\infty }\bar{D}_{j}\widehat{k}^{j}\Big],
	\end{aligned}\right.
\end{equation}}
where $x_{-}(t)=x(t-\delta)$, $x_{+}(t)=x(t+\delta)$, $A_{-}(t)=A(t-\delta)$, $A_{+}(t)=A(t+\delta)$, etc.

Now, we give the main results of this section.
\begin{thm}\label{thm:3.1}
Let $(\Lambda,\Phi ,\Gamma)$ be a set of coefficients satisfying Assumption \ref{ass:3.1} and Assumption \ref{ass:3.2}. Then FBSDELDA \eqref{eq:1.1} admits a unique solution $\theta (\cdot )\in N_{\mathbb{F} }^2(0 ,T;\mathbb{R}^{n(2+d)}\times l^2(\mathbb{R}^n) )$. Moreover, we have the following estimate:
\begin{equation}\label{eq:3.4}
\mathbb E\bigg[\displaystyle \sup_{t\in [0,T]} |x_t|^2+\displaystyle \sup_{t\in [0,T]} |y_t|^2+\int_{0}^{T}|z_t|^2dt+\int_{0}^{T}\|k_t\|_{l^2(\mathbb{R}^n)}^2dt\bigg ]\le K\mathbb{E}[\mathrm{I}],
\end{equation}
where
\begin{eqnarray}
	\begin{aligned}	
\mathrm{I}=&|\Phi (0)|^2+\int_{0}^{T}|b(t,0,0,0,0,0)|^2 dt+\int_{0}^{T}|\sigma (t,0,0,0,0,0)|^2 dt+\int_{0}^{T}\|g(t,0,0,0,0,0)\|_{l^2(\mathbb{R}^n)}^2 dt\\
&+\int_{0}^{T}|f(t,0,0,0)|^2 dt+\displaystyle \sup_{t\in [-\delta,0]} |\lambda _t|^2 +\displaystyle \sup_{t\in [-\delta,0]}|\mu_t|^2+\int_{-\delta}^{0}|\rho_t|^2dt+\int_{-\delta}^{0}\|\varsigma_t\|_{l^2(\mathbb{R}^n)}^2dt,
    \end{aligned}
\end{eqnarray}
and $K$ is a positive constant depending only on $T$, the Lipschitz constants, $\mu$, $v$ and the bound of all $G$, $A(\cdot)$, $\bar{A}(\cdot)$, $B(\cdot)$, $\bar{B}(\cdot)$, $C(\cdot)$, $\bar{C}(\cdot)$, $D_j(\cdot)$, $\bar{D}_j(\cdot)$ $(j=1,2,\cdots)$, $S(\cdot)$, $\bar{S}(\cdot)$. Furthermore, let $(\bar{\Lambda},\bar{ \Phi },\bar{ \Gamma})$ be another set of coefficients, and $ \bar{ \theta }(\cdot )\in N_{\mathbb{F} }^2(0 ,T  ;\mathbb{R}^{n(2+d)}\times l^2(\mathbb{R}^n) )$ be a solution to FBSDELDA \eqref{eq:1.1} with the coefficients $(\bar{\Lambda},\bar{ \Phi },\bar{ \Gamma})$. Then we continue to assume that $\big(\bar{\Lambda}(\cdot),\bar{ \Phi }(\bar{x}_T ),\bar{ \Gamma}(\cdot ,\bar{\theta }(\cdot ),\bar{\theta }_{-}(\cdot ),\bar{\theta }_{+}(\cdot )\big)\in \mathcal{H}[-\delta,T]$. Then the following estimate holds:
\begin{equation}\label{eq:3.6}
	\mathbb E\bigg[\displaystyle \sup_{t\in [0,T]} |\widehat{x}_t|^2+\displaystyle \sup_{t\in [0,T]} |\widehat{y}_t|^2+\int_{0}^{T}|\widehat{z}_t|^2dt+\int_{0}^{T}\|\widehat{k}_t\|_{l^2(\mathbb{R}^n)}^2dt\bigg ]\le K\mathbb{E}[\widehat{\mathrm{I}}],
\end{equation}
where we denote $\widehat{x}=x-\bar{x}$,  $\widehat{y}=y-\bar{y}$, $\widehat{z}=z-\bar{z}$,  $\widehat{k}=k-\bar{k}$ $(i=1,2,3,\cdots)$, etc, and
\begin{eqnarray}
	\begin{aligned}	
		\widehat{\mathrm{I}}=&|\Phi (\bar{x}_T)-\bar{\Phi} (\bar{x}_T)|^2+\int_{0}^{T}|b(t,\bar{\theta}(t),\bar{\theta}_{-}(t),\bar{y}_{+}(t),\bar{z}_{+}(t),\bar{k}_{+}(t))-\bar{b}(t,\bar{\theta}(t),\bar{\theta}_{-}(t),\bar{y}_{+}(t),\bar{z}_{+}(t),\bar{k}_{+}(t))|^2 dt\\&+\int_{0}^{T}|\sigma(t,\bar{\theta}(t),\bar{\theta}_{-}(t),\bar{y}_{+}(t),\bar{z}_{+}(t),\bar{k}_{+}(t))-\bar{\sigma}(t,\bar{\theta}(t),\bar{\theta}_{-}(t),\bar{y}_{+}(t),\bar{z}_{+}(t),\bar{k}_{+}(t))|^2 dt\\&+\int_{0}^{T}\|g(t,\bar{\theta}(t),\bar{\theta}_{-}(t),\bar{y}_{+}(t),\bar{z}_{+}(t),\bar{k}_{+}(t))-\bar{g}(t,\bar{\theta}(t),\bar{\theta}_{-}(t),\bar{y}_{+}(t),\bar{z}_{+}(t),\bar{k}_{+}(t))\|_{l^2(\mathbb{R}^n)}^2
		\\&+\int_{0}^{T}|f(t,\bar{\theta}(t),\bar{x}_{-}(t),\bar{\theta}_{+}(t))-\bar{f}(t,\bar{\theta}(t),\bar{x}_{-}(t),\bar{\theta}_{+}(t))|^2 dt\\&+\displaystyle \sup_{t\in [-\delta,0]} |\widehat{\lambda}_t|^2 +\displaystyle \sup_{t\in [-\delta,0]}|\widehat{\mu}_t|^2+\int_{-\delta}^{0}|\widehat{\rho}_t|^2dt+\int_{-\delta}^{0}\|\widehat{\varsigma}_t\|_{l^2(\mathbb{R}^n)}^2dt,
	\end{aligned}
\end{eqnarray}
and $K$ is the same constant as in \eqref{eq:3.4}.
\end{thm}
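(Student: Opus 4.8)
The plan is to prove Theorem \ref{thm:3.1} by the method of continuation, adapting Yu's domination--monotonicity scheme \cite{yu2022forward} to the present delayed/advanced setting with L\'evy noise, and using Lemma \ref{lem:2.2} and Lemma \ref{lem:2.3} (and the techniques in their proofs) to handle the forward and backward components. First observe that \eqref{eq:3.4} is the special case of \eqref{eq:3.6} with $(\bar\Lambda,\bar\Phi,\bar\Gamma)=(0,0,0)$ and $\bar\theta(\cdot)\equiv 0$ (which solves the null equation), and that uniqueness follows from \eqref{eq:3.6} with $(\Lambda,\Phi,\Gamma)=(\bar\Lambda,\bar\Phi,\bar\Gamma)$; so it suffices to establish \eqref{eq:3.6} under the a priori assumption that the two solutions exist in $N^2_{\mathbb{F}}(0,T;\mathbb{R}^{n(2+d)}\times l^2(\mathbb{R}^n))$, and then to prove existence.

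\emph{Step 1 (the key energy identity).} Let $\theta,\bar\theta$ solve \eqref{eq:1.1} for $(\Lambda,\Phi,\Gamma)$, $(\bar\Lambda,\bar\Phi,\bar\Gamma)$ and write $\widehat x=x-\bar x$, $\widehat y=y-\bar y$, $\widehat z=z-\bar z$, $\widehat k=k-\bar k$. Apply It\^o's formula to $t\mapsto\langle\widehat x(t),\widehat y(t)\rangle$ on $[0,T]$; since the Teugels martingales are pairwise strongly orthogonal, taking expectations kills the martingale parts and leaves
\begin{equation}
\mathbb{E}\langle\widehat x(T),\widehat y(T)\rangle-\mathbb{E}\langle\widehat\lambda(0),\widehat\mu(0)\rangle=\mathbb{E}\int_0^T\big\langle\Gamma(t,\theta,\theta_{-},\theta_{+})-\bar\Gamma(t,\bar\theta,\bar\theta_{-},\bar\theta_{+}),\ \widehat\theta\big\rangle\,dt,\nonumber
\end{equation}
the integrand being $\langle\widehat f,\widehat x\rangle+\langle\widehat b,\widehat y\rangle+\langle\widehat\sigma,\widehat z\rangle+\sum_i\langle\widehat g^{(i)},\widehat k^{(i)}\rangle$ by \eqref{eq:1.2}. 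Writing $\Gamma(t,\theta,\dots)-\bar\Gamma(t,\bar\theta,\dots)=\big[\Gamma(t,\theta,\dots)-\Gamma(t,\bar\theta,\dots)\big]+\big[\Gamma(t,\bar\theta,\dots)-\bar\Gamma(t,\bar\theta,\dots)\big]$, the monotonicity condition \eqref{eq:3.2} (and its symmetric form \eqref{eq:3.3}, cf. Remark \ref{rmk:3.1}) bounds the time-integral of the first bracket from above by $-\mathbb{E}\int_0^T\big(v|P(\widehat x)|^2+\mu|Q(\widehat y,\widehat z,\widehat k)|^2\big)dt$ with $P,Q$ as in \eqref{eq:6.4}, the monotonicity of $\Phi$ gives $\langle\widehat x(T),\widehat y(T)\rangle\ge v|G\widehat x(T)|^2+\langle\widehat x(T),\Phi(\bar x_T)-\bar\Phi(\bar x_T)\rangle$, and the remaining terms are Lipschitz and hence controlled by $\widehat{\mathrm I}$. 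Applying Young's inequality with a free small parameter $\eps$ to the cross terms yields
\begin{equation}
v\mathbb{E}|G\widehat x(T)|^2+\mathbb{E}\int_0^T\!\big(v|P(\widehat x)|^2+\mu|Q(\widehat y,\widehat z,\widehat k)|^2\big)dt\le K_{\eps}\mathbb{E}[\widehat{\mathrm I}]+\eps\,\mathbb{E}\Big[\sup_{t\in[0,T]}|\widehat x_t|^2+\sup_{t\in[0,T]}|\widehat y_t|^2\Big].\nonumber
\end{equation}
The conditional expectations and the time-advanced/delayed arguments are absorbed by Jensen's inequality together with the changes of variable $s\mapsto s\pm\delta$, exactly as in \eqref{eq:0.4} and \eqref{eq:2.15}--\eqref{eq:2.17}.

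\emph{Step 2 (decoupling and closing the estimate).} In Case $1$ ($\mu>0$, $v=0$), view the forward equation of \eqref{eq:1.1} with the processes $(y,z,k)$ frozen as an SDEDL of the form \eqref{eq:2.1}; its coefficients differ from those of the $\bar x$-equation by a Lipschitz part in $\widehat x,\widehat x_{-}$, a part controlled in $L^2$-in-time by $\widehat{\mathrm I}$, and a part which — by the domination condition \eqref{eq:3.1} — is controlled only in $L^1$-in-time by $\mu^{-1}|Q(\widehat y,\widehat z,\widehat k)|$. Re-running the $|\widehat x|^2$-estimate of Lemma \ref{lem:2.2}, pairing the latter part against $\sup_t|\widehat x_t|$ by Cauchy--Schwarz, and invoking Step 1 gives $\mathbb{E}\sup_t|\widehat x_t|^2\le K\mathbb{E}[\widehat{\mathrm I}]+K\mathbb{E}\int_0^T|Q(\widehat y,\widehat z,\widehat k)|^2dt$. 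Then the backward equation, with $x$ frozen, is an ABSDEL of the form \eqref{eq:2.4} whose terminal value and generator are Lipschitz in $\widehat x,\widehat x_{-}$, so Lemma \ref{lem:2.3} gives $\mathbb{E}[\sup_t|\widehat y_t|^2+\int_0^T(|\widehat z_t|^2+\|\widehat k_t\|_{l^2(\mathbb{R}^n)}^2)dt]\le K\mathbb{E}[\widehat{\mathrm I}]+K\mathbb{E}\sup_t|\widehat x_t|^2$. Feeding these two bounds back into Step 1 and choosing $\eps$ small enough relative to $\mu$ lets one absorb $\mathbb{E}\int_0^T|Q|^2dt$ and the sup-terms, which produces \eqref{eq:3.6}. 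Case $2$ ($\mu=0$, $v>0$) is treated analogously, now using the symmetric monotonicity \eqref{eq:3.3}, the lower bound $v\mathbb{E}|G\widehat x_T|^2+v\mathbb{E}\int_0^T|P(\widehat x_t)|^2dt$ it produces, and the domination of $f$ by $P(\widehat x)$, so as to estimate the backward component first and then the forward one. In all cases the resulting constant $K$ depends only on $T$, $\delta$, the Lipschitz constants, $\mu,v$ and the essential bounds of $G,A,\bar A,B,\bar B,C,\bar C,D_j,\bar D_j$.

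\emph{Step 3 (continuation) and the main obstacle.} For $\alpha\in[0,1]$ introduce a family of FBSDELDAs $(\Lambda,\Phi^{\alpha},\Gamma^{\alpha})$ — with inhomogeneous data allowed to range over $\mathcal{H}[-\delta,T]$ — interpolating between the target system at $\alpha=1$ and, at $\alpha=0$, a system whose forward and backward equations decouple and are therefore uniquely solvable by Lemma \ref{lem:2.2} and Lemma \ref{lem:2.3}. Steps 1--2 apply verbatim to this family and give an a priori estimate with a constant $K$ \emph{independent of} $\alpha$. A standard bridging argument then produces $\eps_0>0$, depending only on $K$, such that unique solvability at $\alpha_0$ implies unique solvability at $\alpha_0+\eps$ for all $\eps\in[0,\eps_0]$: one sets up a map on $N^2_{\mathbb{F}}(0,T;\mathbb{R}^{n(2+d)}\times l^2(\mathbb{R}^n))$ sending $\vartheta$ to the solution of the $\alpha_0$-system fed with the $\eps$-perturbation built from $\vartheta$, and checks via the a priori estimate that it is a contraction; its fixed point solves the $(\alpha_0+\eps)$-system. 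Iterating finitely many times from $\alpha=0$ reaches $\alpha=1$. I expect the main obstacle to lie in Step 2: unlike the classical coupled FBSDE case, the It\^o expansion of $\langle\widehat x,\widehat y\rangle$ generates cross terms from the anticipated arguments $y_{+},z_{+},k_{+}$ of $b,\sigma,g$ and the delayed argument $x_{-}$ of $f$, which do not telescope, and — crucially — the domination condition \eqref{eq:3.1} only controls the dominated directions in $L^1$-in-time whereas the forward/backward estimates naturally want $L^2$-in-time data; reconciling these while keeping $K$ independent of the continuation parameter $\alpha_0$ is the delicate point, and it is what forces the $L^1_t\times\sup_t$ pairing and the careful choice of the absorption parameter $\eps$.
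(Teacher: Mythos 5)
Your proposal is correct and follows essentially the same route as the paper: the a priori estimate is obtained exactly as in the paper's Lemma \ref{lem:3.2} by combining the forward estimate from Lemma \ref{lem:2.2}, the backward estimate from Lemma \ref{lem:2.3}, and the duality identity from It\^{o}'s formula applied to $\langle\widehat{x},\widehat{y}\rangle$ together with the monotonicity and domination conditions, split into the same two cases according to whether $\mu>0$ or $v>0$; existence then follows by the same continuation/contraction argument (the paper's Lemma \ref{lem:3.3}) starting from the decoupled $\alpha=0$ system, and \eqref{eq:3.4} is deduced from \eqref{eq:3.6} by taking the second coefficient set to be zero. The delicate points you flag (time-shifting/Jensen arguments for the delayed and anticipated terms, and the vanishing of $\bar A,\bar B,\bar C,\bar D_j$ on $[0,\delta]$) are precisely the ones the paper handles in \eqref{eq:6.19} and \eqref{eq:6.28}.
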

Next, we are devoted to proving Theorem \ref{thm:3.1}. Due to the symmetry of monotonicity condition \eqref{eq:3.2} and \eqref{eq:3.3}, we only give the detailed proofs under monotonicity condition \eqref{eq:3.2}.

For any $\big(\pi(\cdot) ,\eta ,\rho (\cdot )\big)\in \mathcal{H} [-\delta,T]$ with $\pi(\cdot)=\big(\xi(\cdot)^\top,\vartheta(\cdot)^\top,\tau(\cdot)^\top,\chi(\cdot)^\top\big)^\top$ and $\rho (\cdot )=(\varphi (\cdot )^\top ,\psi (\cdot )^\top,\gamma  (\cdot )^\top,\beta  (\cdot )^\top)^\top$ where $\beta (\cdot )=(\beta ^{(1)}(\cdot)^\top,\beta ^{(2)}(\cdot)^\top,\cdots )^\top$, we continue to introduce a family of FBSDELDAs parameterized by $\alpha\in[0,1]$ as follows:
  \begin{equation}\label{eq:3.8}
	\left\{\begin{aligned}
		dx^{\alpha}(t) =&\big[b^{\alpha}(t,\theta^{\alpha}(t), \theta_{-}^{\alpha}(t),y_{+}^{\alpha}(t),z_{+}^{\alpha}(t),k_{+}^{\alpha}(t))+\psi(t)\big]dt\\&+\big[\sigma^{\alpha}(t,\theta^{\alpha}(t), \theta_{-}^{\alpha}(t),y_{+}^{\alpha}(t),z_{+}^{\alpha}(t),k_{+}^{\alpha}(t))+\gamma(t)\big]d W(t)\\&+\sum_{i=1}^{\infty }\big[g^{(i)\alpha}  (t,\theta^{\alpha}(t-), \theta_{-}^{\alpha}(t-),y_{+}^{\alpha}(t-),z_{+}^{\alpha}(t),k_{+}^{\alpha}(t))+\beta^{(i)}(t)\big]dH^{(i)}(t),\quad t \in[0, T],    \\
		dy^{\alpha}(t)=&\big[f^{\alpha}(t,\theta^{\alpha}(t), x_{-}^{\alpha}(t),\theta_{+}^{\alpha}(t))+\varphi(t)\big]dt+z^{\alpha}(t)d W(t)+\sum_{i=1}^{\infty }k^{(i)\alpha}(t)dH^{(i)}(t), \quad t \in[0, T],\\
		x^{\alpha}(t) =&\lambda^\alpha(t)+\xi(t),\quad y^{\alpha}(t) =\mu^\alpha(t)+\vartheta(t),\\ z^{\alpha}(t) =&\rho^\alpha(t)+\tau(t),\quad k^{\alpha}(t) =\varsigma^\alpha(t)+\chi(t),\quad t \in[-\delta , 0],\\
		y^{\alpha}(T) =&\Phi^{\alpha} (x^{\alpha}(T))+\eta,\\
		x^{\alpha}(t) =&y^{\alpha}(t)=z^{\alpha}(t)=k^{\alpha}(t)=0,\quad t\in(T,T+\delta],
	\end{aligned}\right.
\end{equation}
where $\theta ^\alpha (t)=(x^{\alpha }(t)^{\top},y^{\alpha}(t)^{\top},z^{\alpha }(t)^{\top},k^{\alpha }(t)^{\top})^{\top}$ with  $k^\alpha(t) :=(k^{(1)\alpha }(t)^{\top},k^{(2)\alpha}(t)^{\top},\cdots)^{\top}$, $\theta_{-}^{\alpha}(t)=(x_{-}^{\alpha}(t)^{\top},y_{-}^{\alpha}(t)^{\top},z_{-}^{\alpha} (t)^{\top},\\k_{-}^{\alpha}(t)^{\top})^{\top }$, $\theta_{+}^{\alpha}(t)=(x_{+}^{\alpha}(t)^{\top},y_{+}^{\alpha}(t)^{\top},z_{+}^{\alpha} (t)^{\top},k_{+}^{\alpha}(t)^{\top})^{\top }$, and $\theta^{\alpha}(t-)$, $\theta_{-}^{\alpha}(t-)$ have the similar definition as that in LBSDELDA \eqref{eq:1.1} which is to say that $t-$ only works on $x$, $x_{-}$ and $y$, $y_{-}$. For any $(t,\omega,\theta,\theta_{-},\theta_{+})\in[-\delta,T]\times\Omega\times\big(\mathbb{R}^{n(2+d)}\times l^2(\mathbb{R}^n)\big)\times\big(\mathbb{R}^{n(2+d)}\times l^2(\mathbb{R}^n)\big)\times \big(\mathbb{R}^{n(2+d)}\times l^2(\mathbb{R}^n)\big)$,
\begin{equation}\label{eq:3.9}
	\left\{\begin{aligned}
		&\Phi ^\alpha (x)=\alpha \Phi (x)+(1-\alpha)vG^{\top}Gx,\\
		&\lambda^{\alpha}(t)=\lambda(t),\quad \mu^{\alpha}(t)=\mu(t),\quad \rho^{\alpha}(t)=\rho(t),\quad \varsigma^{\alpha}(t)=\varsigma(t),\\
		&f^\alpha (t,\theta ,x_{-},\theta_{+})=\alpha f(t,\theta ,x_{-},\theta_{+})-(1-\alpha)v\bigg[A(t)^{\top}P(t,x)+\bar{A}(t)^{\top}P_{-}(t,x)\bigg],\\
		&b^\alpha (t,\theta ,\theta_{-},y_{+},z_{+},k_{+})=\alpha b(t,\theta ,\theta_{-},y_{+},z_{+},k_{+})-(1-\alpha)\mu\bigg[ B(t)^\top Q(t,y,z,k) +\bar{B}(t)^{\top}Q_{-}(t,y,z,k)\bigg],\\
		&\sigma ^\alpha (t,\theta ,\theta_{-},y_{+},z_{+},k_{+})=\alpha \sigma (t,\theta ,\theta_{-},y_{+},z_{+},k_{+})-(1-\alpha)\mu \bigg[C(t)^\top Q(t,y,z,k) +\bar{C}(t)^{\top}Q_{-}(t,y,z,k)\bigg],\\
		&g^{(i)\alpha} (t,\theta ,\theta_{-},y_{+},z_{+},k_{+})=\alpha g^{(i)}(t,\theta ,\theta_{-},y_{+},z_{+},k_{+})-(1-\alpha)\mu\bigg[D_{i}(t)^\top Q(t,y,z,k) +\bar{D}_{i}(t)^{\top}Q_{-}(t,y,z,k)\bigg],
	\end{aligned}\right.
\end{equation}
where $P(t,x)$, $P_{-}(t,x)$, $Q(t,y,z,k)$, $Q_{-}(t,y,z,k)$ are defined by \eqref{eq:6.4}.
Similarly, we continue to denote $\Gamma ^\alpha (\cdot ):=(f^\alpha(\cdot )^\top ,b^\alpha(\cdot )^\top ,\sigma ^\alpha(\cdot )^\top ,g^\alpha(\cdot )^\top )^{\top}$ with $g^\alpha(\cdot )^\top :=(g^{(1)\alpha }(\cdot )^\top,g^{(2)\alpha }(\cdot )^\top,\cdots )^\top$.

Without loss of generality, we assume that the Lipschitz constants of the coefficients $(\Phi,\Gamma)$ are larger than
\begin{equation}
\begin{aligned}
\text{max}\left\{\mu,v\right\} \bigg (&|G|+\|A(\cdot)\|_{L_{\mathbb{F} }^{\infty }(0,T;\mathbb{R}^{m\times n} )}+\|\bar{A}(\cdot)\|
_{L_{\mathbb{F} }^{\infty }(0,T;\mathbb{R}^{m\times n} )}+\|B(\cdot)\|_{L_{\mathbb{F} }^{\infty }(0,T;\mathbb{R}^{m\times n} )}
+\|\bar{B}(\cdot)\|_{L_{\mathbb{F} }^{\infty }(0,T;\mathbb{R}^{m\times n} )}\\&+\|C(\cdot)\|_{L_{\mathbb{F} }^{\infty }(0,T;\mathbb{R}^{d\times mn} )}+\|\bar{C}(\cdot)\|_{L_{\mathbb{F} }^{\infty }(0,T;\mathbb{R}^{d\times mn} )}+\sum_{j=1}^{\infty}\|D_j(\cdot)\|_{L_{\mathbb{F} }^{\infty }(0,T;\mathbb{R}^{m\times n} )}+\sum_{j=1}^{\infty}\|\bar{D}_j(\cdot)\|_{L_{\mathbb{F} }^{\infty }(0,T;\mathbb{R}^{m\times n} )}\bigg)^2,\nonumber
\end{aligned}
\end{equation}
and the constant $\mu$ and $v$ in Assumption \ref{ass:3.2}-(i) satisfy the following condition:
\begin{equation}
	\begin{aligned}
	(\frac{1}{\mu })^2,(\frac{1}{v})^2\ge \text{max}\Big\{&|G|,\|A(\cdot)\|_{L_{\mathbb{F} }^{\infty }(0,T;\mathbb{R}^{m\times n} )}, \|\bar{A}(\cdot)\|_{L_{\mathbb{F} }^{\infty }(0,T;\mathbb{R}^{m\times n} )}, \|B(\cdot)\|_{L_{\mathbb{F} }^{\infty }(0,T;\mathbb{R}^{m\times n} )}, \|\bar{B}(\cdot)\|_{L_{\mathbb{F} }^{\infty }(0,T;\mathbb{R}^{m\times n} )},\\ & \|C(\cdot)\|_{L_{\mathbb{F} }^{\infty }(0,T;\mathbb{R}^{d\times mn} )},\|\bar{C}(\cdot)\|_{L_{\mathbb{F} }^{\infty }(0,T;\mathbb{R}^{d\times mn} )},\|D_j(\cdot)\|_{L_{\mathbb{F} }^{\infty }(0,T;\mathbb{R}^{m\times n} )},\|\bar{D}_j(\cdot)\|_{L_{\mathbb{F} }^{\infty }(0,T;\mathbb{R}^{
m\times n} )} \Big\}.\nonumber
    \end{aligned}
\end{equation}
Then we can easily verify that for any $\alpha\in[0,1]$, the new coefficients $(\Lambda^\alpha,\Phi^\alpha,\Gamma^\alpha)$ also satisfy Assumption \ref{ass:3.1} and Assumption \ref{ass:3.2}  with the same Lipschitz constants, $\mu$, $v$, $G$, $A(\cdot)$, $\bar{A}(\cdot)$, $B(\cdot)$, $\bar{B}(\cdot)$, $C(\cdot)$, $\bar{C}(\cdot)$, $D_j(\cdot)$ $\bar{D}_j(\cdot)$ $(j=1,2,\cdots)$ as the original coefficients $(\Lambda,\Phi ,\Gamma)$.

Obviously, when $\alpha=0$, FBSDELDA \eqref{eq:3.8} can be rewritten in the following form:
\begin{equation}\label{eq:3.10}
	\left\{\begin{aligned}
		dx^0(t)=&\bigg \{-\mu \Big[B(t)^\top  Q(t,y_t^0,z_t^0,k_t^0)+\bar{B}(t)^{\top} Q_{-}(t,y_t^0,z_t^0,k_t^0)\Big]+\psi(t)\bigg \}dt
		\\&+\bigg \{-\mu\Big[ C(t)^\top Q(t,y_t^0,z_t^0,k_t^0)+\bar{C}(t)^{\top} Q_{-}(t,y_t^0,z_t^0,k_t^0)\Big]+\gamma(t)\bigg \}dW(t)
		\\&+\sum_{i=1}^{\infty} \bigg \{-\mu \Big[D_i(t)^\top Q(t,y_{t-}^0,z_t^0,k_t^0)+ \bar{D}_i(t)^{\top}Q_{-}(t,y_{t-}^0,z_t^0,k_t^0)\Big]+\beta^{(i)}(t)\bigg \}dH^{(i)}(t),\\
		dy^0(t)=&\bigg(-v\Big[A(t)^{\top}P(t,x_t^0)+\bar{A}(t)^{\top}P_{-}(t,x_t^0)\Big]+\varphi(t)\bigg)dt\\&+z^0(t)d W(t)+\sum_{i=1}^{\infty }k ^{(i)0}(t)dH^{(i)}(t),\quad t\in [0,T],\\
		x^{0}(t) =&\lambda^0(t)+\xi(t),\quad y^{0}(t) =\mu^0(t)+\vartheta(t),\quad
		z^{0}(t) =\rho^0(t)+\tau(t),\quad k^{0}(t) =\varsigma^0(t)+\chi(t),\quad t \in[-\delta , 0],\\
		y^0(T)=&vG^{\top}Gx^0(T)+\eta,\quad
		x^{0}(t)=y^{0}(t)=z^{0}(t)=k^{0}(t)=0,\quad t\in(T,T+\delta].
	\end{aligned}\right.
\end{equation}
In fact, we can easily find that FBSDELDA \eqref{eq:3.10} is in a decoupled form. In detail, when Assumption \ref{ass:3.2}-(i)-Case 1 holds (i.e., $\mu>0$ and $v=0$), we can firstly solve $(y^0(\cdot),z^0(\cdot),k^0(\cdot))$ from the backward equation, then substitute $(y^0(\cdot),z^0(\cdot),k^0(\cdot))$ into the forward equation and solve $x^0(\cdot)$. Similarly, when Assumption \ref{ass:3.2}-(i)-Case 2 holds (i.e., $\mu=0$ and $v>0$), we can firstly solve the forward equation and then the backward equation. In short, under Assumption \ref{ass:3.1} and Assumption \ref{ass:3.2}, FBSDELDA \eqref{eq:3.10} admits a unique solution $\theta^0 (\cdot )\in N_{\mathbb{F} }^2(0 ,T;\mathbb{R}^{n(2+d)}\times l^2(\mathbb{R}^n) )$.

It is clear that when $\alpha=1$ and $(\pi(\cdot) ,\eta ,\rho (\cdot ))$ vanish, FBSDELDA \eqref{eq:3.8} and FBSDELDA \eqref{eq:1.1} are identical.  Next, we will illustrate that if for some $\alpha_{0}\in[0,1)$, FBSDELDA \eqref{eq:3.8} is uniquely solvable for any $\big(\pi(\cdot) ,\eta ,\rho (\cdot )\big)\in \mathcal{H} [-\delta,T]$, then there exists a fixed step length $\delta_0>0$ such that the same conclusion still holds for any $\alpha\in[\alpha_{0},\alpha_{0}+\delta_0]$. As long as this has been proved to be true, we can gradually increase the parameter $\alpha$ until $\alpha=1$. This method is called the method of continuation which is introduced initially by Hu and Peng \cite{hu1995solution}.

For this goal, we shall first establish a priori estimate for the solution of FBSDELDA \eqref{eq:3.8} which plays an improtant role in the subsequent proofs.
\begin{lem}\label{lem:3.2}
Let the given coefficients $(\Lambda,\Phi ,\Gamma)$ satisfy Assumption \ref{ass:3.1} and Assumption \ref{ass:3.2}. Let $\alpha\in[0,1]$, $\big(\pi(\cdot) ,\eta ,\rho (\cdot )\big)$, $\big(\bar{\pi}(\cdot) ,\bar{\eta} ,\bar{\rho} (\cdot )\big)\in \mathcal{H} [-\delta,T]$. Assume that $\theta(\cdot )\in N_{\mathbb{F} }^2(0  ,T;\mathbb{R}^{n(2+d)} \times l^2(\mathbb{R}^n))$ is the solution of FBSDELDA \eqref{eq:3.8} with the coefficients $(\Lambda^\alpha+\pi,\Phi^\alpha+\eta ,\Gamma^\alpha+\rho)$ and $\bar{\theta}(\cdot )\in N_{\mathbb{F} }^2(0 ,T;\mathbb{R}^{n(2+d)} \times l^2(\mathbb{R}^n))$ is also the solution to FBSDELDA \eqref{eq:3.8} with the coefficients $(\Lambda^\alpha+\bar{\pi},\Phi^\alpha+\bar{\eta} ,\Gamma^\alpha+\bar{\rho})$. Then the following estimate holds:
\begin{equation}\label{eq:3.11}
	\mathbb E\bigg[\displaystyle \sup_{t\in [0,T]} |\widehat{x}_t|^2+\displaystyle \sup_{t\in [0,T]} |\widehat{y}_t|^2+\int_{0}^{T}|\widehat{z}_t|^2dt+\int_{0}^{T}\|\widehat{k}_t\|_{l^2(\mathbb{R}^n)}^2dt\bigg ]\le K\mathbb{E}[\widehat{\mathrm{J}}],
\end{equation}
where we denote
\begin{equation}\label{eq:3.12}
	\begin{aligned}
        \widehat{\mathrm{J} }=&|\widehat{\eta } |^2+\int_{0}^{T}|\widehat{\varphi }_t |^2dt+\int_{0}^{T}|\widehat{\psi }_t |^2dt+\int_{0}^{T}|\widehat{\gamma}_t |^2dt+\int_{0}^{T}\|\widehat{\beta  }_t \|_{l^2(\mathbb{R}^n)}^2dt\\
        &+\displaystyle \sup_{t\in [-\delta,0]} |\widehat{\xi}_t|^2+\displaystyle \sup_{t\in [-\delta,0]}|\widehat{\vartheta} _t|^2+\int_{-\delta}^{0}|\widehat{\tau} _t|^2dt+\int_{-\delta}^{0}\|\widehat{\chi} _t\|_{l^2(\mathbb{R}^n)}^2dt ,
 	\end{aligned}
\end{equation}
and $\widehat{\xi }=\xi -\bar{\xi }$, $\widehat{\varphi }=\varphi -\bar{\varphi }$, etc. Here $K$ is a positive constant that satisfies the same conditions as the constant $K$ in Theorem \ref{thm:3.1}.
\end{lem}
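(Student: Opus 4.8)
The plan is to exploit the monotonicity structure of the coefficients by applying It\^o's formula to the bilinear form $\langle\widehat x_t,\widehat y_t\rangle$ and integrating over $[0,T]$, which is the standard device behind the method of continuation. Write $\widehat\Gamma$, $\widehat\Phi$, $\widehat\eta$, $\widehat\pi$, $\widehat\rho$ for the differences of the corresponding quantities for the two solutions $\theta(\cdot)$ and $\bar\theta(\cdot)$, and recall that both solve \eqref{eq:3.8} with the \emph{same} $\alpha$ and coefficients $(\Lambda^\alpha,\Phi^\alpha,\Gamma^\alpha)$, differing only in the inhomogeneous data $(\pi,\eta,\rho)$ vs.\ $(\bar\pi,\bar\eta,\bar\rho)$. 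First I would compute $d\langle\widehat x_t,\widehat y_t\rangle$ by It\^o's product rule (taking care of the jump terms from the Teugels martingales, using their orthogonality $\langle H^{(i)},H^{(j)}\rangle=\delta_{ij}t$), integrate from $0$ to $T$, and take expectations; the boundary terms produce $\mathbb E\langle\widehat x_T,\widehat\Phi(\bar x_T)+\widehat\eta\rangle - \mathbb E\langle\widehat\lambda_0+\widehat\xi_0,\widehat\mu_0+\widehat\vartheta_0\rangle$, and the integral term produces $\mathbb E\int_0^T\langle\Gamma^\alpha(\theta)-\Gamma^\alpha(\bar\theta),\widehat\theta\rangle\,dt$ plus cross terms with $\widehat\pi,\widehat\rho$.

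The key algebraic point is that, because $(\Lambda^\alpha,\Phi^\alpha,\Gamma^\alpha)$ satisfies Assumption~\ref{ass:3.2} with the same $G,A,\bar A,B,\bar B,C,\bar C,D,\bar D$ and the same $\mu,v$ (this was verified in the text just before the lemma), the monotonicity inequality \eqref{eq:3.2} combined with the terminal monotonicity gives, after using the delay/advancement identities of the type \eqref{eq:2.15}--\eqref{eq:2.17} to handle the anticipative expectations $\mathbb E^{\mathcal F_t}[\cdot]$ and the time-shift of $\bar A_+,\bar B_+$ etc., a bound of the form
\begin{equation}
v\,\mathbb E\!\int_0^T\!|P(\widehat x)|^2dt+\mu\,\mathbb E\!\int_0^T\!|Q(\widehat y,\widehat z,\widehat k)|^2dt+v\,\mathbb E|G\widehat x_T|^2\le K\,\mathbb E[\widehat{\mathrm J}]^{1/2}\Big(\text{norms of }\widehat x,\widehat y,\widehat z,\widehat k\Big)^{1/2}+\ (\text{lower order}).\nonumber
\end{equation}
In Case~1 ($\mu>0$, $v=0$) this controls $\mathbb E\int_0^T|Q(\widehat y,\widehat z,\widehat k)|^2dt$; in Case~2 ($v>0$, $\mu=0$) it controls $\mathbb E\int_0^T|P(\widehat x)|^2dt$ and $\mathbb E|G\widehat x_T|^2$. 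These are exactly the quantities that, through the domination condition \eqref{eq:3.1}, dominate the increments $\widehat\Phi$ and the relevant increments of $f$ (Case~2) or of $b,\sigma,g$ (Case~1).

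The remaining step is a decoupling argument: in Case~1, feed the control on $Q(\widehat y,\widehat z,\widehat k)$ into the forward SDEDL for $\widehat x$ and apply the estimate \eqref{eq:2.3} of Lemma~\ref{lem:2.2} to bound $\mathbb E[\sup_t|\widehat x_t|^2]$ in terms of $\widehat{\mathrm J}$ and $\mathbb E\int_0^T|Q(\widehat y,\widehat z,\widehat k)|^2dt$; then feed $\widehat x$ into the backward ABSDEL for $(\widehat y,\widehat z,\widehat k)$ and apply the estimate \eqref{eq:2.6} of Lemma~\ref{lem:2.3}, using the domination of the driver increment by $Q$; finally absorb the $\mathbb E\int_0^T|Q|^2dt$ term, which has already been bounded by $\mathbb E[\widehat{\mathrm J}]$ times a small constant coming from the $2ab\le\epsilon a^2+\epsilon^{-1}b^2$ splitting, onto the left-hand side. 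Case~2 is symmetric, solving forward then backward. The main obstacle I anticipate is bookkeeping the anticipative conditional-expectation terms: one must repeatedly convert $\mathbb E\int_0^T\langle\cdot,\mathbb E^{\mathcal F_t}[\bar A_+\widehat x_+]\rangle dt$-type expressions via the tower property and the time shift $s\mapsto s+\delta$ into honest $[0,T]$ integrals (exploiting that $\bar A,\bar B,\bar C,\bar D$ vanish on $[0,\delta]$ and $\widehat x,\widehat y,\widehat z,\widehat k$ vanish on $(T,T+\delta]$), and to verify that the cross terms coming from the It\^o formula on $\langle\widehat x_t,\widehat y_t\rangle$ combine precisely with the $P$ and $Q$ quadratic forms appearing in \eqref{eq:3.2}; this is where the specific structure of $\Gamma^\alpha$ in \eqref{eq:3.9} (the choice $-(1-\alpha)\mu[B^\top Q+\bar B^\top Q_-]$ etc.) is essential, since the $(1-\alpha)$-part contributes $-(1-\alpha)\mu|Q(\widehat y,\widehat z,\widehat k)|^2$ (after the shift identities) to the monotonicity inequality with the correct sign. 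Once the a priori estimate \eqref{eq:3.11} is in hand, Gronwall and the Burkholder--Davis--Gundy inequality upgrade the pointwise-in-$t$ bounds to the $\sup_t$ bounds as in the proofs of Lemmas~\ref{lem:2.2} and \ref{lem:2.3}.
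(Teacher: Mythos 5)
Your proposal is correct and follows essentially the same route as the paper's proof: It\^o's formula applied to $\langle\widehat x_t,\widehat y_t\rangle$ combined with the monotonicity condition to control $v\mathbb E|G\widehat x_T|^2+v\mathbb E\int_0^T|P(\widehat x)|^2dt+\mu\mathbb E\int_0^T|Q(\widehat y,\widehat z,\widehat k)|^2dt$ by the cross terms, then in each case the estimates of Lemmas \ref{lem:2.2} and \ref{lem:2.3} together with the domination conditions and the time-shift identities (using that $\bar A,\bar B,\bar C,\bar D_j$ vanish on $[0,\delta]$), and a final $\varepsilon$-Young absorption. The only imprecision is in Case 1, where the backward driver increment is handled by the Lipschitz condition in $\widehat x$ (the $P$-domination being vacuous since $v=0$) rather than by any domination by $Q$; this does not affect the validity of the argument.
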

\begin{proof}
In the following proofs, the argument $t$ is suppressed for simplicity. Besides, it should be noted that the positive constant $K$ could be changed line to line.

By the estimate \eqref{eq:2.3} in Lemma \ref{lem:2.2}, we have
\begin{equation}\label{eq:3.13}
	\begin{aligned}
		\mathbb{E}\bigg[\displaystyle \sup_{t\in [ 0,T]} |\widehat{x}|^2\bigg ]&\le K\mathbb {E}\Bigg \{
		\int_{0}^{T}\bigg |\alpha \big (b(\bar{x},y,z,k,\bar{x}_{-},y_{-},z_{-},k_{-},y_{+},z_{+},k_{+} )-b(\bar{\theta},\bar{\theta}_{-},\bar{y}_{+},\bar{z}_{+},\bar{k}_{+})\big )\\
		&\qquad-(1-\alpha)\mu\Big[ B^\top Q(\widehat{y},\widehat{z},\widehat{k})+\bar{B}^\top Q_{-}(\widehat{y},\widehat{z},\widehat{k})\Big]+\widehat{\psi} \bigg |^2dt\\
		&\quad+\int_{0}^{T}\Bigg |\alpha \big (\sigma(\bar{x},y,z,k,\bar{x}_{-},y_{-},z_{-},k_{-},y_{+},z_{+},k_{+} )-\sigma(\bar{\theta},\bar{\theta}_{-},\bar{y}_{+},\bar{z}_{+},\bar{k}_{+})\big )\\
		&\qquad-(1-\alpha)\mu\Big[C^\top Q(\widehat{y},\widehat{z},\widehat{k})+\bar{C}^\top Q_{-}(\widehat{y},\widehat{z},\widehat{k})\Big]+\widehat{\gamma} \Bigg |^2dt\\
		&\quad+ \int_{0}^{T} \Bigg \|\alpha \big (g(\bar{x},y,z,k,\bar{x}_{-},y_{-},z_{-},k_{-},y_{+},z_{+},k_{+} )-g(\bar{\theta},\bar{\theta}_{-},\bar{y}_{+},\bar{z}_{+},\bar{k}_{+})\big )\\
		&\qquad-(1-\alpha)\mu \Big[ D_{i}^\top Q(\widehat{y},\widehat{z},\widehat{k})+\bar{D}_{i}^\top Q_{-}(\widehat{y},\widehat{z},\widehat{k})\Big]+\widehat{\beta} \Bigg \|_{l^2(\mathbb{R}^n)}^2dt +\displaystyle \sup_{t\in [-\delta,0]} |\widehat{\xi}|^2\Bigg\}.
	\end{aligned}
\end{equation}
Similarly, by applying the estimate \eqref{eq:2.6} in Lemma \ref{lem:2.3}, we can derive
\begin{equation}\label{eq:3.14}
	\begin{aligned}
		&\quad\mathbb E\bigg[\displaystyle \sup_{t\in [0,T]} |\widehat{y}|^2+\int_{0}^{T}|\widehat{z}|^2dt+\int_{0}^{T}\|\widehat{k}\|_{l^2(\mathbb{R}^n)}^2dt\bigg ]\\
		&\le K\mathbb E\Bigg\{\Big |\alpha \big (\Phi (x_T)-\Phi (\bar{x }_T )\big )+(1-\alpha )vG^\top G\hat{x}_T+\widehat{\eta }  \Big |^2\\
		&~~~+\int_{0}^{T}\bigg |\alpha \big (f(x,\bar{y},\bar{z},\bar{k},x_{-},x_{+},\bar{y}_{+},\bar{z}_{+},\bar{k}_{+})-f(\bar{\theta},\bar{x}_{-},\bar{\theta}_{+})\big )\\
		&~~~~~~-(1-\alpha )v\Big[A^\top P(\widehat{x})+\bar{A}^\top P_{-}(\widehat{x})\Big]+\widehat{\varphi }\bigg |^2dt\Bigg\}.
	\end{aligned}
\end{equation}
Furthermore, applying It\^{o }'s formula to $\left \langle \widehat{x}(\cdot ),\widehat{y}(\cdot )  \right \rangle$ yields
\begin{equation}\label{eq:3.15}
	\begin{aligned}
		&\quad\mathbb{E}\Bigg \{(1-\alpha)v|G\widehat{x}_T |^2
		+\alpha \left \langle \Phi (x_T)-\Phi (\bar{x}_T),\widehat{x}_T  \right \rangle+(1-\alpha )\mu \int_{0}^{T}\big |Q(\widehat{y},\widehat{z},\widehat{k})\big |^2dt\\
		&\qquad+(1-\alpha )v\int_{0}^{T}\big|P(\widehat{x})\big|^2dt-\alpha \int_{0}^{T}\left \langle \Gamma (\theta ,\theta_{-},\theta_{+})-\Gamma(t,\bar{\theta} ,\bar{\theta}_{-},\bar{\theta}_{+}) ,\widehat{\theta} \right \rangle dt\Bigg \}\\
		&=\mathbb{E}\Bigg \{-\left \langle \widehat{\eta},\widehat{x}_T  \right \rangle +\left \langle \widehat{\xi }_0,\widehat{\vartheta}_0  \right \rangle+\int_{0}^{T}\Big [\left \langle \widehat{\varphi},\widehat{x}\right \rangle +\left \langle \widehat{\psi},\widehat{y}\right \rangle + \left \langle \widehat{\gamma },\widehat{z}  \right \rangle +\left \langle \widehat{\beta },\widehat{k}  \right \rangle \Big ]dt\Bigg \},
	\end{aligned}
\end{equation}
where the assumption that $\bar{A}=\bar{B}=\bar{C}=\bar{D}_j=0$, when $t\in[0,\delta]$ has been used.
Therefore, combining with the monotonicity conditions in Assumption \ref{ass:3.2}-(iii), \eqref{eq:3.15} is reduced to
\begin{equation}\label{eq:3.16}
	\begin{aligned}
		&\quad\mathbb{E}\Bigg \{ v|G\widehat{x}_T |^2+v\int_{0}^{T}\big|P(\widehat{x})\big|^2dt+\mu \int_{0}^{T}\big |Q(\widehat{y},\widehat{z},\widehat{k})\big |^2dt\Bigg \}\\
		&\le \mathbb{E}\Bigg \{-\left \langle \widehat{\eta},\widehat{x}_T  \right \rangle +\left \langle \widehat{\xi }_0,\widehat{\vartheta}_0  \right \rangle+\int_{0}^{T}\Big [\left \langle \widehat{\varphi},\widehat{x}\right \rangle +\left \langle \widehat{\psi},\widehat{y}\right \rangle + \left \langle \widehat{\gamma },\widehat{z}  \right \rangle +\left \langle \widehat{\beta },\widehat{k}  \right \rangle \Big ]dt\Bigg \},
	\end{aligned}
\end{equation}

The following proofs will be divided into two cases according to Assumption \ref{ass:3.2}-(i).

\textbf{Case 1}: $\mu>0$ and $v=0$. By applying the domination conditions \eqref{eq:3.1} in Assumption \ref{ass:3.2}-(ii) to the estimate \eqref{eq:3.13}, we get
\begin{equation}\label{eq:3.17}
	\begin{aligned}
	\mathbb E\bigg[\displaystyle \sup_{t\in [0,T]} |\widehat{x}|^2\bigg ]&\le K\mathbb E\Bigg \{\int_{0}^{T}|\widehat{\psi } |^2dt+\int_{0}^{T}|\widehat{\gamma} |^2dt+\int_{0}^{T}\|\widehat{\beta} \|_{l^2(\mathbb{R}^n)}^2dt+\displaystyle \sup_{t\in [-\delta,0]} |\widehat{\xi } |^2\\&\quad+\int_{0}^{T}\big |Q(\widehat{y},\widehat{z},\widehat{k})\big |^2dt+\int_{0}^{T}\big |Q_{-}(\widehat{y},\widehat{z},\widehat{k})\big |^2dt\Bigg \}.
	\end{aligned}
\end{equation}
By the time-shifting transformation and the property that $\bar{B}=\bar{C}=\bar{D}_j=0$, when $t\in[0,\delta]$, we derive that
\begin{equation}\label{eq:6.19}
	\begin{aligned}
		&\quad\int_{0}^{T}\big |Q_{-}(\widehat{y},\widehat{z},\widehat{k})\big |^2dt\\&=\int_{0}^{\delta}\big |Q_{-}(\widehat{y},\widehat{z},\widehat{k})\big |^2dt+\int_{\delta}^{T}\big |Q_{-}(\widehat{y},\widehat{z},\widehat{k})\big |^2dt\\&=\int_{-\delta}^{0}\big |B\widehat{y}+C\widehat{z}+\sum_{j=1}^{\infty}D_{j}\widehat{k}^{(j)}\big |^2dt+\int_{0}^{T-\delta}\big |Q(\widehat{y},\widehat{z},\widehat{k})\big |^2dt
		\\&\le K\bigg\{\displaystyle \sup_{t\in[-\delta,0]}|\widehat{\vartheta} |^2+\int_{-\delta}^{0}|\widehat{\tau}|^2dt+\int_{-\delta}^{0}\|\widehat{\chi}\|_{l^2(\mathbb{R}^n)}^2dt+\int_{0}^{T}\big |Q(\widehat{y},\widehat{z},\widehat{k})\big |^2dt\biggr\}.
	\end{aligned}
\end{equation}
Then by substituting \eqref{eq:6.19} into \eqref{eq:3.17}, the estimate \eqref{eq:3.17} can be rewritten as follows:
\begin{equation}\label{eq:6.20}
 	\begin{aligned}
 		\mathbb E\bigg[\displaystyle \sup_{t\in [0,T]} |\widehat{x}|^2\bigg ]&\le K\mathbb E\Bigg \{\int_{0}^{T}|\widehat{\psi } |^2dt+\int_{0}^{T}|\widehat{\gamma} |^2dt+\int_{0}^{T}\|\widehat{\beta} \|_{l^2(\mathbb{R}^n)}^2dt+\int_{0}^{T}\big |Q(\widehat{y},\widehat{z},\widehat{k})\big |^2dt\\&\qquad
 +\displaystyle \sup_{t\in [-\delta,0]} |\widehat{\xi } |^2
 +\displaystyle \sup_{t\in[-\delta,0]}|\widehat{\vartheta} |^2+\int_{-\delta}^{0}|\widehat{\tau}|^2dt+\int_{-\delta}^{0}\|\widehat{\chi}\|_{l^2(\mathbb{R}^n)}^2dt\Bigg \}.
 	\end{aligned}
\end{equation}
Applying the Lipschitz condition and the time-shifting transformation to the estimate \eqref{eq:3.14} leads to
\begin{equation}\label{eq:6.22}
	\begin{aligned}
\mathbb E\bigg[\displaystyle \sup_{t\in [0,T]} |\widehat{y}|^2+\int_{0}^{T}|\widehat{z}|^2dt+\int_{0}^{T}\|\widehat{k}\|_{l^2(\mathbb{R}^n)}^2dt\bigg ]\le K\mathbb{E}\bigg \{|\widehat{\eta}|^2+\int_{0}^{T}|\widehat{\varphi}|^2dt+\displaystyle \sup_{t\in [0,T]} |\widehat{x} |^2+\displaystyle \sup_{t\in [-\delta,0]} |\widehat{\xi}|^2\bigg\}.
	\end{aligned}
\end{equation}
Hence, combining \eqref{eq:6.20} and \eqref{eq:6.22} yields
\begin{equation}\label{eq:3.19}
\mathbb E\bigg[\displaystyle \sup_{t\in [0,T]} |\widehat{x}|^2+\displaystyle \sup_{t\in [0,T]} |\widehat{y}|^2+\int_{0}^{T}|\widehat{z}|^2dt+\int_{0}^{T}\|\widehat{k}\|_{l^2(\mathbb{R}^n)}^2dt\bigg ]\le K\mathbb{E}\Bigg \{\widehat{\mathrm{J}}+\int_{0}^{T}\big |Q(\widehat{y},\widehat{z},\widehat{k})\big |^2dt\Bigg \},	
\end{equation}
where $\widehat{\mathrm{J}}$ is defined by \eqref{eq:3.12}. Finally, we continue to combine \eqref{eq:3.16} and \eqref{eq:3.19} and use the inequality $ab\le\frac{1}{4\varepsilon}a^2+\epsilon b^2$ to get
\begin{equation}\label{eq:3.20}
	\begin{aligned}
		&\quad\mathbb E\bigg[\displaystyle \sup_{t\in [0,T]} |\widehat{x}|^2+\displaystyle \sup_{t\in [0,T]} |\widehat{y}|^2+\int_{0}^{T}|\widehat{z}|^2dt+\int_{0}^{T}\|\widehat{k}\|_{l^2(\mathbb{R}^n)}^2dt\bigg ]\\
		&\le K\mathbb{E}\Bigg \{\widehat{\mathrm{J}}-\left \langle \widehat{\eta},\widehat{x}_T  \right \rangle +\left \langle \widehat{\xi }_0,\widehat{\vartheta}_0  \right \rangle+\int_{0}^{T}\Big [\left \langle \widehat{\varphi},\widehat{x}\right \rangle +\left \langle \widehat{\psi},\widehat{y}\right \rangle + \left \langle \widehat{\gamma },\widehat{z}  \right \rangle +\left \langle \widehat{\beta },\widehat{k}  \right \rangle \Big ]dt\Bigg \}\\
		&\le K\mathbb{E}\Bigg \{\widehat{\mathrm{J}}+|\widehat{\eta}|\Big (\displaystyle \sup_{t\in [0,T]} |\widehat{x}|\Big )+\Big (\displaystyle \sup_{t\in [-\delta,0]} |\widehat{\xi}|\Big )\Big (\displaystyle \sup_{t\in [-\delta,0]} |\widehat{\vartheta}|\Big )+\bigg (\int_{0}^{T}|\widehat{\varphi}|dt\bigg )\Big (\displaystyle \sup_{t\in [0,T]} |\widehat{x}|\Big )\\
		&\qquad +\bigg (\int_{0}^{T}|\widehat{\psi}|dt\bigg )\Big (\displaystyle \sup_{t\in [0,T]} |\widehat{y}|\Big )+\int_{0}^{T}|\widehat{\gamma}||\widehat{z}|dt+\int_{0}^{T}\|\widehat{\beta}\|_{l^2(\mathbb{R}^n)}\|\widehat{k}\|_{l^2(\mathbb{R}^n)}dt\Bigg\}\\
		&\le K\mathbb{E}\Bigg \{\widehat{\mathrm{J}}+2\varepsilon\bigg [ \displaystyle \sup_{t\in [0,T]} |\widehat{x}|^2+\displaystyle \sup_{t\in [0,T]} |\widehat{y}|^2+\int_{0}^{T}|\widehat{z}|^2dt+\int_{0}^{T}\|\widehat{k}\|_{l^2(\mathbb{R}^n)}^2dt\bigg ]\Bigg\}.
	\end{aligned}
\end{equation}

By taking $\varepsilon$ small enough such that $2K\varepsilon<1$, we can easily obtain the desired estimate \eqref{eq:3.11} and the proof in this case is finished.

\textbf{Case 2}: $\mu=0$ and $v>0$. Differently, we apply the Lipschitz conditions and the time-shifting transformation to the estimate \eqref{eq:3.13} to get
\begin{equation}\label{eq:6.26}
	\begin{aligned}
		\mathbb E\bigg[\displaystyle \sup_{t\in [ 0,T]} |\widehat{x}|^2\bigg ]\le& K\mathbb{E}\Bigg \{\displaystyle \sup_{t\in [0,T]} |\widehat{y}|^2+\int_{0}^{T}|\widehat{z}|^2dt+\int_{0}^{T}\|\widehat{k}\|_{l^2(\mathbb{R}^n)}^2dt+\displaystyle \sup_{t\in[-\delta,0]}|\widehat{\xi}|^2+\displaystyle \sup_{t\in[-\delta,0]}|\widehat{\vartheta} |^2+\int_{-\delta}^{0}|\widehat{\tau}|^2dt\\&+\int_{-\delta}^{0}\|\widehat{\chi}\|_{l^2(\mathbb{R}^n)}^2dt
		+\int_{0}^{T}|\widehat{\psi } |^2dt+\int_{0}^{T}|\widehat{\gamma} |^2dt+\int_{0}^{T}\|\widehat{\beta} \|_{l^2(\mathbb{R}^n)}^2dt\Bigg \}.
	\end{aligned}
\end{equation}
By the domination conditions \eqref{eq:3.1} in Assumption \ref{ass:3.2}-(ii), we deduce from \eqref{eq:3.14} and obtain
\begin{equation}\label{eq:3.22}
	\begin{aligned}
		&\quad\mathbb E\bigg[\displaystyle \sup_{t\in [0,T]} |\widehat{y}|^2+\int_{0}^{T}|\widehat{z}|^2dt+\int_{0}^{T}\|\widehat{k}\|_{l^2(\mathbb{R}^n)}^2dt\bigg ]\\
		&\le K\mathbb{E}\bigg \{|G\widehat{x}_T|^2+\int_{0}^{T}|P(\widehat{x})|^2dt+\int_{0}^{T}|P_{-}(\widehat{x})|^2dt+|\widehat{\eta} |^2+\int_{0}^{T}|\widehat{\varphi}|^2dt
		 \Bigg \}.
	\end{aligned}
\end{equation}
Using the the time-shifting transformation and the property that $\bar{A}=0$, when $t\in[0,\delta]$, we have
\begin{equation}\label{eq:6.28}
	\begin{aligned}
		\int_{0}^{T}|P_{-}(\widehat{x})|^2dt&=\int_{0}^{\delta}|P_{-}(\widehat{x})|^2dt+\int_{\delta}^{T}|P_{-}(\widehat{x})|^2dt=\int_{-\delta}^{0}|Ax|^2dt+\int_{0}^{T-\delta}|P(\widehat{x})|^2dt\\&\le K\bigg\{\displaystyle \sup_{t\in [-\delta,0]}|\widehat{\xi}|^2+\int_{0}^{T}|P(\widehat{x})|^2dt\bigg\}.
	\end{aligned}
\end{equation}
Then putting \eqref{eq:6.28} into \eqref{eq:3.22} yields
\begin{equation}\label{eq:6.29}
	\begin{aligned}
		&\quad\mathbb E\bigg[\displaystyle \sup_{t\in [0,T]} |\widehat{y}|^2+\int_{0}^{T}|\widehat{z}|^2dt+\int_{0}^{T}\|\widehat{k}\|_{l^2(\mathbb{R}^n)}^2dt\bigg ]\\
		&\le K\mathbb{E}\bigg \{|G\widehat{x}_T|^2+\int_{0}^{T}|P(\widehat{x})|^2dt+\displaystyle \sup_{t\in [-\delta,0]}|\widehat{\xi}|^2+|\widehat{\eta} |^2+\int_{0}^{T}|\widehat{\varphi}|^2dt
		\Bigg \}.
	\end{aligned}
\end{equation}
Thus, combining \eqref{eq:6.26} and \eqref{eq:6.29}, we can derive
\begin{equation}\label{eq:3.23}
	\begin{aligned}
		\mathbb E\bigg[\displaystyle \sup_{t\in [0,T]} |\widehat{x}|^2+\displaystyle \sup_{t\in [0,T]} |\widehat{y}|^2+\int_{0}^{T}|\widehat{z}|^2dt+\int_{0}^{T}\|\widehat{k}\|_{l^2(\mathbb{R}^n)}^2dt\bigg ]
		\le K\mathbb{E}\Bigg \{\widehat{\mathrm{J} }+|G\widehat{x}_T|^2+\int_{0}^{T}|P(\widehat{x})|^2dt\Bigg \}.
	\end{aligned}
\end{equation}
where $\widehat{\mathrm{J}}$ is defined by \eqref{eq:3.12}. At last, \eqref{eq:3.16} and \eqref{eq:3.23} work together to turn out
\begin{equation}
	\begin{aligned}
		&\quad\mathbb E\bigg[\displaystyle \sup_{t\in [0,T]} |\widehat{x}|^2+\displaystyle \sup_{t\in [0,T]} |\widehat{y}|^2+\int_{0}^{T}|\widehat{z}|^2dt+\int_{0}^{T}\|\widehat{k}\|_{l^2(\mathbb{R}^n)}^2dt\bigg ]\\
		&\le K\mathbb{E}\Bigg \{\widehat{\mathrm{J} }-\left \langle \widehat{\eta},\widehat{x}_T  \right \rangle +\left \langle \widehat{\xi }_0,\widehat{\vartheta}_0  \right \rangle+\int_{0}^{T}\Big [\left \langle \widehat{\varphi},\widehat{x}\right \rangle +\left \langle \widehat{\psi},\widehat{y}\right \rangle + \left \langle \widehat{\gamma },\widehat{z}  \right \rangle +\left \langle \widehat{\beta },\widehat{k}  \right \rangle \Big ]dt\Bigg \}.
	\end{aligned}
\end{equation}
The remaining proof is the same as \eqref{eq:3.20} in Case 1 and then we finish the proof in this case. Consequently, the whole proof of the lemma is completed.
\end{proof}

Next, we prove a continuation lemma based on the priori estimate in Lemma \ref{lem:3.2}.
\begin{lem}\label{lem:3.3}
Let Assumption \ref{ass:3.1} and Assumption \ref{ass:3.2} be satisfied. If for some $\alpha_{0}\in[0,1)$, FBSDELDA \eqref{eq:3.8} admits a unique solution in $N_{\mathbb{F} }^2(0,T;\mathbb{R}^{n(2+d)}\times l^2(\mathbb{R}^n) )$ for any $(\pi(\cdot) ,\eta ,\rho (\cdot ))\in \mathcal{H} [-\delta,T]$, then there exists an absolute constant $\delta_0>0$ such that the same conclusion is also true for $\alpha=\alpha_{0}+\delta$ with $\delta\in(0,\delta_0]$ and $\alpha\le1$.
\end{lem}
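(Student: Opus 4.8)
The plan is to carry out the Hu--Peng continuation step by a Banach fixed point argument in the complete space $N_{\mathbb{F}}^2(0,T;\mathbb{R}^{n(2+d)}\times l^2(\mathbb{R}^n))$, making the contraction constant small by shrinking $\delta$. Fix $\alpha=\alpha_0+\delta$ with $\delta\in(0,1]$ and an arbitrary $(\pi(\cdot),\eta,\rho(\cdot))\in\mathcal{H}[-\delta,T]$. From \eqref{eq:3.9} one reads off $\Gamma^{\alpha}=\Gamma^{\alpha_0}+\delta\,\Delta\Gamma$ and $\Phi^{\alpha}=\Phi^{\alpha_0}+\delta\,\Delta\Phi$, where $\Delta\Phi(x)=\Phi(x)-vG^{\top}Gx$ and $\Delta\Gamma$ is an explicit ($\alpha$-independent) map built from $\Gamma$ and the linear expressions $P,P_{-},Q,Q_{-}$ of \eqref{eq:6.4}; both $\Delta\Phi$ and $\Delta\Gamma$ are Lipschitz in all their present, delayed and anticipated arguments, with constants controlled by the Lipschitz constants of $(\Phi,\Gamma)$, by $\mu,v$, and by the bounds of $G,A,\bar A,B,\bar B,C,\bar C,D_j,\bar D_j$. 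Given any $\theta^{(m)}=(x^{(m)},y^{(m)},z^{(m)},k^{(m)})\in N_{\mathbb{F}}^2$, I would define $\theta^{(m+1)}$ to be the solution of FBSDELDA \eqref{eq:3.8} at parameter $\alpha_0$ with the inhomogeneous data obtained from $(\pi,\eta,\rho)$ by \emph{freezing} the extra terms along $\theta^{(m)}$, i.e.\ replacing $\eta$ by $\eta+\delta\Delta\Phi(x^{(m)}_T)$ and adding to the generators the term $\delta\,\Delta\Gamma(t,\cdot)$ with its present, delayed and anticipated arguments formed from $\theta^{(m)}$. Since $\theta^{(m)}\in N_{\mathbb{F}}^2$ and $(\pi,\eta,\rho)\in\mathcal{H}[-\delta,T]$, the modified data again lies in $\mathcal{H}[-\delta,T]$, so by the induction hypothesis (solvability at $\alpha_0$) the iterate $\theta^{(m+1)}$ exists and is unique in $N_{\mathbb{F}}^2$; this defines a map $\mathcal{T}_\delta:\theta^{(m)}\mapsto\theta^{(m+1)}$ of $N_{\mathbb{F}}^2$ into itself.

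Next I would estimate $\mathcal{T}_\delta\theta^{(m)}-\mathcal{T}_\delta\theta^{(m-1)}=\theta^{(m+1)}-\theta^{(m)}$. Both $\theta^{(m+1)}$ and $\theta^{(m)}$ solve FBSDELDA \eqref{eq:3.8} at the \emph{same} parameter $\alpha_0$, and their inhomogeneous data coincide except in the terminal term and the generators, where they differ by $\delta\big(\Delta\Phi(x^{(m)}_T)-\Delta\Phi(x^{(m-1)}_T)\big)$ and $\delta\big(\Delta\Gamma(t,\theta^{(m)},\dots)-\Delta\Gamma(t,\theta^{(m-1)},\dots)\big)$ respectively; crucially, the initial segments $\lambda^{\alpha},\mu^{\alpha},\rho^{\alpha},\varsigma^{\alpha}$ and the perturbations $\xi,\vartheta,\tau,\chi$ are $\alpha$-independent and thus cancel in the difference. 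Applying the a priori estimate \eqref{eq:3.11} of Lemma \ref{lem:3.2} (with $\alpha=\alpha_0$) and then the Lipschitz continuity of $\Delta\Phi,\Delta\Gamma$ together with the time-shifting/Jensen bounds used in the proofs of Lemmas \ref{lem:2.2}--\ref{lem:2.3} to control the delayed arguments $\theta_{-}$ and the anticipated conditional expectations $\theta_{+}$, one obtains
\begin{equation*}
\big\|\theta^{(m+1)}-\theta^{(m)}\big\|_{N_{\mathbb{F}}^2(0,T;\mathbb{R}^{n(2+d)}\times l^2(\mathbb{R}^n))}^2\ \le\ K\,\delta^2\,\big\|\theta^{(m)}-\theta^{(m-1)}\big\|_{N_{\mathbb{F}}^2(0,T;\mathbb{R}^{n(2+d)}\times l^2(\mathbb{R}^n))}^2,
\end{equation*}
where $K$ is the constant of Lemma \ref{lem:3.2}; in particular $K$ depends only on $T$, the Lipschitz constants, $\mu$, $v$ and the matrix bounds, hence is independent of $\alpha_0$, of $\delta$ and of the data $(\pi,\eta,\rho)$.

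Finally, pick the absolute constant $\delta_0:=\min\{1,(2\sqrt{K})^{-1}\}$, so that $K\delta_0^2\le 1/4$. For every $\delta\in(0,\delta_0]$ the map $\mathcal{T}_\delta$ is then a contraction on the Banach space $N_{\mathbb{F}}^2(0,T;\mathbb{R}^{n(2+d)}\times l^2(\mathbb{R}^n))$, hence has a unique fixed point $\theta^{*}$. A fixed point is precisely a point for which the frozen extra terms agree with the live ones, so $\theta^{*}$ solves FBSDELDA \eqref{eq:3.8} at $\alpha=\alpha_0+\delta$ with data $(\pi,\eta,\rho)$; conversely any such solution is a fixed point of $\mathcal{T}_\delta$, whence uniqueness. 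As $(\pi,\eta,\rho)\in\mathcal{H}[-\delta,T]$ was arbitrary and $\delta_0$ does not depend on it, this proves the claim for all $\alpha=\alpha_0+\delta$ with $\delta\in(0,\delta_0]$ and $\alpha\le1$.

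I expect the main obstacle to be the second step: after freezing, one must verify that the difference of the inhomogeneous data is genuinely bounded by $\delta\,\|\theta^{(m)}-\theta^{(m-1)}\|_{N_{\mathbb{F}}^2}$ with a constant that is the universal $K$ of Lemma \ref{lem:3.2} and not something depending on $\alpha_0$ or on $(\pi,\eta,\rho)$. This requires careful handling of the delayed terms $\theta_{-}$, the anticipated conditional expectations $\theta_{+},y_{+},z_{+},k_{+}$, and especially the mixed objects $P_{-},Q_{-}$ in \eqref{eq:6.4} (which involve $\mathbb{E}^{\mathcal{G}_t}[\cdot]$) via the time-shift estimates --- the same estimates that motivated the assumption $\bar{A}=\bar{B}=\bar{C}_i=\bar{D}_j=0$ on $[0,\delta]$ --- so that the resulting $\delta_0$ is genuinely absolute.
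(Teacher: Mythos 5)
Your proposal is correct and follows essentially the same route as the paper: the paper also defines the map $\mathcal{T}_{\alpha_0+\delta}$ by freezing the $\delta$-increment of $(\Phi^{\alpha},\Gamma^{\alpha})$ along a given $\theta(\cdot)$ (exactly your $\Delta\Phi(x)=\Phi(x)-vG^{\top}Gx$ and $\Delta\Gamma$ built from $\Gamma$ and $P,P_-,Q,Q_-$), solves at parameter $\alpha_0$ using the induction hypothesis, and then applies the a priori estimate of Lemma \ref{lem:3.2} together with the Lipschitz/boundedness assumptions to get $\|\widehat{\Theta}\|^2\le K'\delta^2\|\widehat{\theta}\|^2$ with $K'$ independent of $\alpha_0$ and of the data, choosing $\delta_0=1/(2\sqrt{K'})$. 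The point you flag as the main obstacle is handled in the paper exactly as you anticipate, via the time-shift estimates and the vanishing of $\bar A,\bar B,\bar C,\bar D_j$ on $[0,\delta]$.
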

\begin{proof}
Let $\delta_0>0$ be determined below. For any $\theta (\cdot )\in N_{\mathbb{F} }^2(0 ,T;\mathbb{R}^{n(2+d)}\times l^2(\mathbb{R}^n) )$, we introduce the following FBSDELDA with unknow $\Theta (\cdot):=(X(\cdot)^\top,Y(\cdot)^\top,Z(\cdot)^\top,K(\cdot)^\top)\in N_{\mathbb{F}}^2(0,T;\mathbb{R}^{n(2+d)}\times l^2(\mathbb{R}^n) )$ where $K(\cdot):=(K^{(1)}(\cdot)^\top,K^{(2)}(\cdot)^\top,\cdots )^\top$ (compared to \eqref{eq:3.8} with $\alpha$ = $\alpha_{0}+\delta$):
\begin{equation}\label{eq:3.25}
	\left\{\begin{aligned}
		dX(t)  =&\Big\{-(1-\alpha_0)\mu\Big[B(t)^{\top}Q(t,Y_t,Z_t,K_t)+\bar{B}(t)^{\top}Q_{-}(t,Y_t,Z_t,K_t)\Big]\\&~~+\alpha_0 b(t,\Theta(t), \Theta_{-}(t),Y_{+}(t),Z_{+}(t),K_{+}(t))+\widetilde{\psi}(t)\Big\}dt\\
		&+\Big\{-(1-\alpha_0)\mu\Big[C(t)^{\top}Q(t,Y_t,Z_t,K_t)+\bar{C}(t)^{\top}Q_{-}(t,Y_t,Z_t,K_t)\Big]\\&~~+\alpha_0 \sigma(t,\Theta(t), \Theta_{-}(t),Y_{+}(t),Z_{+}(t),K_{+}(t))+\widetilde{\gamma}(t)\Big\}d W(t)\\&+\sum_{i=1}^{\infty }\Big\{-(1-\alpha_0)\mu\Big[D_i(t)^{\top}Q(t,Y_{t-},Z_t,K_t)+\bar{D}_i(t)^{\top}Q_{-}(t,Y_{t-},Z_t,K_t)\Big]\\&~~+\alpha_0 g^{(i)}(t,\Theta(t-), \Theta_{-}(t-),Y_{+}(t-),Z_{+}(t),K_{+}(t))+\widetilde{\beta}^{(i)}(t)\Big\}dH^{(i)}(t),\quad t \in[0, T], \\
		dY(t) =&\Big\{-(1-\alpha_0)v\Big[A(t)^{\top}P(t,X_t)+\bar{A}(t)^{\top}P_{-}(t,X_t)\Big]\\&~~+\alpha_0f(t, \Theta(t), X_{-}(t),\Theta_{+}(t))+\widetilde{\varphi}(t)\Big\}dt\\&+Z(t)d W(t)+\sum_{i=1}^{\infty }K^{(i)}(t)dH^{(i)}(t), \quad t \in[0, T],\\
		X(t)=&\lambda(t),\quad Y(t)=\mu(t),\quad Z(t)=\rho(t),\quad K(t)=\varsigma(t),\quad t \in[-\delta , 0],\\
		Y(T)=&\alpha_{0}\Phi(X(T))+(1-\alpha_{0})vG^{\top}GX(T)+\widetilde{\eta},\\
		X(t)=&Y(t)=Z(t)=K(t)=0,\quad t\in(T,T+\delta],
	\end{aligned}\right.
\end{equation}
where
 \begin{equation}
	\left\{\begin{aligned}
 		&\widetilde{\psi}(t)=\psi(t) +\delta b(t,\theta (t),\theta_{-}(t ),y_{+}(t),z_{+}(t),k_{+}(t))+\delta \mu  \Big [B(t)^\top Q(t,y_t,z_t,k_t)+\bar{B}(t)^\top Q_{-}(t,y_t,z_t,k_t)\Big ],\\
		&\widetilde{\gamma}(t)=\gamma(t) +\delta \sigma(t,\theta (t),\theta_{-}(t ),y_{+}(t),z_{+}(t),k_{+}(t))+\delta \mu  \Big [C(t)^\top Q(t,y_t,z_t,k_t)+\bar{C}(t)^\top Q_{-}(t,y_t,z_t,k_t)\Big ],\\
		&\widetilde{\beta}^{(i)}(t)=\beta^{(i)}(t) +\delta g^{(i)}(t,\theta (t),\theta_{-}(t ),y_{+}(t),z_{+}(t),k_{+}(t))+\delta \mu  \Big [D_i(t)^\top Q(t,y_t,z_t,k_t)+\bar{D}_i(t)^\top Q_{-}(t,y_t,z_t,k_t)\Big ],\\
		&\widetilde{\varphi}(t)=\varphi(t)+\delta f(t,\theta (t),x_{-}(t),\theta_{+}(t))+\delta v\Big[A(t)^\top P(t,x_t)+\bar{A}(t)^\top P_{-}(t,x_t)\Big],\\
		&\widetilde{\eta}=\eta +\delta \Phi(x(T))-\delta vG^\top Gx(T).
	\end{aligned}\right.
\end{equation}
Similar to before, for $\Theta(t-)$ and $\Theta_{-}(t-)$, $t-$ only works on $X$, $X_{-}$ and $Y$, $Y_{-}$. Furthermore, we also denote $\Lambda(\cdot)=(\lambda(\cdot),\mu(\cdot),\rho(\cdot),\varsigma(\cdot))$ and  $\widetilde{\rho} (\cdot )=(\widetilde{\varphi} (\cdot )^\top ,\widetilde{\psi} (\cdot )^\top,\widetilde{\gamma} (\cdot )^\top,\widetilde{\beta} (\cdot )^\top)^\top$. Then it is easy to check that $(\Lambda,\widetilde{\eta},\widetilde{\rho})\in\mathcal{H}[-\delta,T]$. By our assumptions, the FBSDELDA \eqref{eq:3.25} admits a unique solution $\Theta(\cdot )\in N_{\mathbb{F} }^2(0,T;\mathbb{R}^{n(2+d)}\times l^2(\mathbb{R}^n) )$. In fact, we have established a mapping
\begin{equation}
\Theta (\cdot)=\mathcal{T}_{\alpha _0+\delta }\big (\theta (\cdot )\big ): N_{\mathbb{F} }^2(0,T;\mathbb{R}^{n(2+d)}\times l^2(\mathbb{R}^n) )\to N_{\mathbb{F} }^2(0 ,T;\mathbb{R}^{n(2+d)}\times l^2(\mathbb{R}^n) ).\nonumber
\end{equation}
In the following, we shall prove that the above mapping is contractive when $\delta$ is small enough.

Let $\theta(\cdot ),\bar{\theta}(\cdot )\in N_{\mathbb{F} }^2(0  ,T;\mathbb{R}^{n(2+d)}\times l^2(\mathbb{R}^n) )$ and $\Theta (\cdot)=\mathcal{T}_{\alpha _0+\delta }\big (\theta (\cdot )\big )$, $\bar{\Theta} (\cdot)=\mathcal{T}_{\alpha _0+\delta }\big (\bar{\theta} (\cdot )\big )$. Similarly, we denote $\widehat{\theta }(\cdot)=\theta(\cdot) -\bar{\theta }(\cdot)$, $\widehat{\Theta }(\cdot)=\Theta(\cdot) -\bar{\Theta }(\cdot)$, etc. By applying Lemma \ref{lem:3.2}, we have (the argument $t$ is suppressed for simplicity)
\begin{equation}
	\begin{aligned}
		&\quad\|\widehat{\Theta } \|^2_{N_{\mathbb{F} }^2(0,T  ;\mathbb{R}^{n(2+d)}\times l^2(\mathbb{R}^n) )}\\&=\mathbb{E}\bigg [\displaystyle\sup_{t\in[0 ,T]}|\widehat{X}|^2+\displaystyle\sup_{t\in [0,T ]}|\widehat{Y}|^2+\int_{0}^{T}|\widehat{Z}|^2dt+\int_{0}^{T }\|\widehat{K}\|_{l^2(\mathbb{R}^n)}^2dt \bigg ]\\
		&\le K\delta ^2\mathbb{E}\Bigg \{\Big |\big (\Phi (x_T)-\Phi (\bar{x}_T )\big )-v G^\top G\widehat{x}_T\Big |^2\\
		&\quad +\int_{0}^{T}\bigg |\big (b(\theta,\theta_{-},y_{+},z_{+},k_{+})-b(\bar{ \theta },\bar{\theta} _{-},\bar{y} _{+},\bar{z} _{+},\bar{k} _{+})\big )+\mu \big[B^\top Q(\widehat{y},\widehat{z},\widehat{k})+\bar{B}^\top Q_{-}(\widehat{y},\widehat{z},\widehat{k})\big]\bigg |^2dt\\
		&\quad +\int_{0}^{T}\bigg |\big (\sigma(\theta,\theta_{-},y_{+},z_{+},k_{+})-\sigma(\bar{ \theta },\bar{\theta} _{-},\bar{y} _{+},\bar{z} _{+},\bar{k} _{+})\big )+\mu \big[C^\top Q(\widehat{y},\widehat{z},\widehat{k})+\bar{C}^\top Q_{-}(\widehat{y},\widehat{z},\widehat{k})\big]\bigg |^2dt\\
		&\quad +\int_{0}^{T}\sum_{i=1}^{\infty } \bigg \|\big (g^{(i)}(\theta,\theta_{-},y_{+},z_{+},k_{+})-g^{(i)}(\bar{ \theta },\bar{\theta} _{-},\bar{y} _{+},\bar{z} _{+},\bar{k} _{+})\big )+\mu \big[D_i^\top Q(\widehat{y},\widehat{z},\widehat{k})+\bar{D}_i^\top Q_{-}(\widehat{y},\widehat{z},\widehat{k})\big]\bigg \|_{l^2(\mathbb{R}^n)}^2dt\\
		&\quad+\int_{0}^{T}\bigg|\big (f(\theta,x_{-},\theta_{+})-f(\bar{ \theta },\bar{x} _{-},\bar{\theta} _{+})\big )+v\big[A^\top P(\widehat{x})+\bar{A}^\top P_{-}(\widehat{x})\big] \bigg|
		\Bigg\}.\nonumber
	\end{aligned}
\end{equation}
Due to the Lipschitz continuity of $(\Phi,\Gamma)$ and the boundedness of $G$, $A(\cdot)$, $\bar{A}(\cdot)$, $B(\cdot)$, $\bar{B}(\cdot)$, $C(\cdot)$, $\bar{C}(\cdot)$, $D_i(\cdot)$, $\bar{D}_i(\cdot)$, there exists a new constant $K'>0$ independent of $\alpha_{0}$ and $\delta$ such that
\begin{equation}
 \|\widehat{\Theta } \|^2_{N_{\mathbb{F} }^2(0,T;\mathbb{R}^{n(2+d)}\times l^2(\mathbb{R}^n) )}\le K'\delta ^2 \|\widehat{\theta } \|^2_{N_{\mathbb{F} }^2(0,T;\mathbb{R}^{n(2+d)}\times l^2(\mathbb{R}^n) )}.\nonumber
\end{equation}
By selecting $\delta_0=1/(2\sqrt{K'})$, when $\alpha\in(0,\delta_0]$, we can find that the mapping $\mathcal{T}_{\alpha _0+\delta }$ is contractive. Thus, the mapping $\mathcal{T}_{\alpha _0+\delta }$ admits a unique fixed point which is just the unique solution to FBSDELDA \eqref{eq:3.8}. The proof is finished.
\end{proof}

We summarize the above analysis to give the following proof.
\begin{proof}[proof of Theorem \ref{thm:3.1}]
Firstly, the unique solvability of FBSDELDA \eqref{eq:1.1} in the space $N_{\mathbb{F} }^2(0,T;\mathbb{R}^{n(2+d)}\times l^2(\mathbb{R}^n) )$ is deduced from the unique solvability of FBSDELDA \eqref{eq:3.10} and Lemma \ref{lem:3.3}. Secondly, in Lemma \ref{lem:3.2}, by taking $\alpha=1$, $\big(\pi(\cdot),\eta,\rho(\cdot)\big)=(0,0,0)$, and$\big(\bar{\pi}(\cdot),\bar{\eta},\bar{\rho}(\cdot)\big)=\Big(\bar{\Lambda}(\cdot)-\Lambda(\cdot), \bar{\Phi}(\bar{x}_T)-\Phi(\bar{x}_T), \bar{\Gamma}\big(\cdot,\bar{\theta}(\cdot),\bar{\theta}_{-}(\cdot),\bar{\theta}_{+}(\cdot)\big)-\Gamma\big(\cdot,\bar{\theta}(\cdot),\bar{\theta}_{-}(\cdot),\bar{\theta}_{+}(\cdot)\big)\Big)$, we get the estimate \eqref{eq:3.6} in Theorem \ref{thm:3.1} from the estimate \eqref{eq:3.11} in Lemma \ref{lem:3.2}. Finally, by selecting the coefficients $\big(\bar{\Lambda},\bar{\Phi},\bar{\Gamma}\big)=(0,0,0)$, we get \eqref{eq:3.4} from \eqref{eq:3.6}. Then the proof is completed.
\end{proof}

\section{Applications in linear quadratic problem}\label{sec:4}
In this section, we will study two kinds of linear quadratic optimal control problems and then we find that the Hamiltonian systems arising from these linear quadratic problems are FBSDELDAs satisfying the domination-monotonicity conditions mentioned in Section \ref{sec:3}. Therefore, we have the conclusion that they are uniquely solvable by virtue of Theorem \ref{thm:3.1}. Actually, exploring the solvability of these Hamiltonian systems is also one of our motivations in this paper. It should be noted that we assume that the Brownian motion is 1-dimensional in this section.
\subsection{Forward LQ stochastic control problem}\label{sec:4.1}
Firstly, we give the following control system driven by a linear forward SDEDL:
 \begin{equation}\label{eq:4.1}
 	\left\{\begin{aligned}
 		dx_t=&\big(A_tx_t+\bar{A}_tx_{t-\delta }+B_tv_t+\bar{B}_tv_{t-\delta }\big)dt+\big(C_tx_t+\bar{C}_tx_{t-\delta }+D_tv_t+\bar{D}_tv_{t-\delta }\big)dW_t\\
 		&+\sum_{i=1}^{\infty }\big(E_t^{(i)}x_{t-}+\bar{E}_t^{(i)}x_{(t-\delta)- }+F_t^{(i)}v_t+\bar{F}^{(i)}_tv_{t-\delta }\big)dH_t^{(i)},\quad t\in[0,T],\\
 		x_0=&a, \quad x_t=\lambda_t,\quad v_t=\zeta _t,\quad t\in [-\delta ,0),
 	\end{aligned}\right.
 \end{equation}
where $\delta>0$ is a  constant time delay, $a\in \mathbb{R}^n$, $\lambda_t\in C(-\delta,0;\mathbb{R}^{n} )$, $\zeta_t\in C(-\delta,0;\mathbb{R}^{m} )$. Moreover, $A_t, C_t, E^{(i)}_t, \bar{A}_t, \bar{C}_t, \bar{E}^{(i)}_t\in L^{\infty } _{\mathbb{F}}(0,T;\mathbb{R}^{n\times n} )$, $B_t, D_t, F^{(i)}_t\in L^{\infty } _{\mathbb{F}}(0,T;\mathbb{R}^{n\times m} )$, $\bar{B}_t, \bar{D}_t, \bar{F}^{(i)}_t\in L^{\infty } _{\mathbb{G}}(0,T;\mathbb{R}^{n\times m} )$, where $\bar{B}_t=\bar{D}_t=\bar{F}_t=0$, when $t\in[0,\delta]$. The admissible control set is denoted by $\mathcal{V}_{ad}$, in which each element $v(\cdot )\in \mathcal{V}_{ad}$ has the following form
\begin{equation}
	\left\{\begin{aligned}
		&v_t=\zeta _t,\quad t \in [-\delta ,0),\\
		&v_t=v_t\in  L^{2} _{\mathbb{F}}(0,T;\mathbb{R}^{m} ),\quad t\in [0,T],\nonumber
	\end{aligned}\right.
\end{equation}
which is called an admissible control. By Lemma \ref{lem:2.2}, we know that SDEDL \eqref{eq:4.1} admits a unique solution $x(\cdot)\in \mathcal{S}^{2} _{\mathbb{F}}(0,T;\mathbb{R}^{n} )$.

Next, we continue to give a cost functional in quadratic form as follows:
\begin{equation}\label{eq:4.2}
	\begin{aligned}
	J\big(v(\cdot )\big)=\frac{1}{2} \mathbb{E}\bigg \{\int_{0}^{T}& \Big ( \left \langle Q_tx_t,x_t \right \rangle + \left \langle \bar{Q}_tx_{t-\delta},x_{t-\delta } \right \rangle +\left \langle R_tv_t,v_t \right \rangle+\left \langle \bar{R}_tv_{t-\delta},v_{t-\delta } \right \rangle\\&+2\left \langle S_tx_t,v_t \right \rangle+2\left \langle \bar{S}_{t}x_{t-\delta},v_{t-\delta} \right \rangle\Big )dt+\left \langle Gx_T,x_T \right \rangle \bigg \},
	\end{aligned}
\end{equation}
where $Q_t,\bar{Q}_t \in L^{\infty }_{\mathbb{F} }(0,T;\mathbb{S}^n )$, $R_t,\bar{R}_t \in L^{\infty }_{\mathbb{F} }(0,T;\mathbb{S}^m)$, $S_t,\bar{S}_t\in L^{\infty }_{\mathbb{F} }(0,T;\mathbb{R}^{n\times m} )$ and $G$ is an $\mathcal{F}$-measurable $n\times n$ symmetric matrix-valued bounded random variable. In addition,  $\bar{Q}_t=\bar{R}_t=\bar{S}_t=0$, when $t\in (T,T+\delta]$.

Now, we propose our main problem as follows:\\
\textbf{Problem(LQDL).}The problem is to find an admissible control $u(\cdot)\in \mathcal{V}_{ad}$ such that
\begin{equation}
	J\big(u(\cdot )\big)=\underset{v(\cdot )\in \mathcal{V}_{ad} }{\textrm{inf}}J\big(v(\cdot )\big) .
\end{equation}
Then such an admissible control $u(\cdot)$ is called an optimal control, and $x^{u}(\cdot)$ is called the corresponding optimal trajectory.

 Moreover, we impose the following assumption:
 \begin{ass}\label{ass:4.1}
 (i)$G$ is nonnegative definite;\\
  (ii)For any $(\omega ,t)\in \Omega \times [0,T]$, $Q_t+\mathbb{E}^{\mathcal{F}_t}[\bar{Q}_{t+\delta}]$ is nonnegative definite;\\
  (iii)For any $(\omega ,t)\in \Omega \times [0,T]$, $R_t+\mathbb{E}^{\mathcal{F}_t}[\bar{R}_{t+\delta}]$ is positive definite. Besides, there exists a constant $\delta>0$ such that
 \begin{equation}
 	\left \langle (R_t+\mathbb{E}^{\mathcal{F}_t }[R_{t+\delta }])v,v  \right \rangle \ge \delta |v|^2,\quad a.s.\nonumber
 \end{equation}
 for any $v\in \mathbb{R}^m$ and for almost every $t\in [-\delta,T]$;\\
 (iv)$(Q_t+\mathbb{E}^{\mathcal{F}_t}[\bar{Q}_{t+\delta}])-(S_t+\mathbb{E}^{\mathcal{F}_t}[\bar{S}_{t+\delta}])^{\top}(R_t+\mathbb{E}^{\mathcal{F}_t}[\bar{R}_{t+\delta}])^{-1}(S_t+\mathbb{E}^{\mathcal{F}_t}[\bar{S}_{t+\delta}])$ is nonnegative definite.
 \end{ass}

\begin{rmk}
	This type of LQ optimal control problem for the system with delay and L\'{e}vy processes has been studied by Li and Wu \cite{li2016stochastic}. However, the cost functional in this paper is more complex and general, where $\left \langle \bar{Q}_tx_{t-\delta},x_{t-\delta } \right \rangle$, $\left \langle S_tx_t,v_t \right \rangle$ and $\left \langle \bar{S}_{t}x_{t-\delta},v_{t-\delta} \right \rangle$ are also considered.
\end{rmk}

Firstly, based on the result in Sun et al. \cite[Lemma 2.3]{sun2016open}, we introduce the following lemma for later use.
\begin{lem}\label{lem:4.1}
For any $v(\cdot)\in \mathcal{V}_{ad}$, let $x^v(\cdot)$ be the solution of the following equation:
 \begin{equation}\nonumber
	\left\{\begin{aligned}
		dx_t=&\big(A_tx_t+\bar{A}_tx_{t-\delta }+B_tv_t+\bar{B}_tv_{t-\delta }\big)dt+\big(C_tx_t+\bar{C}_tx_{t-\delta }+D_tv_t+\bar{D}_tv_{t-\delta }\big)dW_t\\
		&+\sum_{i=1}^{\infty }\big(E_t^{(i)}x_{t-}+\bar{E}_t^{(i)}x_{(t-\delta)- }+F_t^{(i)}v_t+\bar{F}^{(i)}_tv_{t-\delta }\big)dH_t^{(i)},\quad t\in[0,T],\\
		x_0=&0, \quad x_t=0,\quad v_t=0,\quad t\in [-\delta ,0),
	\end{aligned}\right.
\end{equation}
Then for any $\Theta(\cdot)\in L^2(0,T;\mathbb{R}^{m\times n})$, there exists a constant $\gamma>0$ such that
\begin{equation}\nonumber
	\mathbb{E}\int_{0}^{T}|v_t-\Theta_{t}x^v_t|^2dt\ge\gamma\mathbb{E}\int_{0}^{T}|v_t|^2dt,\qquad \forall v(\cdot)\in\mathcal{V}_{ad}.
\end{equation}
\end{lem}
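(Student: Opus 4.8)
Looking at this lemma, it concerns a linear map $v(\cdot) \mapsto x^v(\cdot)$ from the admissible control set to the solution space, and we want a coercivity-type estimate: subtracting any feedback $\Theta_t x^v_t$ from $v_t$ cannot decrease the $L^2$-norm of $v$ below a fixed positive multiple of itself.

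\textbf{Proof proposal.} The plan is to argue by contradiction using a compactness/normalization argument, exactly in the spirit of \cite[Lemma 2.3]{sun2016open}. First I would denote by $\mathcal{L}: L^2_{\mathbb{F}}(0,T;\mathbb{R}^m)\to \mathcal{S}^2_{\mathbb{F}}(0,T;\mathbb{R}^n)$ the bounded linear operator sending $v(\cdot)$ to $x^v(\cdot)$, the solution of the homogeneous (zero initial data) SDEDL; its boundedness is precisely the estimate \eqref{eq:2.2} in Lemma \ref{lem:2.2}, which gives $\mathbb{E}[\sup_{t}|x^v_t|^2]\le K\mathbb{E}\int_0^T|v_t|^2dt$ since all the free terms $b(\cdot,0,0),\sigma(\cdot,0,0),g(\cdot,0,0)$ here equal $B_tv_t$, $D_tv_t$, etc., which are controlled by $|v_t|$ via the boundedness of the coefficient processes. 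Suppose the claim fails: then for every $n\ge 1$ there is $v^n(\cdot)\in\mathcal{V}_{ad}$ with $\mathbb{E}\int_0^T|v^n_t|^2dt=1$ (after normalizing, which is legitimate since the inequality is homogeneous of degree two in $v$) and $\mathbb{E}\int_0^T|v^n_t-\Theta_t x^{v^n}_t|^2dt<\frac1n$.

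Next I would extract a weak limit. Since $\{v^n\}$ is bounded in the Hilbert space $L^2_{\mathbb{F}}(0,T;\mathbb{R}^m)$, pass to a subsequence converging weakly to some $v^*$. Because $\mathcal{L}$ is bounded linear, it is weakly continuous, so $x^{v^n}=\mathcal{L}v^n\rightharpoonup \mathcal{L}v^*=:x^*$ weakly in $\mathcal{S}^2_{\mathbb{F}}$ (and in particular $\Theta_\cdot x^{v^n}\rightharpoonup \Theta_\cdot x^*$ weakly in $L^2_{\mathbb{F}}(0,T;\mathbb{R}^m)$ using $\Theta\in L^2$ bounded multiplication — one has to be slightly careful here, but $\Theta$ boundedness or a truncation argument handles it). Then $v^n-\Theta_\cdot x^{v^n}\rightharpoonup v^*-\Theta_\cdot x^*$, and since the $L^2$-norm is weakly lower semicontinuous while $\|v^n-\Theta_\cdot x^{v^n}\|\to 0$, we get $v^*=\Theta_\cdot x^*$ almost everywhere. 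Substituting back, $x^*$ solves the SDEDL with control $v^*=\Theta_\cdot x^*$, i.e. a closed-loop linear SDEDL with zero initial data; by uniqueness in Lemma \ref{lem:2.2} this forces $x^*\equiv 0$, hence $v^*=\Theta_\cdot x^*\equiv 0$.

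Finally I would derive the contradiction from strong convergence. Write $v^n = (v^n-\Theta_\cdot x^{v^n}) + \Theta_\cdot x^{v^n}$. The first summand tends to $0$ strongly in $L^2$ by hypothesis. For the second, I would need $\Theta_\cdot x^{v^n}\to 0$ strongly; this requires a compactness input — namely that $\mathcal{L}$ maps weakly convergent sequences to strongly convergent ones, or at least that $\Theta_\cdot \mathcal{L}$ does. I expect \emph{this} to be the main obstacle: in the delayed/Lévy setting one must verify that the map $v\mapsto x^v$ has the requisite compactness, which I would obtain by the standard route of showing $x^{v^n}\to x^*=0$ strongly in $L^2_{\mathbb{F}}(0,T;\mathbb{R}^n)$ via the energy estimate applied to differences together with the weak convergence of $v^n$ against the (fixed) coefficient processes in the drift/diffusion integrals, or by invoking the argument structure of \cite[Lemma 2.3]{sun2016open} adapted to SDEDLs. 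Granting that, $\|v^n\|_{L^2}\to \|v^*\|_{L^2}=0$, contradicting the normalization $\|v^n\|_{L^2}=1$. This contradiction establishes the existence of $\gamma>0$, completing the proof.
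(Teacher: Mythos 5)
There is a genuine gap, and it sits exactly where you flagged it: the compactness input you "grant" at the end is false in this setting. The control $v$ enters the martingale parts of the state equation (the terms $D_tv_t\,dW_t$ and $F^{(i)}_tv_t\,dH^{(i)}_t$), and for such systems the map $v\mapsto x^v$ does \emph{not} send weakly convergent sequences to strongly convergent ones. A minimal counterexample: take all coefficients zero except $D=I$, so $x^v_t=\int_0^tv_s\,dW_s$, and take deterministic oscillating controls $v^n$ with $\|v^n\|_{L^2}=1$ and $v^n\rightharpoonup 0$. Then by the It\^o isometry $\mathbb{E}\int_0^T|x^{v^n}_t|^2dt=\int_0^T(T-s)|v^n_s|^2ds$, which stays bounded away from zero, so $x^{v^n}\not\to 0$ strongly even though $x^{v^n}\rightharpoonup 0$. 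Consequently $\Theta_\cdot x^{v^n}\to 0$ strongly cannot be expected, the decomposition $v^n=(v^n-\Theta_\cdot x^{v^n})+\Theta_\cdot x^{v^n}$ does not yield $\|v^n\|\to 0$, and the contradiction never materializes. (The earlier steps of your argument --- normalization, weak limit extraction, identification $v^*=\Theta_\cdot x^*$ and hence $x^*\equiv 0$ by uniqueness for the closed-loop SDEDL --- are fine; it is only the upgrade to strong convergence that fails, and it fails precisely because the control sits in the diffusion and jump coefficients.)

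The paper (following the actual mechanism of Sun--Li--Yong's Lemma 2.3, which is not a compactness argument) closes the proof by purely soft functional analysis: define $\mathfrak{D}v=v-\Theta x^v$ on $\mathcal{V}_{ad}$, observe that $\mathfrak{D}$ is a bounded linear bijection because its inverse can be written down explicitly as $\mathfrak{D}^{-1}v=v+\Theta\widetilde{x}^v$, where $\widetilde{x}^v$ solves the closed-loop SDEDL whose coefficients are $A+B\Theta$, $C+D\Theta$, $E^{(i)}+F^{(i)}\Theta$, etc.\ (well-posed by Lemma \ref{lem:2.2}); then the bounded inverse theorem gives $\|\mathfrak{D}^{-1}\|<\infty$ and hence $\mathbb{E}\int_0^T|v_t|^2dt\le\|\mathfrak{D}^{-1}\|\,\mathbb{E}\int_0^T|v_t-\Theta_tx^v_t|^2dt$, i.e.\ the claim with $\gamma=\|\mathfrak{D}^{-1}\|^{-1}$. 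If you want to repair your write-up, replace the entire contradiction/compactness scheme with this two-line inversion argument; nothing from your weak-convergence machinery is needed.
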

\begin{proof}
We first define a bounded linear operator $\mathfrak{D} :\mathcal{V}_{ad}\to\mathcal{V}_{ad}$ by
\begin{equation}\nonumber
	\mathfrak{D}v=v-\Theta x^v.
\end{equation}
Then $\mathfrak{D}$ is bijective and its inverse $\mathfrak{D}^{-1}$ is given by
\begin{equation}\nonumber
	\mathfrak{D}^{-1}v=v+\Theta \widetilde{x}^v,
\end{equation}
where $\widetilde{x}^v(\cdot)$ is the solution of the following equation:
 \begin{equation}\nonumber
	\left\{\begin{aligned}
		d\widetilde{x}^v_t=&\big[(A_t+B_t\Theta_t)\widetilde{x}^v_t+(\bar{A}_t+\bar{B}_t\Theta_{t-\delta})\widetilde{x}^v_{t-\delta }+B_tv_t+\bar{B}_tv_{t-\delta }\big]dt\\&+\big[(C_t+D_t\Theta_t)\widetilde{x}^v_t+(\bar{C}_t+\bar{D}_t\Theta_{t-\delta})\widetilde{x}^v_{t-\delta }+D_tv_t+\bar{D}_tv_{t-\delta }\big]dW_t\\
		&+\sum_{i=1}^{\infty }\big((E^{(i)}_t+F^{(i)}_t\Theta_t)\widetilde{x}^v_{t-}+(\bar{E}^{(i)}_t+\bar{F}^{(i)}_t\Theta_{t-\delta})\widetilde{x}^v_{(t-\delta)- }+F_t^{(i)}v_t+\bar{F}^{(i)}_tv_{t-\delta }\big)dH_t^{(i)},\quad t\in[0,T],\\
		\widetilde{x}^v_0=&0, \quad \widetilde{x}^v_t=0,\quad v_t=0,\quad t\in [-\delta ,0).
	\end{aligned}\right.
\end{equation}
Noticing the bounded inverse theorem, we derive that $\mathfrak{D}^{-1}$ is bounded with norm $\|\mathfrak{D}^{-1}\|>0$. Based on this, we have
\begin{equation}\nonumber
	\mathbb{E}\int_{0}^{T}|v_t|^2dt=\mathbb{E}\int_{0}^{T}|(\mathfrak{D}^{-1}\mathfrak{D}v)_t|^2dt\le\|\mathfrak{D}^{-1}\|\mathbb{E}\int_{0}^{T}|(\mathfrak{D}v)_t|^2dt=\|\mathfrak{D}^{-1}\|\mathbb{E}\int_{0}^{T}|v_t-\Theta_tx^v_t|^2dt.
\end{equation}	
Finally, by taking $\gamma=\|\mathfrak{D}^{-1}\|^{-1}$, we finish the proof.
\end{proof}
Next, we will give the main result of this section. First of all, by the result of maximum principle for stochastic control system with delay and L\'{e}vy processes in Li and Wu \cite{li2014maximum} and \cite{li2016stochastic}, we can deduce the stochastic Hamiltonian system of SDEDL \eqref{eq:4.1} as follows:
\begin{equation}\label{eq:4.4}
	\left\{\begin{aligned}
		dx_t=&\big(A_tx_t+\bar{A}_tx_{t-\delta }+B_tu_t+\bar{B}_tu_{t-\delta }\big)dt+\big(C_tx_t+\bar{C}_tx_{t-\delta }+D_tu_t+\bar{D}_tu_{t-\delta }\big)dW_t\\
		&+\sum_{i=1}^{\infty }\big(E_t^{(i)}x_{t-}+\bar{E}_t^{(i)}x_{(t-\delta)- }+F_t^{(i)}v_t+\bar{F}^{(i)}_tv_{t-\delta }\big)dH_t^{(i)},\quad t\in[0,T],\\
		dy_t=&-\Big (A_t^\top y_t+C_t^\top z_t+\sum_{i=1}^{\infty } E_t^{(i)\top} k^{(i)}_t+Q_t x_t+S_t^\top u_t+\mathbb{E}^{\mathcal{F}_t}\big[\bar{A}_{t+\delta }^\top y_{t+\delta}+\bar{C}_{t+\delta }^\top z_{t+\delta}
		\\&~~~~~~+\sum_{i=1}^{\infty } \bar{E}_{t+\delta }^{(i)\top} k^{(i)}_{t+\delta}+\bar{Q}_{t+\delta }x_t+\bar{S}^\top_{t+\delta }u_t\big]\Big)dt+z_tdW_t+\sum_{i=1}^{\infty }k_t^{(i)}dH_t^{(i)},\quad t\in [0,T],\\
		x_0=&a, \quad x_t=\lambda_t,\quad u_t=\zeta _t,\quad t\in [-\delta ,0),\\
		y_T=&Gx_T,\quad y_t=z_t=k_t=0,\quad t\in [-\delta,0)\cup(T,T+\delta ],\\
		\Big(R_t+&\mathbb{E}^{\mathcal{F}_t}[\bar{R}_{t+\delta }]\Big)u_t+\Big (B_t^\top y_t+D_t^\top z_t+\sum_{i=1}^{\infty} F_t^{(i)\top} k^{(i)}_t+S_tx_t\\&+\mathbb{E}^{\mathcal{F}_t}\big[\bar{B}^\top_{t+\delta }y_{t+\delta }+\bar{D}^\top_{t+\delta }z_{t+\delta }+\sum_{i=1}^{\infty} \bar{F}_{t+\delta }^{(i)\top} k^{(i)}_{t+\delta }+\bar{S}_{t+\delta}x_t\big]\Big)=0.
	\end{aligned}\right.
\end{equation}

It is clear that the Hamiltonian system \eqref{eq:4.4} is described by a FBSDELDA. Therefore, with the help of Theorem \ref{thm:3.1}, we can get the following theorem.
\begin{thm}\label{thm:4.1}
	Under Assumption \ref{ass:4.1}, the above Hamiltonian system \eqref{eq:4.4} admits a unique solution $(\theta(\cdot),u(\cdot))$. Moreover, $u(\cdot)$ is the unique optimal control of Problem(LQDL) and $x(\cdot)$ is its corresponding optimal trajectory.  	
\end{thm}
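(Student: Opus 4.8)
The plan is to establish the two assertions separately. First, the unique solvability of the Hamiltonian system \eqref{eq:4.4}: I would recast it as a FBSDELDA of the form \eqref{eq:1.1} and apply Theorem \ref{thm:3.1}. Second, the optimality and uniqueness of the control $u(\cdot)$ so obtained: I would use a completion-of-squares argument combined with Lemma \ref{lem:4.1}. Throughout, write $\mathcal{Q}_t:=Q_t+\mathbb{E}^{\mathcal{F}_t}[\bar{Q}_{t+\delta}]$, $\mathcal{R}_t:=R_t+\mathbb{E}^{\mathcal{F}_t}[\bar{R}_{t+\delta}]$ and $\mathcal{S}_t:=S_t+\mathbb{E}^{\mathcal{F}_t}[\bar{S}_{t+\delta}]$ for the relevant matrix-valued processes.

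\emph{Reduction and solvability.} By Assumption \ref{ass:4.1}-(iii), $\mathcal{R}_t$ is invertible with uniformly bounded inverse, so the algebraic (stationarity) relation in the last line of \eqref{eq:4.4} can be solved for $u_t$: writing $\Xi_t:=\mathcal{S}_tx_t+B_t^\top y_t+D_t^\top z_t+\sum_i F_t^{(i)\top}k_t^{(i)}+\mathbb{E}^{\mathcal{F}_t}[\bar{B}_{t+\delta}^\top y_{t+\delta}+\bar{D}_{t+\delta}^\top z_{t+\delta}+\sum_i\bar{F}_{t+\delta}^{(i)\top}k_{t+\delta}^{(i)}]$, one gets $u_t=-\mathcal{R}_t^{-1}\Xi_t$. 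Substituting this expression (and the analogous one for $u_{t-\delta}$, which contributes delayed and $\mathbb{E}^{\mathcal{G}_t}[\cdot]$ terms of the kind recorded in \eqref{eq:6.4}) into the forward and backward equations of \eqref{eq:4.4} turns it into a linear FBSDELDA with coefficients $(\Lambda,\Phi,\Gamma)$ where $\Phi(x)=Gx$. Assumption \ref{ass:3.1} is then immediate from the boundedness of all data of Problem (LQDL) and of $\mathcal{R}_t^{-1}$. For Assumption \ref{ass:3.2} I would be in Case~1 ($\mu>0$, $v=0$): take $\tilde m=m=n$, let $G$ be the terminal weight, and match $A,\bar{A},B,\bar{B},C,\bar{C},D,\bar{D}$ to the control-coefficient matrices of the state equation together with $\mathcal{S}_t$. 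Since $v=0$, the domination constraints on $\Phi$ and on $f$ are void, and those on $b,\sigma,g^{(i)}$ reduce (via Remark \ref{rmk:3.1}-(i)) to the trivial estimate $|(\text{bounded matrix})\,\mathcal{R}_t^{-1}\widehat{\Xi}_t|\le K|\widehat{\Xi}_t|$. The monotonicity condition \eqref{eq:3.2} is the heart of the matter: applying It\^{o}'s formula and using the time-shift identities together with the conventions $\bar{A}=\bar{B}=\bar{C}=\bar{D}=0$ on $[0,\delta]$ and $y=z=k=0$ on $(T,T+\delta]$, one finds that after substituting for $u$ the quantity $\int_0^T\langle\Gamma(\theta,\theta_-,\theta_+)-\Gamma(\bar{\theta},\bar{\theta}_-,\bar{\theta}_+),\widehat{\theta}\rangle\,dt$ collapses to $-\int_0^T(\langle(\mathcal{Q}_t-\mathcal{S}_t^\top\mathcal{R}_t^{-1}\mathcal{S}_t)\widehat{x}_t,\widehat{x}_t\rangle+\langle\mathcal{R}_t^{-1}\widehat{\Xi}_t,\widehat{\Xi}_t\rangle)\,dt$, which by Assumption \ref{ass:4.1}-(ii),(iv) and the uniform bound on $\mathcal{R}_t^{-1}$ is $\le-\mu\int_0^T|\widehat{\Xi}_t|^2\,dt$ for a suitable $\mu>0$; moreover $\langle\Phi(x)-\Phi(\bar{x}),\widehat{x}\rangle=\langle G\widehat{x},\widehat{x}\rangle\ge 0=v|G\widehat{x}|^2$ by Assumption \ref{ass:4.1}-(i). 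Hence Theorem \ref{thm:3.1} applies, and \eqref{eq:4.4} has a unique solution $(\theta(\cdot),u(\cdot))$.

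\emph{Optimality and uniqueness.} Fix the solution $(x,y,z,k,u)$ of \eqref{eq:4.4}, so $x=x^u$ is the candidate optimal trajectory. For any $v(\cdot)\in\mathcal{V}_{ad}$ with state $x^v(\cdot)$, put $\widehat{v}=v-u$ and let $x^{\widehat{v}}(\cdot)$ solve the homogeneous state equation (zero initial data) driven by $\widehat{v}$, so that $x^v=x+x^{\widehat{v}}$. Expanding the quadratic $J$ and applying It\^{o}'s formula to $\langle y_t,x^v_t-x_t\rangle$ (with the same delay/anticipation bookkeeping) gives $J(v)-J(u)=\frac{1}{2}\mathbb{E}[\int_0^T(\langle\mathcal{Q}_tx^{\widehat{v}}_t,x^{\widehat{v}}_t\rangle+\langle\mathcal{R}_t\widehat{v}_t,\widehat{v}_t\rangle+2\langle\mathcal{S}_tx^{\widehat{v}}_t,\widehat{v}_t\rangle)\,dt+\langle Gx^{\widehat{v}}_T,x^{\widehat{v}}_T\rangle]$, the first-order term vanishing because it equals $\mathbb{E}\int_0^T\langle\mathcal{R}_tu_t+\Xi_t,\widehat{v}_t\rangle\,dt$, which is zero by the stationarity relation in \eqref{eq:4.4}. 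Completing the square in $\widehat{v}$ with $\Theta_t:=-\mathcal{R}_t^{-1}\mathcal{S}_t$ (bounded and $\mathbb{F}$-adapted) yields $J(v)-J(u)=\frac{1}{2}\mathbb{E}[\int_0^T\langle\mathcal{R}_t(\widehat{v}_t-\Theta_tx^{\widehat{v}}_t),\widehat{v}_t-\Theta_tx^{\widehat{v}}_t\rangle\,dt+\int_0^T\langle(\mathcal{Q}_t-\mathcal{S}_t^\top\mathcal{R}_t^{-1}\mathcal{S}_t)x^{\widehat{v}}_t,x^{\widehat{v}}_t\rangle\,dt+\langle Gx^{\widehat{v}}_T,x^{\widehat{v}}_T\rangle]$. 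By Assumption \ref{ass:4.1}-(i),(ii),(iv) the last two integrals are nonnegative, and by Assumption \ref{ass:4.1}-(iii) the first is $\ge\frac{\delta}{2}\mathbb{E}\int_0^T|\widehat{v}_t-\Theta_tx^{\widehat{v}}_t|^2\,dt$; applying Lemma \ref{lem:4.1} with this $\Theta$ then gives $J(v)-J(u)\ge\frac{\delta\gamma}{2}\mathbb{E}\int_0^T|\widehat{v}_t|^2\,dt\ge 0$. Therefore $u(\cdot)$ minimizes $J$, and the strict inequality for $v\neq u$ shows it is the unique optimal control, with $x(\cdot)$ the corresponding optimal trajectory.

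\emph{Main obstacle.} The delicate step is the verification of the monotonicity condition \eqref{eq:3.2} for the reduced Hamiltonian system: one must choose the matrices in Assumption \ref{ass:3.2} correctly, keep track of all transposes and of every anticipated $\mathbb{E}^{\mathcal{F}_t}[\cdot_{t+\delta}]$ term and delayed/$\mathbb{E}^{\mathcal{G}_t}[\cdot]$ term generated by $u_t$ and $u_{t-\delta}$, and check that the monotone decrement reproduces exactly the convexity data of Assumption \ref{ass:4.1}. By contrast, the optimality half is a routine (though delay-adapted) completion-of-squares computation once Lemma \ref{lem:4.1} is in hand.
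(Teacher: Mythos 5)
Your proposal is correct and follows essentially the same route as the paper: solve the stationarity relation for $u_t=-\widetilde{R}_t^{-1}\Xi_t$, substitute to obtain a FBSDELDA, verify the domination-monotonicity conditions (Case 1, with the monotonicity integral collapsing to $-\langle\widetilde{R}^{-1}Q(\widehat{y},\widehat{z},\widehat{k}),Q(\widehat{y},\widehat{z},\widehat{k})\rangle-\langle(\widetilde{Q}-\widetilde{S}^{\top}\widetilde{R}^{-1}\widetilde{S})\widehat{x},\widehat{x}\rangle$), apply Theorem \ref{thm:3.1}, and then prove optimality by It\^{o}'s formula plus completion of squares and uniqueness via Lemma \ref{lem:4.1}. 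The only cosmetic difference is that you fold the coercivity estimate $J(v)-J(u)\ge\frac{\gamma}{2}\,\mathbb{E}\int_0^T\langle\widetilde{R}_t(v_t-u_t),v_t-u_t\rangle\,dt$ into the optimality step directly, whereas the paper first shows $J(v)-J(u)\ge 0$ and then invokes Lemma \ref{lem:4.1} separately for uniqueness; the content is the same.
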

\begin{proof}
Firstly, for simplicity, we denote by $\widetilde{R}_t=R_t+\mathbb{E}^{\mathcal{F}_t}[\bar{R}_{t+\delta }]$, $\widetilde{Q}_t=Q_t+\mathbb{E}^{\mathcal{F}_t}[\bar{Q}_{t+\delta }]$, $\widetilde{S}_t=S_t+\mathbb{E}^{\mathcal{F}_t}[\bar{S}_{t+\delta }]$.
 It is easy to find that $\widetilde{R}_t$ is  invertible, so we can solve $u(\cdot)$ from the last equation of Hamiltonian system \eqref{eq:4.4}.
\begin{equation}\label{eq:4.5}
	\begin{aligned}
	u_t=-&\widetilde{R}_t^{-1} \Big (B_t^\top y_t+D_t^\top z_t+\sum_{i=1}^{\infty} F_t^{(i)\top} k^{(i)}_t+\widetilde{S}_tx_t+\mathbb{E}^{\mathcal{F}_t}\big[\bar{B}^\top_{t+\delta }y_{t+\delta }+\bar{D}^\top_{t+\delta }z_{t+\delta }+\sum_{i=1}^{\infty} \bar{F}_{t+\delta }^{(i)\top} k^{(i)}_{t+\delta }\big]\Big).
	\end{aligned}
\end{equation}
Putting \eqref{eq:4.5} into the Hamiltonian system \eqref{eq:4.4}, we can get a FBSDELDA. Then we can easily verify that the coefficients of this FBSDELDA satisfy Assumption \ref{ass:3.1}, Assumption \ref{ass:3.2}-(i)-Case 1, and Assumption \ref{ass:3.2}-(ii). As for Assumption \ref{ass:3.2}-(iii), we give a detailed verification as follows:
\begin{equation}
	\begin{aligned}
	&\int_{0}^{T}\Big(\left \langle\Gamma (\theta ,\theta_{-},\theta_{+})-\Gamma (\bar{ \theta} ,\bar{ \theta }_{-},\bar{\theta}_{+}),\widehat{\theta}\right \rangle\Big)dt\\=&\int_{0}^{T}\Big(\left\langle A\widehat{x}+\bar{A}\widehat{x}_{-}+B\widehat{u}+\bar{B}\widehat{u}_{-},\widehat{y}\right\rangle+\left\langle C\widehat{x}+\bar{C}\widehat{x}_{-}+D\widehat{u}+\bar{D}\widehat{u}_{-},\widehat{z}\right\rangle+\left\langle E^{(i)}\widehat{x}+\bar{E}^{(i)}\widehat{x}_{-}+F^{(i)}\widehat{u}+\bar{F}^{(i)}\widehat{u}_{-},\widehat{k}^{(i)}\right\rangle
\\&-\left\langle A^\top\widehat{y}+C^\top\widehat{z}+\sum_{i=1}^{\infty}E^{(i)\top}\widehat{k}^{(i)}+Q\widehat{x}+S^\top\widehat{u}
+\mathbb{E}^{\mathcal{F}_t}\big[\bar{A}_{+}^\top\widehat{y}_{+}+\bar{C}_{+}^\top\widehat{z}_{+}
+\sum_{i=1}^{\infty}\bar{E}^{(i)\top}_{+}\widehat{k}^{(i)}_{+}+\bar{Q}_{+}\widehat{x}+\bar{S}_{+}^\top\widehat{u}\big],\widehat{x}\right\rangle\Big)dt\\
	=&\int_{0}^{T}\Big(\left\langle\widehat{u},B^\top\widehat{y}+D^\top\widehat{z}+\sum_{i=1}^{\infty}F^{(i)\top}\widehat{k}^{(i)}
-\widetilde{S}\widehat{x}+\mathbb{E}^{\mathcal{F}_t}\big[\bar{B}^\top_{+}\widehat{y}_{+}+\bar{D}^\top_{+}\widehat{z}_{+}
+\sum_{i=1}^{\infty}\bar{F}^{(i)\top}_{+}\widehat{k}^{(i)}_{+}\big]\right\rangle-\left\langle \widetilde{Q}\widehat{x},\widehat{x}\right\rangle\Big)dt\\
	=&\int_{0}^{T}\Big(-\left\langle\widetilde{R}^{-1}(Q(\widehat{y},\widehat{z},\widehat{k})+\widetilde{S}\widehat{x}),Q(\widehat{y},\widehat{z},\widehat{k})-\widetilde{S}\widehat{x}\right\rangle-\left\langle \widetilde{Q}\widehat{x},\widehat{x}\right\rangle\Big)dt\\
	=&\int_{0}^{T}\Big(-\left\langle\widetilde{R}^{-1}Q(\widehat{y},\widehat{z},\widehat{k}),Q(\widehat{y},\widehat{z},\widehat{k})\right\rangle-\left\langle(\widetilde{Q}-\widetilde{S}^{\top}\widetilde{R}^{-1}\widetilde{S})\widehat{x},\widehat{x}\right\rangle\Big)dt,
	\end{aligned}
\end{equation}
 where $Q(\widehat{y},\widehat{z},\widehat{k})$ is defined by \eqref{eq:6.4}. Then combining with Assummption \ref{ass:4.1}, we can finish the verification of Assumption \ref{ass:3.2}-(iii).
 Thus, by Theorem \ref{thm:3.1}, we know that this FBSDELDA admits a unique solution, which is equivalent to that the Hamiltonian system \eqref{eq:4.4} is uniquely solvable.

Next, we will prove the optimality of $u(\cdot)$ in the form of  \eqref{eq:4.5}. Firstly, let $v(\cdot)\in \mathcal{V}_{ad}$ be any other control and $x^{v}(\cdot)$ is the corresponding state. Then we shall explore the difference between $J\big(v(\cdot )\big)$ and $J\big(u(\cdot )\big)$:
	\begin{align}
		&\quad J\big(v(\cdot )\big)-J\big(u(\cdot )\big)\nonumber\\
		=&\frac{1}{2} \mathbb{E}\Bigg \{\int_{0}^{T} \Big ( \left \langle Q_tx^v_t,x^v_t \right \rangle -\left \langle Q_tx_t,x_t \right \rangle+ \left \langle \bar{Q}_tx^v_{t-\delta },x^v_{t-\delta } \right \rangle-\left \langle \bar{Q}_tx_{t-\delta },x_{t-\delta } \right \rangle  +\left \langle R_tv_t,v_t \right \rangle-\left \langle R_tu_t,u_t \right \rangle\nonumber\\
		&\qquad+\left \langle \bar{R}_tv_{t-\delta },v_{t-\delta } \right \rangle-\left \langle \bar{R}_tu_{t-\delta },u_{t-\delta } \right \rangle\Big )dt
		+\left \langle Gx^v_T,x^v_T \right \rangle-\left \langle Gx_T,x_T \right \rangle +2\left \langle S_tx^v_t,v_t \right \rangle-2\left \langle S_tx_t,u_t \right \rangle\nonumber\\&\qquad+2\left \langle \bar{S}_tx^v_{t-\delta},v_{t-\delta} \right \rangle-2\left \langle \bar{S}_tx_{t-\delta},u_{t-\delta} \right \rangle\Bigg \}\nonumber\\
		=&\frac{1}{2} \mathbb{E}\Bigg \{\int_{0}^{T} \Big (\left \langle Q_t(x^v_t-x_t),x^v_t-x_t \right \rangle +\left \langle \bar{Q}_t(x^v_{t-\delta }-x_{t-\delta }),x^v_{t-\delta }-x_{t-\delta } \right \rangle+\left \langle R_t(v_t-u_t), v_t-u_t\right \rangle\nonumber\\
		&\qquad+\left \langle \bar{R}_t(v_{t-\delta }-u_{t-\delta }),v_{t-\delta }-u_{t-\delta }  \right \rangle+2\left\langle S_t(x^v_t-x_t),v_t-u_t\right\rangle+2\left\langle \bar{S}_t(x^v_{t-\delta}-x_{t-\delta}),v_{t-\delta}-u_{t-\delta}\right\rangle\Big )dt\nonumber\\
		&\qquad+\left \langle G(x_T^v-x_T),x_T^v-x_T \right \rangle+ \int_{0}^{T}\Big (2\left \langle Q_tx_t,x_t^v-x_t \right \rangle  \nonumber+2\left \langle \bar{Q}_tx_{t-\delta },x^v_{t-\delta }-x_{t-\delta } \right \rangle+2\left \langle R_tu_t,v_t-u_t \right \rangle\nonumber\\
		&\qquad+2\left \langle \bar{R}_tu_{t-\delta },v_{t-\delta }-u_{t-\delta } \right \rangle +2\left \langle S_t(x^v_t-x_t),u_t \right \rangle+2\left \langle S_tx_t,v_t-u_t \right \rangle+2\left \langle \bar{S}_t(x^v_{t-\delta }-x_{t-\delta }),u_{t-\delta } \right \rangle\nonumber\\&\qquad+2\left \langle \bar{S}_tx_{t-\delta },v_{t-\delta }-u_{t-\delta } \right \rangle\Big )dt
		+ 2\left \langle Gx_T,x^v_T-x_T \right \rangle \Bigg \}.\nonumber
	\end{align}

Since $\bar{Q}=\bar{R}=\bar{S}=0$, when $t\in (T,T+\delta]$, by the time-shifting transformation and the initial conditions in \eqref{eq:4.4}, we have
\begin{equation}\label{eq:4.7}
	\begin{aligned}
		\quad &J\big(v(\cdot )\big)-J\big(u(\cdot )\big)\\
		=&\frac{1}{2} \mathbb{E}\Bigg \{\int_{0}^{T} \Big ( \left \langle \widetilde{Q}_t(x^v_t-x_t), x^v_t-x_t\right \rangle+ \left \langle \widetilde{R}_t(v_t-u_t),v_t-u_t \right \rangle \\&\qquad+2\left\langle\widetilde{S}_t(x^v_t-x_t),v_t-u_t\right\rangle+ \left \langle G(x_T^v-x_T),x_T^v-x_T \right \rangle  \Bigg \}+\Delta,
	\end{aligned}
\end{equation}
where
\begin{equation}
	\begin{aligned}
		\Delta =\mathbb{E}\Bigg \{\int_{0}^{T}&\Big (\left \langle \widetilde{Q}_tx_t , x^v_t-x_t \right \rangle +\left \langle \widetilde{R}_tu_t , v_t-u_t \right \rangle
		+\left\langle \widetilde{S}_t(x^v_t-x_t) ),u_t\right\rangle\\ &+\left\langle\widetilde{S}_tx_t,v_t-u_t\right\rangle \Big )dt+\left \langle Gx_T,x^v_T-x_T \right \rangle\Bigg \}.\nonumber
	\end{aligned}
\end{equation}
Applying It\^{o}'s formula to $\left \langle x^v_t-x_t,y_t \right \rangle $ leads to
\begin{equation}\label{eq:4.8}
	\begin{aligned}
		\mathbb{E}\Big [\left \langle x^v_T-x_T,y_T \right \rangle \Big ]
		=&\mathbb{E}\Bigg \{\int_{0}^{T}\bigg (\left \langle v_t-u_t,B_t^\top y_t+D_t^\top z_t+\sum_{i=1}^{\infty} F_t^{(i)\top} k^{(i)}_t+S_tx_t \right \rangle\\
		&\qquad+\left \langle v_{t-\delta }-u_{t-\delta },\bar{B}_t^\top y_t+\bar{D}_t^\top z_t+\sum_{i=1}^{\infty} \bar{F}_t^{(i)\top} k^{(i)}_t +\bar{S}_tx_{t-\delta}\right \rangle
		\\&\qquad-\left \langle \widetilde{Q}_tx_t,x_t^v-x_t \right \rangle-\left\langle\widetilde{S}_t(x^v_t-x_t),u_t\right\rangle\\&\qquad- \left\langle S_tx_t,v_t-u_t\right\rangle-\left\langle \bar{S}_tx_{t-\delta},v_{t-\delta}-u_{t-\delta}\right\rangle\bigg )dt\Bigg\}.
	\end{aligned}
\end{equation}
Moreover, due to the fact that $v_t=u_t$, $t\in [-\delta,0)$ and $y_t=z_t=k_t=0$, $t\in(T,T+\delta]$, \eqref{eq:4.8} can be rewritten as follows:
\begin{equation}
	\begin{aligned}
		&\mathbb{E}\Big [\left \langle x^v_T-x_T,Gx_T \right \rangle \Big ]
		\\=&\mathbb{E}\Bigg \{\int_{0}^{T}\bigg (\left\langle v_t-u_t,B_t^\top y_t+D_t^\top z_t+\sum_{i=1}^{\infty} F_t^{(i)\top} k^{(i)}_t+S_tx_t\right.  \\&\qquad+\left.\mathbb{E}^{\mathcal{F}_t}\big[\bar{B}^\top_{t+\delta }y_{t+\delta }+\bar{D}^\top_{t+\delta }z_{t+\delta }+\sum_{i=1}^{\infty} \bar{F}_{t+\delta }^{(i)\top} k^{(i)}_{t+\delta }+\bar{S}_{t+\delta}x_t\big]\right\rangle
		-\left \langle \widetilde{Q}_tx_t,x_t^v-x_t \right \rangle \\&\qquad-\left\langle\widetilde{S}_t(x^v_t-x_t),u_t\right\rangle- \left\langle\widetilde{S}_tx_t,v_t-u_t\right\rangle\bigg )dt\Bigg \}.
	\end{aligned}
\end{equation}
Consequently, combining with \eqref{eq:4.5}, we can infer that
$\Delta =0$. Thus, applying the nonnegative definiteness of $G$, we have
 	\begin{align}\label{eq:6.9}
 		\quad &J\big(v(\cdot )\big)-J\big(u(\cdot )\big)\nonumber\\
 		=&\frac{1}{2} \mathbb{E}\bigg \{\int_{0}^{T} \Big ( \left \langle \widetilde{Q}_t(x^v_t-x_t), x^v_t-x_t\right \rangle+ \left \langle \widetilde{R}_t(v_t-u_t),v_t-u_t \right \rangle \nonumber\\&\qquad+2\left\langle\widetilde{S}_t(x^v_t-x_t),v_t-u_t\right\rangle\Big)dt+ \left \langle G(x_T^v-x_T),x_T^v-x_T \right \rangle  \bigg \}
 		\nonumber\\\ge&\frac{1}{2} \mathbb{E}\bigg \{ \int_{0}^{T} \Big(\left \langle \widetilde{Q}_t(x^{v}_t-x_t), x^{v}_t-x_t\right \rangle +\left \langle \widetilde{R}_t(v _t-u_t),v _t-u_t\right\rangle\\&~~~~~~+2\left\langle\widetilde{R}_t^{-1}\widetilde{S}_t(x^{v}_t-x_t),\widetilde{R}_t(v_t-u_t)\right\rangle\Big)dt\bigg \}\nonumber\\
 		=&\frac{1}{2} \mathbb{E}\bigg \{ \int_{0}^{T} \Big(\left \langle (\widetilde{Q}_t -\widetilde{S}_t^{\top}\widetilde{R}_t^{-1}\widetilde{S}_t)(x^{v }_t-x_t), x^{v}_t-x_t\right \rangle\nonumber\\&~~~~~~+ \left\langle\widetilde{R}_t\big[(v_t-u_t)+\widetilde{R}_t^{-1}\widetilde{S}_t(x^{v}_t-x_t)\big],(v_t-u_t)+\widetilde{R}_t^{-1}\widetilde{S}_t(x^{v}_t-x_t)\right\rangle\Big)dt\bigg \}.\nonumber
 	\end{align}
By virtue of Assumption \ref{ass:4.1}-(iii),(iv), it is easy to verify that $J\big(v(\cdot )\big)-J\big(u(\cdot )\big)\ge 0$. This is to say that $u(\cdot)$ is the optimal control of Problem(LQDL).

For the uniqueness, if there exists another optimal control $\bar{u}(\cdot)$ and its corresponding state is $x^{\bar{u}}(\cdot)$. Thus, $J\big(\bar{u}(\cdot )\big)=J\big(u(\cdot )\big)$. Coming back to \eqref{eq:6.9} and combining with Assumption \ref{ass:4.1} and Lemma \ref{lem:4.1}, we derive
\begin{equation}\label{eq:4.10}
	\begin{aligned}
		0=&J\big(\bar{u}(\cdot )\big)-J\big(u(\cdot )\big)\\
		 \ge&\frac{1}{2} \mathbb{E}\bigg \{ \int_{0}^{T} \Big(\left \langle (\widetilde{Q}_t -\widetilde{S}_t^{\top}\widetilde{R}_t^{-1}\widetilde{S}_t)(x^{\bar{u} }_t-x_t), x^{\bar{u} }_t-x_t\right \rangle\\&+ \left\langle\widetilde{R}_t\big[(\bar{u}_t-u_t)+\widetilde{R}_t^{-1}\widetilde{S}_t(x^{\bar{u}}_t-x_t)\big],(\bar{u}_t-u_t)+\widetilde{R}_t^{-1}\widetilde{S}_t(x^{\bar{u}}_t-x_t)\right\rangle\Big)dt\bigg \}\\
		 \ge&\frac{1}{2} \mathbb{E}\Biggl\{\int_{0}^{T}\gamma\left\langle\widetilde{R}_t(\bar{u}_t-u_t),\bar{u}_t-u_t\right\rangle dt\Biggr\}.
	\end{aligned}
\end{equation}
Due to the nonnegative definiteness of $\widetilde{R}_t$ and the fact that $\gamma>0$, \eqref{eq:4.10} implies that $\bar{u}(\cdot)=u(\cdot)$. The proof of uniqueness is completed.    	
\end{proof}
\subsection{Backward LQ stochastic control problem}\label{sec:4.2}
In this section, the control system is given by a linear ABSDEL:
\begin{equation}\label{eq:4.11}
	\left\{
	\begin{aligned}
		dy_t=&\Big (A_ty_t+\bar{A}_{t }\mathbb{E}^{\mathcal{F}_t}[y_{t+\delta }]+B_tz_t+\bar{B}_{t }\mathbb{E}^{\mathcal{F}_t}[z_{t+\delta }]+\sum_{i=1}^{\infty } C^{(i)}_tk^{(i)}_t+\sum_{i=1}^{\infty } \bar{C}^{(i)}_{t }\mathbb{E}^{\mathcal{F}_t}[k^{(i)}_{t+\delta }]
		+D_tv_t+\bar{D}_{t}v_{t-\delta }\Big )dt\\&+z_tdW_t+\sum_{i=1}^{\infty }k_t^{(i)}dH_t^{(i)},\quad t\in[0,T], \\
		y_T=&b,\quad y_t= z_t= k_t=0,\quad t\in (T,T+\delta ],\\
		v_t=&\iota_t, \quad t\in[-\delta,0),		
	\end{aligned}\right .
\end{equation}
where $b\in L^2_{\mathcal{F}_T}(\Omega;\mathbb{R}^n)$ and $\iota_t\in C(-\delta,0;\mathbb{R}^m)$. Moreover, $A_t,\bar{A}_t,B_t,\bar{B}_t,C^{(i)}_t,\bar{C}^{(i)}_t\in L^{\infty}_{\mathbb{F}}(0,T;\mathbb{R}^{n\times n})$, $D_t\in L^{\infty}_{\mathbb{F}}(0,T;\mathbb{R}^{n\times m})$, $\bar{D}_t\in L^{\infty}_{\mathbb{G}}(0,T;\mathbb{R}^{n\times m})$, where $\bar{D}_t=0$, when $t\in[0,\delta]$. Let $\mathcal{U}_{ad}$ denote the set of stochastic processes $v(\cdot)$ with the form:
\begin{equation}
	\left\{\begin{aligned}
		&v_t=\iota_t,\quad t \in [-\delta,0),\\
		&v_t=v_t\in  L^{2} _{\mathbb{F}}(0,T;\mathbb{R}^{m} ),\quad t\in [0,T],\nonumber
	\end{aligned}\right.
\end{equation}
which is called an addmissible control.

Similar to before, we give a cost functional as follows:
\begin{equation}\label{eq:4.12}
	\begin{aligned}
		J\big(v(\cdot )\big)=\frac{1}{2} \mathbb{E}\bigg \{&\int_{0}^{T} \Big ( \left \langle Q_ty_t,y_t \right \rangle + \left \langle \bar{Q}_{t }y_{t+\delta },y_{t+\delta } \right \rangle+\left \langle L_tz_t,z_t \right \rangle + \left \langle \bar{L}_{t }z_{t+\delta },z_{t+\delta } \right \rangle
		+\sum_{i=1}^{\infty}\left \langle G^{(i)}_tk^{(i)}_t,k^{(i)}_t \right \rangle \\&+ \sum_{i=1}^{\infty}\left \langle \bar{G}^{(i)}_{t }k^{(i)}_{t+\delta },k^{(i)}_{t+\delta } \right \rangle+\left \langle R_tv_t,v_t \right \rangle
		+\left \langle \bar{R}_{t }v_{t-\delta },v_{t-\delta } \right \rangle\Big )dt
		+\left \langle My_0,y_0 \right \rangle \bigg \},
	\end{aligned}
\end{equation}
where $Q_t, \bar{Q}_t, L_t, \bar{L}_t, G^{(i)}_t, \bar{G}^{(i)}_t \in L^{\infty}_{\mathbb{F}}(0,T;\mathbb{S}^{n})$, $R_t, \bar{R}_t \in L^{\infty}_{\mathbb{F}}(0,T;\mathbb{S}^{m})$ and $M$ is an $\mathcal{F}$-measurable $n\times n$ symmetric and nonnegative definite matrix-valued bounded random variable. Moreover, $\bar{Q}_t=\bar{L}_t=\bar{G}_t=0$, when $t\in [-\delta,0)$ and $\bar{R}_t=0$, when $t\in(T,T+\delta]$. Similarly, we assume that $Q_t+\bar{Q}_{t-\delta}$, $L_t+\bar{L}_{t-\delta}$, $G_t+\bar{G}_{t-\delta}$ are all nonnegative definite and $R_t+\mathbb{E}^{\mathcal{F}_t}[\bar{R}_{t+\delta}]$ has the similar condition as the $R_t+\mathbb{E}^{\mathcal{F}_t}[\bar{R}_{t+\delta}]$ in subsection \ref{sec:4.1}.

Now, our problem in this section is proposed as follows:\\
\textbf{Problem(LQAL).} Find an admissible control $u(\cdot)\in \mathcal{U}_{ad}$ such that
\begin{equation}
	J\big(u(\cdot )\big)=\underset{v(\cdot )\in \mathcal{U}_{ad} }{\textrm{inf}}J\big(v(\cdot )\big) .
\end{equation}
Likewise, we give the Hamiltonian system of ABSDEL \eqref{eq:4.11} as follows:
\begin{equation}\label{eq:4.14}
	\left \{\begin{aligned}
		dx_t=&-\Big (A_t^{\top} x_t+\bar{A}_{t-\delta }^{\top }x_{t-\delta } +Q_t^{\top} y_t+\bar{Q}_{t-\delta }^{\top }y_{t }\Big )dt
		-\Big (B_t^{\top} x_t+\bar{B}_{t-\delta }^{\top }x_{t-\delta } +L_t^{\top} z_t+\bar{L}_{t-\delta }^{\top }z_{t }\Big )dW_t\\
		&-\Big (\sum_{i=1}^{\infty } (C_t^{(i)\top} x_{t-}+\bar{C}_{t-\delta }^{(i)\top }x_{(t-\delta)-} +G_t^{(i)\top} k^{(i)}_t+\bar{G}_{t-\delta }^{(i)\top }k^{(i)}_{t }\Big )dH_t^{(i)},\quad t\in[0,T],\\
		dy_t=&\Big (A_ty_t+\bar{A}_{t }\mathbb{E}^{\mathcal{F}_t}[y_{t+\delta }]+B_tz_t+\bar{B}_{t }\mathbb{E}^{\mathcal{F}_t}[z_{t+\delta }]+\sum_{i=1}^{\infty } C^{(i)}_tk^{(i)}_t+\sum_{i=1}^{\infty } \bar{C}^{(i)}_{t }\mathbb{E}^{\mathcal{F}_t}[k^{(i)}_{t+\delta }]
		+D_tu_t+\bar{D}_{t}u_{t-\delta }\Big )dt\\&+z_tdW_t+\sum_{i=1}^{\infty }k_t^{(i)}dH_t^{(i)},\quad t\in[0,T],\\
		x_0=&-My_0,\quad x_t=0,\quad u_t=\iota_t,\quad t\in [-\delta ,0),\\
		y_T=&b,\quad x_t=y_t=z_t=k_t=0 , \quad t\in (T,T+\delta ],\\
		(D_t ^\top &x_t+\mathbb{E}^{\mathcal{F}_t}[\bar{D}_{t+\delta } ^\top x_{t+\delta}])+(R_t+\mathbb{E}^{\mathcal{F}_t}[\bar{R}_{t+\delta }])u_t=0.
	\end{aligned}\right .
\end{equation}

Obviously,  the Hamiltonian system \eqref{eq:4.14} is also driven by a FBSDELDA which satisfies all the assumptions in section \ref{sec:3}, so by Theorem \ref{thm:3.1}, we have the following theorem.
\begin{thm}\label{thm:4.2}
	The Hamiltonian system \eqref{eq:4.14} admits a unique solution $(\theta(\cdot),u(\cdot))$. Moreover, $u(\cdot)$ is the unique optimal control of Problem(LQAL).
\end{thm}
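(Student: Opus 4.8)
The plan is to run the same three-stage argument used for Theorem~\ref{thm:4.1}: reduce the Hamiltonian system \eqref{eq:4.14} to a FBSDELDA covered by Theorem~\ref{thm:3.1} (this gives unique solvability), then prove optimality of the associated control by completing the square, and finally read off uniqueness of the optimal control. I would start from the last (stationarity) relation in \eqref{eq:4.14}: writing $\widetilde{R}_t:=R_t+\mathbb{E}^{\mathcal{F}_t}[\bar{R}_{t+\delta}]$, the standing hypothesis $\langle\widetilde{R}_t\xi,\xi\rangle\ge\delta|\xi|^2$ makes $\widetilde{R}_t$ boundedly invertible, so that relation solves for the control as
\begin{equation}
u_t=-\widetilde{R}_t^{-1}\big(D_t^{\top}x_t+\mathbb{E}^{\mathcal{F}_t}[\bar{D}_{t+\delta}^{\top}x_{t+\delta}]\big).\nonumber
\end{equation}

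Substituting this back turns \eqref{eq:4.14} into a linear FBSDELDA of the type \eqref{eq:1.1}, whose coefficients I would check against Assumption~\ref{ass:3.1} (Lipschitz continuity and integrability, immediate from the boundedness of all the data and of $\widetilde{R}^{-1}$) and Assumption~\ref{ass:3.2}. For the latter only the $v$-part is active (Case~2, the $\mu$-terms being absent, with $A=D^{\top}$, $\bar{A}=\bar{D}^{\top}$), and the requirement on the endpoint coupling is supplied by $M\ge0$ through the symmetric form \eqref{eq:3.3} with $G$ built from $M^{1/2}$. The computation that needs care is the monotonicity inequality: expanding $\int_0^T\langle\Gamma(\theta,\theta_{-},\theta_{+})-\Gamma(\bar{\theta},\bar{\theta}_{-},\bar{\theta}_{+}),\widehat{\theta}\rangle\,dt$, the antisymmetric ``Hamiltonian'' cross terms pairing $\widehat{x}$ against $(\widehat{y},\widehat{z},\widehat{k})$ cancel after time-shifting and the a.e.\ identification $\widehat{x}_{t-}=\widehat{x}_t$, and after inserting the formula for $u$ there remains, with $\Pi(\widehat{x})_t:=D_t^{\top}\widehat{x}_t+\mathbb{E}^{\mathcal{F}_t}[\bar{D}_{t+\delta}^{\top}\widehat{x}_{t+\delta}]$,
\begin{equation}
-\int_0^T\Big(\langle\widetilde{R}_t^{-1}\Pi(\widehat{x})_t,\Pi(\widehat{x})_t\rangle+\langle(Q_t+\bar{Q}_{t-\delta})\widehat{y},\widehat{y}\rangle+\langle(L_t+\bar{L}_{t-\delta})\widehat{z},\widehat{z}\rangle+\sum_{i=1}^{\infty}\langle(G_t^{(i)}+\bar{G}_{t-\delta}^{(i)})\widehat{k}^{(i)},\widehat{k}^{(i)}\rangle\Big)dt.\nonumber
\end{equation}
Since $\widetilde{R}^{-1}$ is bounded below and $Q_t+\bar{Q}_{t-\delta}$, $L_t+\bar{L}_{t-\delta}$, $G_t^{(i)}+\bar{G}_{t-\delta}^{(i)}$ are nonnegative definite, this has a definite sign and controls $v\int_0^T|\Pi(\widehat{x})_t|^2\,dt$ for a suitable $v>0$, which is exactly what Assumption~\ref{ass:3.2} (equivalently \eqref{eq:3.3}) requires. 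Theorem~\ref{thm:3.1} then yields a unique $\theta(\cdot)\in N_{\mathbb{F}}^2(0,T;\mathbb{R}^{n(2+d)}\times l^2(\mathbb{R}^n))$, hence a unique solution $(\theta(\cdot),u(\cdot))$ of \eqref{eq:4.14}.

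For optimality, I would take any $v(\cdot)\in\mathcal{U}_{ad}$ with state $(y^v,z^v,k^v)$ (well defined by Lemma~\ref{lem:2.3}) and set $\widehat{y}=y^v-y$, etc., $\widehat{v}=v-u$, noting $\widehat{y}_T=0$, $\widehat{v}\equiv0$ on $[-\delta,0)$ and $\widehat{y}=\widehat{z}=\widehat{k}=0$ on $(T,T+\delta]$. Expanding $J(v)-J(u)$ via $\langle A\xi,\xi\rangle-\langle A\bar{\xi},\bar{\xi}\rangle=\langle A\widehat{\xi},\widehat{\xi}\rangle+2\langle A\bar{\xi},\widehat{\xi}\rangle$ and time-shifting the $\bar{Q},\bar{L},\bar{G},\bar{R}$ terms gives
\begin{equation}
J(v)-J(u)=\tfrac12\mathbb{E}\Big[\int_0^T\big(\langle\widetilde{Q}_t\widehat{y},\widehat{y}\rangle+\langle\widetilde{L}_t\widehat{z},\widehat{z}\rangle+\sum_{i=1}^{\infty}\langle\widetilde{G}_t^{(i)}\widehat{k}^{(i)},\widehat{k}^{(i)}\rangle+\langle\widetilde{R}_t\widehat{v},\widehat{v}\rangle\big)dt+\langle M\widehat{y}_0,\widehat{y}_0\rangle\Big]+\Delta,\nonumber
\end{equation}
with $\widetilde{Q}_t=Q_t+\bar{Q}_{t-\delta}$ etc.\ and $\Delta$ collecting the terms linear in the hatted quantities. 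Applying It\^{o}'s formula to $\langle x_t,\widehat{y}_t\rangle$ and using the two equations of \eqref{eq:4.14}, the boundary data, a time-shift, and finally the stationarity relation — exactly the calculation by which $\Delta=0$ is obtained in Theorem~\ref{thm:4.1} — shows $\Delta=0$. Since $M\ge0$, $\widetilde{Q}_t,\widetilde{L}_t,\widetilde{G}_t^{(i)}\ge0$ and $\langle\widetilde{R}_t\widehat{v},\widehat{v}\rangle\ge\delta|\widehat{v}|^2$, it follows that $J(v)-J(u)\ge\tfrac{\delta}{2}\mathbb{E}\int_0^T|\widehat{v}|^2\,dt\ge0$; hence $u(\cdot)$ is optimal, and if $v(\cdot)$ is also optimal then $\widehat{v}=0$ a.e., i.e.\ $v=u$. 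Note that the cost \eqref{eq:4.12} has no state--control cross term, so — unlike in Problem(LQDL) — no analogue of Lemma~\ref{lem:4.1} is needed for uniqueness.

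The main obstacle is the monotonicity bookkeeping in the first stage: one must track precisely the delayed arguments $x_{t-\delta}$, the advanced arguments $\mathbb{E}^{\mathcal{F}_t}[y_{t+\delta}]$, the nested conditional expectations generated by $u$ and $u_{t-\delta}$, and all the time-shifts, so that every antisymmetric cross term cancels and the surviving quadratic form has exactly the sign and shape demanded by Assumption~\ref{ass:3.2}. The identity $\Delta=0$ in the optimality stage is essentially the same calculation, so once the first stage is carried out carefully the remainder is routine.
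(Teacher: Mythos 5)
Your proposal follows the paper's own (very terse) proof exactly: the paper likewise solves the stationarity relation for $u$ via the invertibility of $R_t+\mathbb{E}^{\mathcal{F}_t}[\bar{R}_{t+\delta}]$, asserts that the resulting FBSDELDA satisfies the Section \ref{sec:3} assumptions in Case 2 ($\mu=0$, $v>0$) so that Theorem \ref{thm:3.1} yields unique solvability, and then obtains optimality and uniqueness by expanding $J(v(\cdot))-J(u(\cdot))$ and applying It\^{o}'s formula to $\langle x_t, y^v_t-y_t\rangle$, omitting all details by analogy with Theorem \ref{thm:4.1}. One minor labeling slip: by Remark \ref{rmk:4.1} the nonnegative-definite data of Problem(LQAL) place the Hamiltonian system under the original monotonicity condition \eqref{eq:3.2} rather than its symmetric variant \eqref{eq:3.3} (which corresponds to the nonpositive-definite case); otherwise your computation of the surviving quadratic form and your observation that no analogue of Lemma \ref{lem:4.1} is needed (the cost \eqref{eq:4.12} has no state--control cross terms) are correct and consistent with the paper.
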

Similar to the proof of Theorem \ref{thm:4.1}, we will still explore the difference between $J\big(v(\cdot )\big)$ and $J\big(u(\cdot )\big)$, and then apply It\^{o}'s formula to $\left \langle x_t,y^v_t-y_t \right \rangle $. Due to the similarity between the proof of the two, we omit the detailed proof of the latter.
\begin{rmk}\label{rmk:4.1}
	(i) In fact, in subsection \ref{sec:4.1}, the coefficients of the FBSDELDA in Hamiltonain system \eqref{eq:4.4} are verified to satisfy Assumption \ref{ass:3.1} and Assumption \ref{ass:3.2}-(i)-case 1, Assumption \ref{ass:3.2}-(ii), Assumption \ref{ass:3.2}-(iii), whereas Assumption \ref{ass:3.2}-(i)-case 2 corresponds to backward LQ stochastic control problem; (ii) If we replace the positive definiteness and nonnegative definiteness of the coefficients of the cost functional in \eqref{eq:4.2} and \eqref{eq:4.12} with the negative definiteness and non-positive definiteness, then the coefficients of FBSDELDAs in Hamiltonain system \eqref{eq:4.4} and \eqref{eq:4.14} satisfy Assumption \ref{ass:3.1}, Assumption \ref{ass:3.2}-(i), (ii) and the symmetrical version of Assumption \ref{ass:3.2}-(iii) in Remark \ref{rmk:3.1}. In this situation, the problem of minimize the cost functional will become maximizing it. In summary, the domination-monotonicity conditions exactly correspond to four classes of optimal control problems.
\end{rmk}

\section*{Acknowledgments }

The authors would like to thank anonymous referees for helpful comments and suggestions which
improved the original version of the paper. Q. Meng was supported by the Key Projects of Natural
Science Foundation of Zhejiang Province (No. Z22A013952) and the National Natural Science
Foundation of China ( No.12271158 and No. 11871121).  Maoning Tang was supported by the Natural Science
Foundation of Zhejiang Province (No. LY21A010001)


\begin{thebibliography}{10}

\bibitem{agram2014infinite}
Nacira Agram and Bernt {\O}ksendal.
\newblock Infinite horizon optimal control of forward--backward stochastic
  differential equations with delay.
\newblock {\em Journal of Computational and Applied Mathematics}, 259:336--349,
2014

\bibitem{antonelli1993backward}
Fabio Antonelli.
\newblock Backward-forward stochastic differential equations.
\newblock {\em The Annals of Applied Probability}, pages 777--793, 1993.

\bibitem{bahlali2003bsde}
Khaled Bahlali, M’hamed Eddahbi, and E~Essaky.
\newblock BSDE associated with L{\'e}vy processes and application to PDIE.
\newblock {\em Journal of Applied Mathematics and Stochastic Analysis},
  16(1):1--17, 2003.

\bibitem{chen2010maximum}
Li~Chen and Zhen Wu.
\newblock Maximum principle for the stochastic optimal control problem with
  delay and application.
\newblock {\em Automatica}, 46(6):1074--1080, 2010.

\bibitem{chen2011type}
Li~Chen and Zhen Wu.
\newblock A type of general forward-backward stochastic differential equations
  and applications.
\newblock {\em Chinese Annals of Mathematics, Series B}, 32(2):279--292, 2011.

\bibitem{chen2012dynamic}
Li~Chen and Zhen Wu.
\newblock Dynamic programming principle for stochastic recursive optimal
  control problem with delayed systems.
\newblock {\em ESAIM: Control, Optimisation and Calculus of Variations},
  18(4):1005--1026, 2012.

\bibitem{chen2012delayed}
Li~Chen, Zhen Wu, and Zhiyong Yu.
\newblock Delayed stochastic linear-quadratic control problem and related
  applications.
\newblock {\em Journal of Applied Mathematics}, 2012, 2012.

\bibitem{delarue2002existence}
Francois Delarue.
\newblock On the existence and uniqueness of solutions to FBSDEs in a
  non-degenerate case.
\newblock {\em Stochastic processes and their applications}, 99(2):209--286,
2002

\bibitem{el1997backward}
Nicole El~Karoui, Shige Peng, and Marie~Claire Quenez.
\newblock Backward stochastic differential equations in finance.
\newblock {\em Mathematical finance}, 7(1):1--71, 1997.

\bibitem{hu2009stochastic}
Lanying Hu and Yong Ren.
\newblock Stochastic PDIEs with nonlinear neumann boundary conditions and
  generalized backward doubly stochastic differential equations driven by
  L{\'e}vy processes.
\newblock {\em Journal of Computational and Applied Mathematics},
  229(1):230--239, 2009.

\bibitem{hu1995solution}
Ying Hu and Shige Peng.
\newblock Solution of forward-backward stochastic differential equations.
\newblock {\em Probability Theory and Related Fields}, 103(2):273--283, 1995.

\bibitem{huang2012maximum}
Jianhui Huang and Jingtao Shi.
\newblock Maximum principle for optimal control of fully coupled
  forward-backward stochastic differential delayed equations.
\newblock {\em ESAIM: Control, optimisation and calculus of variations},
  18(4):1073--1096, 2012.

\bibitem{li2017linear}
Na~Li, Guangchen Wang, and Zhen Wu.
\newblock Linear-quadratic optimal control problem of forward-backward
  stochastic system with delay.
\newblock In {\em 2017 36th Chinese Control Conference (CCC)}, pages
  1822--1827. IEEE, 2017.

\bibitem{li2018indefinite}
Na~Li, Yuan Wang, and Zhen Wu.
\newblock An indefinite stochastic linear quadratic optimal control problem
  with delay and related forward--backward stochastic differential equations.
\newblock {\em Journal of Optimization Theory and Applications},
  179(2):722--744, 2018.

\bibitem{li2014maximum}
Na~Li and Zhen Wu.
\newblock Maximum principle for anticipated recursive stochastic optimal
  control problem with delay and L{\'e}vy processes.
\newblock {\em Applied Mathematics-A Journal of Chinese Universities},
  29(1):67--85, 2014.

\bibitem{li2016stochastic}
Na~Li and Zhen Wu.
\newblock Stochastic linear-quadratic optimal control problems with delay and
  L{\'e}vy processes.
\newblock In {\em 2016 35th Chinese Control Conference (CCC)}, pages
  1758--1763. IEEE, 2016.

\bibitem{li2018forward}
Na~Li and Zhiyong Yu.
\newblock Forward-backward stochastic differential equations and
  linear-quadratic generalized stackelberg games.
\newblock {\em SIAM Journal on Control and Optimization}, 56(6):4148--4180,
2018

\bibitem{ma1994solving}
Jin Ma, Philip Protter, and Jiongmin Yong.
\newblock Solving forward-backward stochastic differential equations
  explicitly—a four step scheme.
\newblock {\em Probability theory and related fields}, 98(3):339--359, 1994.

\bibitem{ma2015well}
Jin Ma, Zhen Wu, Detao Zhang, and Jianfeng Zhang.
\newblock On well-posedness of forward--backward SDEs—a unified approach.
\newblock {\em The Annals of Applied Probability}, 25(4):2168--2214, 2015.

\bibitem{meng2015optimal}
Qingxin Meng and Yang Shen.
\newblock Optimal control of mean-field jump-diffusion systems with delay: A
  stochastic maximum principle approach.
\newblock {\em Journal of computational and applied mathematics}, 279:13--30,
2015

\bibitem{meng2009necessary}
QingXin Meng and MaoNing Tang.
\newblock Necessary and sufficient conditions for optimal control of stochastic
  systems associated with L{\'e}vy processes.
\newblock {\em Science in China Series F: Information Sciences},
  52(11):1982--1992, 2009.

\bibitem{meng2021global}
Weijun Meng and Jingtao Shi.
\newblock A global maximum principle for stochastic optimal control problems
  with delay and applications.
\newblock {\em Systems \& Control Letters}, 150:104909, 2021.

\bibitem{meng2023general}
Weijun Meng, Jingtao Shi, Tianxiao Wang, and Ji-Feng Zhang.
\newblock A general maximum principle for optimal control of stochastic
  differential delay systems.
\newblock {\em arXiv preprint arXiv:2302.03339}, 2023.

\bibitem{nualart2000chaotic}
David Nualart and Wim Schoutens.
\newblock Chaotic and predictable representations for L{\'e}vy processes.
\newblock {\em Stochastic processes and their applications}, 90(1):109--122,
2000

\bibitem{nualart2001backward}
David Nualart and Wim Schoutens.
\newblock Backward stochastic differential equations and feynman-kac formula
  for L{\'e}vy processes, with applications in finance.
\newblock {\em Bernoulli}, pages 761--776, 2001.

\bibitem{oksendal2000maximum}
Bernt {\O}ksendal and Agnes Sulem.
\newblock A maximum principle for optimal control of stochastic systems with
  delay, with applications to finance.
\newblock {\em Preprint series. Pure mathematics http://urn. nb. no/URN: NBN:
  no-8076}, 2000.

\bibitem{pardoux1990adapted}
Etienne Pardoux and Shige Peng.
\newblock Adapted solution of a backward stochastic differential equation.
\newblock {\em Systems \& control letters}, 14(1):55--61, 1990.

\bibitem{pardoux1999forward}
Etienne Pardoux and Shanjian Tang.
\newblock Forward-backward stochastic differential equations and quasilinear
  parabolic pdes.
\newblock {\em Probability Theory and Related Fields}, 114(2):123--150, 1999.

\bibitem{peng1999fully}
Shige Peng and Zhen Wu.
\newblock Fully coupled forward-backward stochastic differential equations and
  applications to optimal control.
\newblock {\em SIAM Journal on Control and Optimization}, 37(3):825--843, 1999.

\bibitem{peng2009anticipated}
Shige Peng and Zhe Yang.
\newblock Anticipated backward stochastic differential equations.
\newblock {\em The Annals of Probability}, 37(3):877--902, 2009.

\bibitem{ren2010generalized}
Yong Ren and Mohamed El~Otmani.
\newblock Generalized reflected bsdes driven by a L{\'e}vy process and an
  obstacle problem for pdies with a nonlinear neumann boundary condition.
\newblock {\em Journal of computational and applied mathematics},
  233(8):2027--2043, 2010.

\bibitem{sun2016open}
Jingrui Sun, Xun Li, and Jiongmin Yong.
\newblock Open-loop and closed-loop solvabilities for stochastic linear
  quadratic optimal control problems.
\newblock {\em SIAM Journal on Control and Optimization}, 54(5):2274--2308,
2016

\bibitem{sun2016maximum}
Zhongyang Sun.
\newblock Maximum principle for forward--backward stochastic control system
  under G-expectation and relation to dynamic programming.
\newblock {\em Journal of Computational and Applied Mathematics}, 296:753--775,
2016

\bibitem{tang2012optimal}
MaoNing Tang and Qi~Zhang.
\newblock Optimal variational principle for backward stochastic control systems
  associated with L{\'e}vy processes.
\newblock {\em Science China Mathematics}, 55(4):745--761, 2012.

\bibitem{tian2023mean}
Ran Tian and Zhiyong Yu.
\newblock Mean-field type FBSDEs under domination-monotonicity conditions and
  application to LQ problems.
\newblock {\em SIAM Journal on Control and Optimization}, 61(1):22--46, 2023.

\bibitem{wei2021infinite}
Qingmeng Wei and Zhiyong Yu.
\newblock Infinite horizon forward-backward SDEs and open-loop optimal controls
  for stochastic linear-quadratic problems with random coefficients.
\newblock {\em SIAM Journal on Control and Optimization}, 59(4):2594--2623,
2021

\bibitem{yang2022fbsdes}
Xueyang Yang and Zhiyong Yu.
\newblock FBSDEs involving time delays and advancements on infinite horizon and
  LQ problems with delays.
\newblock {\em Systems \& Control Letters}, 161:105149, 2022.

\bibitem{yong1997finding}
Jiongmin Yong.
\newblock Finding adapted solutions of forward--backward stochastic
  differential equations: method of continuation.
\newblock {\em Probability Theory and Related Fields}, 107(4):537--572, 1997.

\bibitem{yu2012stochastic}
Zhiyong Yu.
\newblock The stochastic maximum principle for optimal control problems of
  delay systems involving continuous and impulse controls.
\newblock {\em Automatica}, 48(10):2420--2432, 2012.

\bibitem{yu2022forward}
Zhiyong Yu.
\newblock On forward--backward stochastic differential equations in a
  domination-monotonicity framework.
\newblock {\em Applied Mathematics \& Optimization}, 85(1):1--46, 2022.

\bibitem{zhang2010equivalence}
Feng Zhang.
\newblock The equivalence between uniqueness and continuous dependence of
  solutions for FBSDEs with continuous monotone coefficients.
\newblock {\em Journal of computational and applied mathematics},
  233(8):1980--1986, 2010.

\bibitem{zhang2014stochastic}
Fu~Zhang, Maoning Tang, and Qingxin Meng.
\newblock Stochastic maximum principle for forward-backward stochastic systems
  associated with L{\'e}vy processes.
\newblock {\em Chinese Journal of Contemporary Mathematics}, 35(1):67--67,
2014

\bibitem{zhou2011solutions}
Qing Zhou, Yong Ren, and Weixing Wu.
\newblock On solutions to backward stochastic partial differential equations
  for L{\'e}vy processes.
\newblock {\em Journal of computational and applied mathematics},
  235(18):5411--5421, 2011.

\end{thebibliography}
\end{document}